\documentclass[11pt]{article}
\usepackage[utf8]{inputenc}
\usepackage{amsfonts, amsmath}
\usepackage{amssymb}
\usepackage{amsthm,amsmath,amscd}
\usepackage{enumerate}
\usepackage{url}
\usepackage{amsbsy}
\usepackage[pdftex]{graphicx}
\usepackage[export]{adjustbox}
\usepackage[margin=25pt,font=small,labelfont=bf]{caption}
\usepackage[breaklinks,hyperfootnotes=false,bookmarks=false,colorlinks=true]{hyperref}
\usepackage{subfig}
\usepackage{mdframed}
\usepackage{color}
\usepackage[square,numbers,sort&compress]{natbib}
\setcounter{secnumdepth}{2}
\setcounter{tocdepth}{2}
\newcommand*{\doi}[1]{doi: \href{https://dx.doi.org/#1}{\urlstyle{rm}\nolinkurl{#1}}}
\newcommand*{\arxiv}[1]{arXiv:  \href{https://arxiv.org/abs/#1}{\urlstyle{rm}\nolinkurl{#1}}}


\newcommand{\defn}[1]{\textcolor{blue}{\emph{#1}}}

\graphicspath{{figs/}}

\def\qqed{\hfill {\hfill $\Box \Box$} \medskip}

\DeclareMathOperator{\sing}{Sing}

\newcommand{\RR}{\mathbb R}

\newcommand{\EE}{\mathbb E}
\newcommand{\R}{\mathbb R}

\newcommand{\bna}{\begin{eqnarray}}
\newcommand{\ena}{\end{eqnarray}}
\newcommand{\ba}{\begin{eqnarray*}}
\newcommand{\ea}{\end{eqnarray*}}
\newcommand{\bs}[1]{}

\newtheorem{theorem}{Theorem}[section]

\newtheorem{lemma}[theorem]{Lemma}
\newtheorem{proposition}[theorem]{Proposition}
\newtheorem{definition}[theorem]{Definition}
\newtheorem{remark}[theorem]{Remark}

\newtheorem{question}[theorem]{Question}

\DeclareMathOperator{\Dim}{Dim}

\DeclareMathOperator{\tr}{tr}

\newcommand{\CC}{{\mathbb C}}

\newcommand{\CT}{{\mathcal A}}

\newcommand{\CL}{{\mathcal L}}
\newcommand{\QQ}{{\mathbb Q}}
\newcommand{\ZZ}{{\mathbb Z}}

\def\p{{\bf p}}

\def\pn{\p =(\p_1, \dots, \p_{n}) }
\def\q{{\bf q}}

\def\v{{\bf v}}

\def\e{{\bf e}}

\def\Q{{\bf Q}}

\def\x{{\bf x}}
\def\y{{\bf y}}

\def\t{{\bf t}}

\def\M{{\bf M}}

\DeclareMathOperator{\lin}{lin}

\title{Generic Unlabeled Global Rigidity}

\author{
Steven J. Gortler\thanks{Partially supported by NSF grant DMS-1564473}
\and 
Louis Theran\and 
Dylan P. Thurston}

\date{}

\usepackage[margin=1in]{geometry}
\textheight     9in

\textwidth       6.5in

\begin{document}
\maketitle 
\begin{abstract}
Let $\p$ be a configuration of $n$ points in $\RR^d$ for some 
$n$ and some
$d \ge 2$. Each pair of points has a Euclidean
distance in the configuration. Given some 
graph $G$ on $n$ vertices, 
we measure the point-pair distances corresponding
to the edges of $G$.

In this paper, 
we study the question of when a generic
$\p$ in $d$ dimensions
will be uniquely determined (up to an
unknowable Euclidean transformation) 
from a given set of point-pair distances together
with knowledge of $d$ and $n$.
In this setting the 
distances are given simply as a set of real numbers; they are 
not labeled with the combinatorial data that describes 
which 
point-pair 
gave rise to which distance, nor is data about
$G$ given.

We show,
perhaps surprisingly, 
that in terms of generic uniqueness, labels have
no effect. A generic configuration is 
determined by an unlabeled set of point-pair
distances
(together with $d$ and $n$) iff it is determined by the 
labeled distances.
\end{abstract}

\section{Introduction}

Let $d$ be some fixed dimension.
\begin{definition}
An \defn{ordered graph} $G = (V,E)$ on $n$ vertices 
$V = \{1,\ldots,n\}$ is an ordered\footnote{The ordering is 
just for notational convenience.} sequence of edges
(unordered vertex pairs). We do not allow self-loops or
duplicate edges.
\end{definition}
Let $G$ be an ordered graph 
(with $n\ge d+2$ vertices and $m$ edges)
and   $\pn$ be a configuration of $n$ points in $\RR^d$,
which we associate with the vertices of $G$ in the 
natural way.
One can
measure the squared Euclidean distances in $\RR^d$
between vertex pairs corresponding to the edges of  $G$. 
This gives us an ordered sequence, $\v$,
of $m$ squared-distance real values. We write this as 
$\v=m^{\EE}_G(\p)$, where $m^{\EE}_G(\cdot)$ maps from configurations
to squared edge-lengths along the edges of $G$ (the $\EE$ superscript
denotes Euclidean).
Importantly, $\v$ does not contain any labeling information
describing which squared-length value is associated to
which vertex-pair; it is simply a sequence of real numbers.

A natural question is:
\begin{center}
\textit{When does $\v$ 
(together with $d$ and $n$)
determine $G$ and $\p$?} 
\end{center}
We can only hope
for $G$ to be unique up to a relabeling of its vertices.
A relabeling is simply a permutation on the vertices,
$\{1,\ldots,n\}$.
Moreover, under this relabeling, we can only hope that
that $\p$
is unique up to a congruence (affine isometry) of $\RR^d$.
Thus, given some other configuration $\q$ and ordered graph
$H$, also with $n$ vertices and $m$ edges, 
such that $\v=m^{\EE}_H(\q)$, under what conditions will we know
that 
$G=H$ up to a vertex 
relabeling and
$\p=\q$ up to congruence?
The restriction that $H$ has exactly $n$ 
vertices is  natural; if $H$ were, say, a tree over $m+1$ 
vertices, it  would be able to produce any $m$-tuple of real numbers including
$\v$ as the squared distance measurement of some configuration.

We will be interested in studying this problem under the non-degeneracy 
assumption that $\p$ is generic.
\begin{definition}
\label{def:genConig}
A configuration $\p$ in $\RR^d$
is \defn{generic} if there
is no non-zero  polynomial relation, with coefficients in $\QQ$,
among the coordinates of $\p$.
\end{definition}

Boutin and Kemper~\cite{BK1} proved that if $G$ 
consists of an ordering of the edges of the
complete graph, $K_n$, and $\p$ is generic, then
uniqueness is guaranteed. There is only one $\p$, up to
a congruence, 
consistent with its unlabeled $\v$.
With this result in hand, 
one can immediately weaken the completeness 
requirement for $G$, and only require that it 
``allows for trilateration'' in $d$ dimensions.
Loosely speaking, this means that $G$ can be built by
gluing together overlapping $K_{d+2}$ graphs
(see~\cite{loops} for formal definitions). This
unlabeled trilateration concept was
first explored in~\cite{dux1}, and a formal proof of 
uniqueness  is given in~\cite{loops}.

Our goal in this paper is to  weaken the conditions on $G$ as much as possible. 
\begin{definition}\label{def:global-rigidity}
Let 
$G$
be an ordered graph and $\p$  a configuration
in $\RR^d$.
We say that that the pair $(G,\p)$ 
is \defn{globally rigid} in $\RR^d$  if
for all configurations $\q$ in $\RR^d$,
$m^{\EE}_G(\p) = m^{\EE}_G(\q)$ implies 
$\p=\q$ (up to congruence).

We say that $G$ 
is \defn{generically globally rigid} in $\RR^d$  if $(G,\p)$ is globally rigid 
for all generic $\p$ in $\RR^d$.
\end{definition}
Gortler, Healy and Thurston \cite{ght} proved:
\begin{theorem}[\cite{ght}]\label{thm:ght}
If an ordered graph $G$ is not generically globally rigid 
in $\RR^d$, then for any generic $\p$, there is a 
non-congruent $\q$ so that $m^{\EE}_G(\p) = m^{\EE}_G(\q)$.
\end{theorem}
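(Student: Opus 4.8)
The plan is to show that over $\CC$ the property ``$(G,\p)$ is globally rigid'' is cut out by polynomials with coefficients in $\QQ$ and holds on a dense Zariski--open set, so that it is automatically the same for every generic $\p$ (which lies in every dense $\QQ$--Zariski--open set); the real statement is then reduced to the complex one. First, local rigidity of $(G,\p)$ is equivalent to the Jacobian of $m^{\EE}_G$ at $\p$ (the rigidity matrix) having the maximal possible rank, and since its entries are polynomials over $\QQ$ this rank is the same for all generic $\p$. If it is not maximal, no generic $\p$ is even rigid, hence none is globally rigid, and the theorem is vacuous. So assume $G$ is generically rigid; then for generic $\p$ the fiber $(m^{\EE}_G)^{-1}(m^{\EE}_G(\p))$ is finite--dimensional, and ``$(G,\p)$ is globally rigid'' says it equals the orbit $E(d)\cdot\p$ of $\p$ under the group $E(d)$ of congruences of $\RR^d$.

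\textbf{Step 2: the complex version is all--or--nothing.} Pass to the complexified map $m^{\EE}_G\colon\CC^{dn}\to\CC^m$ and the complex congruence group $E=\mathrm{O}(d,\CC)\ltimes\CC^d$. For a generic (hence affinely spanning) $\p$, the configurations with the same $K_n$--squared--distances as $\p$ are precisely the $E$--translates of $\p$, so $E\cdot\p=(m^{\EE}_{K_n})^{-1}(m^{\EE}_{K_n}(\p))$ is closed. Call $(G,\p)$ \emph{$\CC$--globally rigid} if $(m^{\EE}_G)^{-1}(m^{\EE}_G(\p))=E\cdot\p$. Suppose some generic $\p_0$ is not $\CC$--globally rigid; its fiber strictly contains the closed set $E\cdot\p_0$, so it has an irreducible component $C\not\subseteq E\cdot\p_0$, and $C\setminus E\cdot\p_0$ is dense in $C$. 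Let $\p$ be any generic configuration. Since the coordinates of $\p_0$ and of $\p$ are each algebraically independent over $\QQ$, there is a $\QQ$--algebra isomorphism $\sigma\colon\overline{\QQ(\p_0)}\to\overline{\QQ(\p)}$ carrying the coordinates of $\p_0$ to those of $\p$; as $C$ is defined over $\overline{\QQ(\p_0)}$, it has a point $\q_0\in C\setminus E\cdot\p_0$ with coordinates in $\overline{\QQ(\p_0)}$. Applying $\sigma$ coordinatewise and using that $\sigma$ is injective and commutes with the $\QQ$--polynomial maps $m^{\EE}_G,m^{\EE}_{K_n}$, the configuration $\q:=\sigma(\q_0)$ satisfies $m^{\EE}_G(\q)=m^{\EE}_G(\p)$ but $m^{\EE}_{K_n}(\q)\neq m^{\EE}_{K_n}(\p)$: a non--congruent complex partner of $\p$. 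Hence the $\CC$--version of the theorem holds --- it fails for one generic $\p$ iff for all.

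\textbf{Step 3: from $\CC$ back to $\RR$ --- the main obstacle.} What remains is that for generic $\p$, global rigidity over $\RR$ is equivalent to $\CC$--global rigidity. One direction is immediate: a real non--congruent partner is a complex one, and conversely, when the complex fiber equals $E\cdot\p$ its real points are exactly $E(d)\cdot\p$. The content is the converse --- \emph{a non--congruent complex partner of a generic $\p$ forces a non--congruent real one} --- and this is where real topology genuinely enters. The natural approach is topological: over the dense $\QQ$--Zariski--open locus $U\subseteq Z:=\overline{m^{\EE}_G(\CC^{dn})}$ where $m^{\EE}_G$ is smooth with smooth fibers, the map becomes proper after quotienting by translations (using that a generically rigid $G$ is connected, so bounded edge lengths bound the configuration), so by Ehresmann's theorem the real fibers over $U(\RR)$ form a locally trivial family and the number of their connected components is \emph{locally} constant on $U(\RR)$; moreover $m^{\EE}_G(\p)\in U(\RR)$ for generic real $\p$, and $E(d)\cdot\p$ is a union of connected components of the real fiber, so $(G,\p)$ is globally rigid over $\RR$ iff that real fiber has no further components. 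The difficulty is that $U(\RR)$ need not be connected --- a real--codimension--one stratum of the discriminant $Z(\RR)\setminus U(\RR)$ can separate it, and the real fiber topology really can jump across it --- so local constancy alone does not make the component count agree over all measurements realized by generic real configurations. Closing this gap is exactly where a crude count of fiber components is insufficient and one needs a finer, manifestly generic certificate: one introduces the equilibrium stress matrix of a generic framework and proves, via Connelly's sufficiency argument together with a converse valid for all generic frameworks, that $(G,\p)$ is globally rigid over $\RR$ for generic $\p$ precisely when the generic framework admits an equilibrium stress of corank $d+1$ --- a condition that is again Zariski--open and defined over $\QQ$, hence holds either for all generic $\p$ or for none. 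In the ``none'' case every generic framework fails it and so, by the converse direction, fails to be globally rigid, which is the conclusion. Steps 1 and 2 are essentially formal; I expect the hard core to be that converse in Step 3: real global rigidity of a single generic framework must already produce a maximal--corank equilibrium stress.
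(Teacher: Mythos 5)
This theorem is quoted by the paper from \cite{ght} without proof, so there is no in-paper argument to compare against; I assess your reconstruction on its own merits. The overall architecture you lay out --- reduce to a $\QQ$-defined, Zariski-open certificate (existence of a corank-$(d+1)$ equilibrium stress matrix), then use Connelly's sufficiency plus a matching necessity to conclude, since such a condition is all-or-nothing on the generic locus --- is indeed the shape of the argument in \cite{ght}. Step 1 is fine in substance (though ``the theorem is vacuous'' in the non-rigid case is the wrong word: that case is nontrivial but easy, handled by a continuous flex producing a non-congruent nearby $\q$). Step 2's Galois-automorphism argument correctly shows that $\CC$-global rigidity is an all-or-nothing generic property, and your diagnosis in Step 3 that a naive Ehresmann/local-triviality argument fails because the real regular-value locus can be disconnected is a genuine obstruction that any correct proof must work around.

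The gap is that you do not prove the statement you yourself flag as the ``hard core'': that real global rigidity of a single generic framework forces an equilibrium stress matrix of rank exactly $n-d-1$. That necessity direction is the principal theorem of \cite{ght} (their Theorem 1.14), occupies most of that paper, and is not obtainable from what you have assembled; it requires a detailed analysis of the measurement map over $\RR$ near a generic value, the structure of the shared stress kernel, and connectivity/fiber-counting arguments well beyond local triviality. As written, your Step 3 is a map of what a proof would need to establish rather than a proof, so the proposal as a whole is incomplete at exactly the step where the content lives. (For what it is worth, the present paper also treats this as a black box: its use of \cite{ght} is via the stress-kernel characterization, e.g.\ Theorem~\ref{thm:sharedKernel} and Lemma~\ref{lem:gauss-fiber-dim}, rather than a re-derivation.)
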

This means, in particular, that every graph is either 
generically globally rigid or generically not globally 
rigid.

Ordered graphs 
that allow for $d$-dimensional trilateration are  
generically globally rigid
in $\RR^d$ (see, e.g., \cite{GGLT13}), 
but there are many\footnote{For $d=2$ and $G$ with $m = O(n\log n)$ edges, results from \cite{JSS07,KMT11}
imply that almost all globally rigid graphs do 
not allow for trilateration.}
graphs that are generically
globally rigid but do not allow for trilateration. A small 
example in two dimensions
is when $G$ comprises the edges of the complete bipartite graph $K_{4,3}$ 
(generic global rigidity follows from the combinatorial considerations of~\cite{conGR,jj}
and can be directly confirmed
using the algorithm from~\cite{conGR,ght}). 
This graph 
does not even contain a single triangle!

If an ordered graph 
$G$ is not generically globally rigid, then one generally
cannot recover $\p$ when given 
\emph{both} $\v$ and $G$ (that is, labeled data).
The recovery problem is 
simply not well-posed. When an ordered graph is generically globally rigid, then  
generally this labeled 
recovery problem will be well-posed, though it still might be
intractable to perform~\cite{saxe}. 
We note that testing whether an 
ordered graph  is generically
globally rigid can be done with an efficient randomized algorithm~\cite{ght}.

From the above, it is clear that generic global
rigidity is necessary for generic unlabeled uniqueness. 
In this paper we prove the following theorem which states
that the property of generic global rigidity of a graph
is also sufficient for generic unlabeled uniqueness.
This result answers a question 
posed in~\cite{loops}.
\begin{theorem}
\label{thm:main}
In any fixed dimension $d \ge 2$, 
let $\p$ be a generic configuration of $n \ge d+2$ 
points. Let
$\v= m^{\EE}_G(\p)$, where 
$G$ 
is an ordered graph 
(with $n$ vertices and $m$ edges)
that is generically 
globally rigid in $\RR^d$.

Suppose there is a 
configuration $\q$, 
also of $n$ points,
along with 
an ordered graph $H$
(with $n$ vertices and $m$ edges)
such that 
$\v= m^{\EE}_H(\q)$.

Then
there is a vertex relabeling of $H$ such that
$G=H$.
Moreover, under this vertex relabeling,
up to congruence,
$\q=\p$.
\end{theorem}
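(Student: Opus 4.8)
\emph{Overview.} The plan is to pass to complexified measurement varieties, use genericity to promote $\v$ to a generic point, force the varieties attached to $G$ and to $H$ to coincide, and then translate that coincidence back into combinatorics via the generic rigidity matroid, the Gortler--Healy--Thurston stress criterion, and the Boutin--Kemper theorem. First, since $\p$ is generic, no two of its $\binom n2$ squared distances are equal (such an equality would be a polynomial relation over $\QQ$), so the $m$ entries of $\v$ are pairwise distinct; matching the $i$-th entry of $\v=m^{\EE}_G(\p)$ with the $i$-th entry of $\v=m^{\EE}_H(\q)$ therefore yields a well-defined length-preserving bijection $\phi\colon E(G)\to E(H)$, which I use to identify the coordinate spaces $\CC^{E(G)}\cong\CC^{E(H)}=:\CC^m$. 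For an ordered graph $F$ on $[n]$ with $m$ edges, let $M_F\subseteq\CC^m$ be the Zariski closure of the image of the complexified map $m^{\EE}_F$; it is $\QQ$-irreducible, being the closure of the image of the irreducible $\QQ$-variety $(\CC^d)^n$. Since generic global rigidity implies generic rigidity, $\dim M_G=D:=dn-\binom{d+1}2$, and since $\p$ is a generic point of $(\CC^d)^n$ its image $\v$ is a generic point of $M_G$, i.e.\ $\operatorname{trdeg}_\QQ\QQ(\v)=D$. Now $\v=m^{\EE}_H(\q)\in M_H$, so $\dim M_H\ge D$; but $\dim M_F\le D$ for every $F$, because the fibers of $m^{\EE}_F$ contain the $\binom{d+1}2$-dimensional congruence orbits. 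Hence $\dim M_H=D$: $H$ is generically rigid and $\v$ is a generic point of $M_H$ as well. A generic point of the $\QQ$-irreducible variety $M_G$ lying on the $\QQ$-closed set $M_H$ forces $M_G\subseteq M_H$, and symmetrically $M_H\subseteq M_G$; therefore $M_G=M_H$.

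Two further facts follow. First, $\phi$ is an isomorphism of the $d$-dimensional generic rigidity matroids of $G$ and of $H$: for any subset $E'\subseteq E(G)$ the tuples $\v|_{E'}$ and $\v|_{\phi(E')}$ are literally equal, so $\operatorname{trdeg}_\QQ\QQ(\v|_{E'})=\operatorname{trdeg}_\QQ\QQ(\v|_{\phi(E')})$, and for generic $\p$ (resp.\ $\q$) these transcendence degrees equal the generic rigidity ranks of $E'$ (resp.\ $\phi(E')$). Second, $\q$ may be taken to be a generic configuration: since $H$ is generically rigid and $\v$ is a generic point of $M_H$, the congruence class of $\q$ is a generic point of configuration space modulo congruence, and after replacing $\q$ by a suitable congruent copy one arranges that $\q$ itself is generic; this last step requires a little care with the field of definition of the congruence group but is routine.

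To finish, I would complete both graphs to $K_n$. Put $\u_\p=m^{\EE}_{K_n}(\p)$ and $\u_\q=m^{\EE}_{K_n}(\q)$, generic points of $M_{K_n}\subseteq\CC^{\binom{[n]}2}$; by genericity all $\binom n2$ entries of $\u_\p$ are distinct. The Boutin--Kemper theorem says that a generic configuration is determined, up to congruence and vertex relabeling, by the unlabeled multiset of its squared distances. Thus it suffices to show that $\u_\p$ and $\u_\q$ carry the same multiset of entries; granting that, Boutin--Kemper supplies $\sigma\in S_n$ with $\q$ congruent to $\sigma_*(\p)$, whence $\u_\q=\sigma\cdot\u_\p$, and comparing via $\phi$ the (pairwise distinct) lengths of corresponding edges shows that $\sigma^{-1}$ carries $E(H)$ onto $E(G)$ matching the two edge orderings --- precisely the asserted relabeling of $H$, under which $\q$ is congruent to $\p$.

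\emph{Main obstacle.} The real work, I expect, is the last claim: that $\u_\p$ and $\u_\q$ carry the same multiset of entries, equivalently that the edge bijection $\phi$ is induced by a bijection of the vertex sets. The rigidity-matroid isomorphism above is not enough by itself --- in $\RR^2$ the generic rigidity matroid need not determine the graph, and in $\RR^1$ it determines it only up to Whitney $2$-isomorphism --- so one must exploit the extra content of generic global rigidity: Hendrickson's necessary conditions ($(d{+}1)$-connectivity and redundant rigidity) and the positive-semidefinite maximal-rank ($=n-d-1$) equilibrium stress matrix furnished by Theorem~\ref{thm:ght}, which encode Gram-matrix and Cayley--Menger information about $M_{K_n}$ that is invisible to the matroid. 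A plausible route is to use these to show that $\phi$ preserves the ``share-a-vertex'' relation among edges, hence is a line-graph isomorphism $L(G)\to L(H)$; since $n\ge d+2\ge 4$, Whitney's line-graph theorem then forces $\phi$ to come from a graph isomorphism $G\to H$, and the global rigidity of $H$ --- which this same analysis should establish --- propagates the isomorphism through the $K_n$-completion to the full relabeling with $\q$ congruent to $\sigma_*(\p)$. The genericity reduction for $\q$ in the second paragraph is a secondary, more routine, technical point.
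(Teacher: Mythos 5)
Your first paragraph is correct and reproduces the paper's reduction: using genericity to show $\v$ is a generic point of $M_{d,G}$, that $\dim M_{d,H}\le dn-\binom{d+1}{2}$ with equality forced, and hence $M_{d,G}=M_{d,H}$. (The paper deduces equality from $M_{d,G}\subseteq M_{d,H}$ plus irreducibility and equal dimension; your two-sided genericity argument is an equivalent route.) Your observation that $\phi$ is a generic-rigidity-matroid isomorphism is also correct and is essentially what the paper uses, but only at $d=1$, where the rigidity matroid is the graphic matroid.

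The genuine gap is exactly where you flag it as the ``main obstacle.'' You reduce the theorem to showing that the edge bijection $\phi$ comes from a vertex bijection, observe (correctly) that the matroid isomorphism alone is insufficient, and then propose a speculative route: extract from equilibrium stresses and Hendrickson's conditions that $\phi$ preserves the ``share-a-vertex'' relation, hence is a line-graph isomorphism, then invoke Whitney's line-graph theorem. You do not carry this out, and it is not clear it can be carried out: nothing in the equality of measurement varieties or in the stress structure obviously encodes incidence of two edges at a common vertex, and the paper does not take this route. What the paper actually proves (Theorem~\ref{thm:iso}) is obtained by an induction on dimension: one shows that $M_{d-1,G}$ can be recognized intrinsically inside $M_{d,G}$ by looking at which points lie in the closures of infinitely many generic Gauss fibers (Propositions~\ref{prop:mainA} and~\ref{prop:mainB}, using the stress-kernel machinery you allude to but in a quite different way), yielding $M_{d-1,G}=M_{d-1,H}$ and inductively $M_{1,G}=M_{1,H}$. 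At $d=1$ the measurement variety determines the graphic matroid (your transcendence-degree argument is exactly Lemma~\ref{lem:ind}), and $3$-connectivity together with Whitney's $2$-isomorphism theorem --- not the line-graph theorem --- finishes. So the missing content in your proposal is the whole of Section~\ref{sec:flat}, which is the technical heart of the paper.

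One smaller point: ``after replacing $\q$ by a suitable congruent copy one arranges that $\q$ itself is generic'' is not correct as stated --- applying a congruence to a non-generic $\q$ cannot raise $\operatorname{trdeg}_\QQ$ above $\operatorname{trdeg}_\QQ(\q)+\binom{d+1}{2}$, so it will not in general produce a generic configuration. What is true, and what the paper uses (Lemma~\ref{lem:preG}), is that the generic fiber of $m_H$ over $\v$ contains \emph{some} generic configuration; one then argues that all preimages of $\v$ with full span are congruent once $H$ is known to be generically globally rigid.
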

\begin{remark}
This theorem is true in one dimension as well, if we add the assumption that $G$ is $3$-connected.  (This assumption will come for free in higher dimension.)
We will, in fact,
use $3$-connectivity in the proof of the more technical Theorem 
\ref{thm:iso} that underlies our main result.
\end{remark}
\begin{remark}
We can state Theorem \ref{thm:main} without ordered graphs as 
follows.  Let $G$
be an unordered generically globally rigid graph in dimension $d\ge 2$, 
and $\p$ be a generic configuration in dimension $d$.  If $H$ is some other
unordered graph with the same number of vertices as $G$ and $\q$ any configuration 
so that $(H,\q)$ has the same unordered set of edge lengths as $(G,\p)$, 
Theorem \ref{thm:main} implies that there is an isomorphism between $G$ and 
$H$ consistent with the bijection on edges induced by the distinct edge lengths of 
a generic measurement.  Furthermore, under 
this isomorphism, $\p$ is congruent to $\q$.

Ordered graphs are a convenience  to avoid referring to 
an implicit isomorphism throughout.
\end{remark}

\begin{remark}
Theorem \ref{thm:ght} implies that 
a generic configuration $\p$ is determined
by its labeled edge lengths iff these edges form a generically globally rigid
graph.  Hence, Theorem \ref{thm:main} says that
 a generic configuration $\p$, 
with known $d$ and $n$,
is uniquely determined (up to relabeling 
and congruence) 
from its (unordered)
unlabeled edge lengths 
iff it is uniquely 
determined (up to congruence)
by its labeled edge lengths.
\end{remark}

Note that for a generically globally rigid graph $G$, 
there can be a non-generic
$(G,\p)$ which is still globally rigid, but for which
$m_G(\p)$ (and $n$)  does not uniquely determine $\p$
in the unlabeled setting. (See~\cite[Figure 4]{BK1}
for an example in the plane where $G$ is $K_4$.)

Since the non-generic failures of this theorem are due to a finite
collection of algebraically expressible exceptions, the uniqueness promised by this theorem actually holds over a 
Zariski open set of configurations.

Our result is information theoretic; it does not give an efficient
algorithm for determining $\p$ from  $\v$.
Indeed, determining $\p$ is
NP-hard, even when given $\v$ \textit{and} $G$~\cite{saxe}.
We will discuss some practical implications and related
questions in Section \ref{sec: practical}.

The body of this paper will be concerned with the proof
of Theorem \ref{thm:main}.  Our approach is to 
reduce the question to one about  the
so-called ``measurement variety'' (defined
in Section \ref{sec:mv}) of $G$, which represents all possible $\v$, as $\p$ varies over all $d$-dimensional configurations. We will want to understand when two distinct ordered graphs, $G$ and $H$, can give rise to the same measurement variety. We will find
(see Theorem \ref{thm:iso}, below)
that when $G$ is generically globally rigid in $d$ dimensions,
then this cannot happen. Theorem \ref{thm:main} then 
follows quickly.

\subsection*{Acknowledgements}
We thank Brian Osserman for fielding algebraic geometry
queries, Meera Sitharam for feedback and discussions 
on the relationship to matrix completion, and Robert 
Krone for an interesting question about coincident points.

\section{Rigidity Background}

In this section we will recall the needed definitions and results
from graph rigidity theory.

\subsection{Local Rigidity}

\begin{definition}
A \defn{framework} $(G,\p)$ is an pair of an ordered graph
and a configuration.  Two frameworks $(G,\p)$ and $(G,\q)$ 
are \defn{equivalent} if $m^\EE_G(\p) = m^\EE_G(\q)$; 
they are \defn{congruent} if $\p$ and $\q$ are congruent.
\end{definition}

\begin{definition}\label{def:local-rigidity}
Let 
$G$
be an ordered graph. 
We say that $(G,\p)$ 
is \defn{locally rigid} in $\RR^d$ if,
a sufficiently small enough neighborhood of $\p$ in the fiber 
$(m^{\EE}_G)^{-1}(m^{\EE}_G(\p))$ consists only of $\q$ that are congruent 
to $\p$. Otherwise we say that $(G,\p)$ is \defn{locally flexible} in $\RR^d$.
\end{definition}
The fiber of $m^\EE_G$ consists of the configurations $\q$ such that $(G,\q)$ is equivalent to 
$(G,\p)$.  So local rigidity means that there is a neighborhood of $\p$ 
in which any $\q$ with $(G,\q)$ equivalent to $(G,\p)$ must be 
congruent to $\p$, in parallel to Definition \ref{def:global-rigidity}.

\begin{definition}
A \defn{first-order flex} or \defn{infinitesimal flex} $\p'$ 
in $\RR^d$ of $(G,\p)$ 
is a
corresponding assignment of vectors $\p'=(\p_1', \dots, \p_n')$,
$\p'_i \in \RR^d$ 
such
that for each $\{i,j\}$, an edge of $G$, the following
holds:
\begin{eqnarray}
(\p_i - \p_j)\cdot (\p'_i - \p'_j)&=&0 \label{eqn:first} 
\end{eqnarray}

A first-order flex $\p'$ in $\RR^d$
is \defn{trivial} if it is the restriction to the vertices of the
time-zero derivative of a smooth
motion of isometries of $\R^d$. 
\end{definition}
The property of being trivial is independent of the graph $G$.

\begin{definition}
A framework $(G,\p)$ in $\RR^d$ 
is called \defn{infinitesimally rigid} in $\R^d$
if it has no infinitesimal flexes in $\R^d$ except for trivial ones.
When $n \ge (d+1)$ this is the same as saying that the rank of 
the differential of $m^{\EE}_G(\cdot)$ at $\p$ is 
$nd-\binom{d+1}{2}$.
If a framework is not infinitesimally rigid in $\RR^d$, 
it is called \defn{infinitesimally flexible} in $\RR^d$.
\end{definition}
We need some standard facts about infinitesimal rigidity.

\begin{theorem}[See e.g., \cite{gluck}]
\label{thm:ir-lr}
If $(G,\p)$ is infinitesimally rigid in $\RR^d$, 
then $(G,\p)$ is locally rigid in $\RR^d$. 
\end{theorem}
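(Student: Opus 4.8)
The plan is to identify, in a neighborhood of $\p$, the fiber $(m^{\EE}_G)^{-1}(m^{\EE}_G(\p))$ with the set of configurations congruent to $\p$; by Definition \ref{def:local-rigidity} this is exactly local rigidity. Two standard facts set the stage. First, infinitesimal rigidity of $(G,\p)$ forces $\p$ to affinely span $\RR^d$: if $\p$ lay in a proper affine subspace, one could move points transverse to it and produce enough extra infinitesimal flexes to push $\operatorname{rank}(d m^{\EE}_G|_{\p})$ strictly below $nd-\binom{d+1}{2}$. Since $\p$ spans $\RR^d$, the only isometry of $\RR^d$ fixing $\p$ is the identity, so the congruence orbit $C(\p)$ (the set of configurations congruent to $\p$) is an embedded smooth submanifold of $\RR^{nd}$ of dimension exactly $\binom{d+1}{2}$ passing through $\p$. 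Second, at every configuration $\q$ the trivial infinitesimal flexes lie in the kernel of $d m^{\EE}_G|_{\q}$, and a short argument (when $\q$ fails to span, there are additional flexes transverse to its affine span) gives $\operatorname{rank}(d m^{\EE}_G|_{\q}) \le nd-\binom{d+1}{2}$ for \emph{all} $\q$.

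Next I would apply the constant rank theorem. The function $\q \mapsto \operatorname{rank}(d m^{\EE}_G|_{\q})$ is lower semicontinuous, is bounded above by $nd-\binom{d+1}{2}$, and attains this value at $\p$ by hypothesis; hence it equals $nd-\binom{d+1}{2}$ throughout some neighborhood $U$ of $\p$. The constant rank theorem then provides local coordinates near $\p$ in which $m^{\EE}_G$ is a linear coordinate projection, so the fiber $F := (m^{\EE}_G)^{-1}(m^{\EE}_G(\p)) \cap U$ is an embedded submanifold of $U$ of dimension $nd-(nd-\binom{d+1}{2}) = \binom{d+1}{2}$.

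Finally I would compare $F$ with $C(\p)$. Congruent configurations have equal $G$-edge lengths, so $C(\p) \cap U \subseteq F$; as these are embedded submanifolds of $U$ of the same dimension $\binom{d+1}{2}$, the set $C(\p) \cap U$ is open in $F$. Thus $C(\p)$ contains an entire neighborhood of $\p$ inside $(m^{\EE}_G)^{-1}(m^{\EE}_G(\p))$, which is precisely the assertion that $(G,\p)$ is locally rigid. I expect the crux to be the constant rank step, and within it the claim that $nd-\binom{d+1}{2}$ is the maximal possible rank of $d m^{\EE}_G$: this is what makes the rank locally constant at an infinitesimally rigid configuration, and it — together with the companion fact that infinitesimal rigidity forces $\p$ to affinely span $\RR^d$ — is what is really being used. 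Both are routine but essential.
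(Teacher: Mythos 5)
The paper does not supply a proof of this theorem; it is cited to the literature (Gluck, and more precisely the rank/constant-rank argument of Asimow--Roth), so there is no in-paper argument to compare against. Your proof is essentially that standard argument, and the structure is sound: establish a local rank bound, invoke the constant rank theorem to get the fiber as a $\binom{d+1}{2}$-dimensional manifold, and match it against the congruence orbit by invariance of domain.

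One caveat is worth flagging. Your opening claim, that infinitesimal rigidity forces $\p$ to affinely span $\RR^d$, is false without the side condition $n \ge d+1$ (which is what the paper's Lemma~\ref{lem:not-flat} assumes). A single edge in $\RR^2$ is infinitesimally rigid yet spans only a line, and there the rank of $dm^{\EE}_G$ equals $1 = nd - 3$ with $n = 2$, so $\binom{d+1}{2}$ is still attained, but the ``full span'' step of your argument breaks down, and with even fewer points (relative to $d$) the rank $nd - \binom{d+1}{2}$ itself is never attained. The fix is small: work with $T(\q) := \dim(\text{trivial flexes at }\q)$ rather than the fixed number $\binom{d+1}{2}$. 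Infinitesimal rigidity says $\ker dm^{\EE}_G|_\p$ has dimension exactly $T(\p)$; the affine span dimension is lower semicontinuous, so $T$ is locally constant near $\p$, which gives both the upper bound on nearby ranks and the dimension count for the local congruence orbit. In the paper's applications $n \ge d+1$ throughout, so your version is adequate for the paper's purposes, but as a proof of the theorem as stated it needs this adjustment.
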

Affine transformations $A$ on $\RR^d$ act on configurations 
pointwise to produce another configuration, i.e., $A(\p)_i := A(\p_i)$.
\begin{lemma}[\cite{CW82}]
\label{lem:ir-aff}
Let $(G,\p)$ be a framework in $\RR^d$ and let 
$A$ be a non-singular affine transformation.  Then
$(G,\p)$ is infinitesimally rigid if and only
if $(G,A(\p))$ is.
\end{lemma}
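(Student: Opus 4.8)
The plan is to reduce infinitesimal rigidity to the dimension of the space of infinitesimal flexes, and then build an explicit linear isomorphism between the flex spaces of $(G,\p)$ and $(G,A(\p))$ out of the linear part of $A$.

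First I would write $A(x) = Mx + t$ with $M$ a non-singular $d \times d$ matrix and $t \in \RR^d$. For any edge $\{i,j\}$ of $G$ the translation cancels, so $A(\p)_i - A(\p)_j = M(\p_i - \p_j)$. Hence a vertex vector field $\q' = (\q'_1, \dots, \q'_n)$ satisfies the infinitesimal flex condition \eqref{eqn:first} for $(G, A(\p))$ precisely when, for every edge $\{i,j\}$,
\[
0 = \bigl(M(\p_i - \p_j)\bigr) \cdot (\q'_i - \q'_j) = (\p_i - \p_j) \cdot (M^\top \q'_i - M^\top \q'_j),
\]
i.e.\ precisely when $\p' := (M^\top \q'_1, \dots, M^\top \q'_n)$ is an infinitesimal flex of $(G, \p)$. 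Since $M$, hence $M^\top$, is invertible, the map $\q' \mapsto \p'$ is a linear isomorphism from the space of infinitesimal flexes of $(G, A(\p))$ onto that of $(G, \p)$; in particular the two flex spaces have the same dimension.

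It then remains to check that the spaces of \emph{trivial} infinitesimal flexes of $\p$ and of $A(\p)$ also have the same dimension. This dimension depends only on $d$ and on the dimension of the affine span of the configuration (it equals $\binom{d+1}{2}$ exactly when the configuration affinely spans $\RR^d$, matching the rank criterion in the definition of infinitesimal rigidity), and a non-singular affine transformation preserves the dimension of the affine span; so these dimensions agree. Since $(G,\cdot)$ is infinitesimally rigid at a configuration iff its space of infinitesimal flexes equals its space of trivial flexes — equivalently, iff these two dimensions coincide, as trivial flexes are always flexes — the two equalities above give the claim.

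The step needing the most care is the trivial-flex bookkeeping: the isomorphism $\q' \mapsto (M^\top \q'_i)_i$ does \emph{not} carry trivial flexes to trivial flexes (conjugating a skew-symmetric matrix by $M$ need not remain skew-symmetric), so one cannot transport the entire picture at once and must instead compare the two trivial-flex dimensions directly. If one restricts attention to configurations affinely spanning $\RR^d$ — which is all that is needed for the generic configurations in this paper — this point is immediate, since then the trivial-flex dimension is $\binom{d+1}{2}$ at both $\p$ and $A(\p)$.
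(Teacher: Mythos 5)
The paper does not prove this lemma --- it is stated with a citation to Crapo and Whiteley \cite{CW82} --- so there is no in-paper argument to compare against. Your proof is correct and is the standard one: you exhibit the invertible linear map $\q' \mapsto (M^\top \q'_i)_i$ between the flex spaces (equivalently, you observe $R(G,A(\p)) = R(G,\p)\cdot\operatorname{diag}(M^\top,\dots,M^\top)$ for the rigidity matrices, so the kernels have the same dimension), and then you handle the trivial flexes by comparing dimensions rather than transporting them, correctly noting that the map $\q'\mapsto M^\top\q'$ need not carry trivial flexes to trivial flexes. The one slip is in your parenthetical: the trivial-flex dimension is $\binom{d+1}{2}-\binom{d-k}{2}$ where $k$ is the dimension of the affine span, and this equals $\binom{d+1}{2}$ both when $k=d$ \emph{and} when $k=d-1$, so ``exactly when the configuration affinely spans $\RR^d$'' is false. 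That parenthetical is not used, though: your argument only needs that this dimension is a function of $d$ and $k$ alone and that a non-singular affine map preserves $k$, both of which are true, so the proof stands.
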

In other words, infinitesimal rigidity is invariant under
affine transformations.
The following two statements are folklore, but we give 
proofs for completeness.
\begin{lemma}\label{lem:not-flat}
Let $G$ be a graph with $n\ge d+1$ vertices and let
$(G,\p)$ be an infinitesimally rigid framework in $\RR^d$.
Then $\p$ has $d$-dimensional affine span.
\end{lemma}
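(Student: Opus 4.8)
The plan is to argue by contradiction. Suppose $(G,\p)$ is infinitesimally rigid in $\RR^d$ but the affine span $V$ of $\p$ has dimension $k \le d-1$; after a translation we may assume $V$ is a linear subspace, and we set $j := d - k \ge 1$ and write $\RR^d = V \oplus V^\perp$ with $\dim V^\perp = j$.

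First I would describe the space $\mathcal{F}$ of infinitesimal flexes of $(G,\p)$ in $\RR^d$. Writing each $\p'_i = \a_i + \b_i$ with $\a_i \in V$ and $\b_i \in V^\perp$, the edge equation \eqref{eqn:first} for an edge $\{i,j\}$ becomes $(\p_i-\p_j)\cdot(\a_i - \a_j) + (\p_i-\p_j)\cdot(\b_i - \b_j) = 0$; the second summand vanishes identically because $\p_i - \p_j \in V$ while $\b_i - \b_j \in V^\perp$. Hence $\p' \in \mathcal{F}$ exactly when $(\a_1,\dots,\a_n)$ is an infinitesimal flex of $(G,\p)$ viewed as a framework in $V \cong \RR^k$, with $(\b_1,\dots,\b_n) \in (V^\perp)^n$ arbitrary. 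In particular $\mathcal{F}$ contains the sum $(V^\perp)^n \oplus \mathcal{T}_k$, where $\mathcal{T}_k \subseteq V^n$ is the space of trivial infinitesimal flexes of $(G,\p)$ inside $\RR^k$; this sum is direct since $(V^\perp)^n \cap V^n = \{0\}$. Because $\p$ affinely spans $V$, the map sending a pair (skew-symmetric $k\times k$ matrix, translation in $\RR^k$) to the resulting infinitesimal flex is injective, so $\dim \mathcal{T}_k = \binom{k}{2}+k = \binom{k+1}{2}$; thus $\dim \mathcal{F} \ge nj + \binom{k+1}{2}$.

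Next I would plug in $n \ge d+1$ together with the elementary identity $\binom{d+1}{2} = \binom{k+1}{2} + \binom{j}{2} + (k+1)j$ (which follows by writing $d+1 = (k+1)+j$). This gives $\dim\mathcal F \ge (d+1)j + \binom{k+1}{2} = \binom{d+1}{2} + \bigl((d+1)-(k+1)\bigr)j - \binom{j}{2} = \binom{d+1}{2} + j^2 - \binom{j}{2} = \binom{d+1}{2} + \binom{j+1}{2}$, which is strictly larger than $\binom{d+1}{2}$ because $j \ge 1$. On the other hand, infinitesimal rigidity in $\RR^d$ says every infinitesimal flex of $(G,\p)$ is trivial, and the trivial flexes form the image of the $\binom{d+1}{2}$-dimensional space of (skew-symmetric $d\times d$ matrix, translation) pairs, so $\dim\mathcal{F} \le \binom{d+1}{2}$ (equivalently: the differential of $m^{\EE}_G$ at $\p$ has rank $nd-\binom{d+1}{2}$, so $\dim\mathcal{F} = \binom{d+1}{2}$). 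This is the desired contradiction, so $\p$ must have $d$-dimensional affine span.

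The arithmetic is routine; the only delicate point — and the only place the hypothesis $n \ge d+1$ is used — is the final strict inequality. With only $n \ge k+1$ the same bookkeeping would give $\dim\mathcal F \ge \binom{d+1}{2}$ and no contradiction, and indeed a flat framework can be infinitesimally rigid when $n$ is too small (e.g.\ a single edge in $\RR^2$, where $k=1<d=2$). So the ``hard part'' is essentially just being careful to isolate the $nj$ free flex directions normal to $V$ and the $\binom{k+1}{2}$ genuine Euclidean motions inside $V$, check that together they are independent, and verify that their combined dimension overshoots $\binom{d+1}{2}$.
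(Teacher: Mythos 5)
Your proof is correct and takes the same dimension-counting approach as the paper's: if the affine span of $\p$ is deficient, the space of infinitesimal flexes of $(G,\p)$ is too large to consist only of trivial flexes, so $(G,\p)$ cannot be infinitesimally rigid. If anything, your version is a bit more careful than the paper's, which essentially only spells out the count for the codimension-one case (it quotes an $n$-dimensional space of flexes normal to the span and a $\binom{d}{2}$-dimensional space of rigid-motion flexes ``in dimension $d-1$''), whereas your identity $\binom{d+1}{2}=\binom{k+1}{2}+\binom{j}{2}+(k+1)j$ handles every deficiency codimension $j\ge 1$ uniformly and makes the independence of the two families of flexes explicit.
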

\begin{proof}
Any assignment of vectors orthogonal to the affine span of $\p$
is an infinitesimal flex of $(G,\p)$.
Hence, if $\p$ has defective affine span, there is, at least, an 
$n$-dimensional 
space of infinitesimal flexes of $(G,\p)$ orthogonal to the 
affine span of $\p$.  There is also, at least,
a $\binom{d}{2}$-dimensional space (from rigid motions in dimension $d-1$) 
of infinitesimal flexes within the affine span of $(G,\p)$.  Thus 
$(G,\p)$ has infinitesimal flex space of dimension at least
$\binom{d}{2} + d + 1 > \binom{d+1}{2}$.
\end{proof}

\begin{lemma}
\label{lem:ccomp}
Let $(G,\p)$ be a framework. Then, up to congruence, there are only
a finite number of configurations $\q$ so that $(G,\q)$ is locally rigid
and equivalent to $(G,\p)$.
\end{lemma}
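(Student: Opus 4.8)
The plan is to study the fiber $V := (m^\EE_G)^{-1}(\v)$, where $\v := m^\EE_G(\p)$. This set is precisely the collection of configurations $\q$ for which $(G,\q)$ is equivalent to $(G,\p)$, and for any such $\q$ it is also the fiber $(m^\EE_G)^{-1}(m^\EE_G(\q))$ that appears in the definition of local rigidity. Since $m^\EE_G$ is a polynomial map, $V$ is a real algebraic subset of $(\RR^d)^n$, and hence has finitely many connected components in the Euclidean topology (a standard fact about real algebraic, or more generally semialgebraic, sets). It therefore suffices to prove that any connected component $C$ of $V$ that contains a locally rigid configuration is contained in a single congruence orbit; granting this, the locally rigid members of $V$ meet only finitely many components and contribute one congruence class each.

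Two elementary observations drive the argument. First, local rigidity is congruence-invariant: for a congruence $A$, the map $A$ is a self-homeomorphism of $(\RR^d)^n$ that carries the fiber through $\q$ (which equals the fiber through $A\q$, the two having the same edge lengths) to itself and carries the congruence class of $\q$ to that of $A\q$; hence $(G,\q)$ is locally rigid exactly when $(G,A\q)$ is. Second, the orbit $O_\q$ of $\q$ under the congruence group is a closed subset of $(\RR^d)^n$: writing a congruence as a pair $(R,t)$ with $R$ orthogonal and $t$ a translation vector, if $(R_k,t_k)\cdot\q$ converges then compactness of the orthogonal group lets us extract a subsequence with $R_k \to R$, whence $t_k = ((R_k,t_k)\cdot\q)_1 - R_k\q_1$ converges as well, and the limit of the orbit points is $(R,t)\cdot\q \in O_\q$.

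Now let $C$ be a connected component of $V$ containing a locally rigid $\q$. For every $\q' \in O_\q \cap C$, the configuration $\q'$ is congruent to $\q$ and hence locally rigid, so some neighborhood of $\q'$ in $V$ consists only of configurations congruent to $\q'$, i.e.\ lies inside $O_\q$; intersecting with $C$ shows that $O_\q \cap C$ is open in $C$. It is also closed in $C$, because $O_\q$ is closed in $(\RR^d)^n$ and $C$, being a connected component of the closed set $V$, is closed in $(\RR^d)^n$ as well. As $C$ is connected and $O_\q\cap C$ is nonempty, $O_\q\cap C = C$, so every configuration in $C$ is congruent to $\q$, as required.

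The argument is otherwise routine; the one external input is the finiteness of the number of connected components of a real algebraic set, and the one point that needs a little care is the closedness of the congruence orbit, which is exactly what upgrades ``$O_\q\cap C$ is open'' to ``$O_\q\cap C$ is all of $C$.''
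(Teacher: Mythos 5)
Your proof is correct and takes essentially the same approach as the paper: both arguments observe that the fiber $V$ is a real algebraic set with finitely many connected components, and that any component containing a locally rigid configuration lies entirely in a single congruence class. Your open--closed argument on the component, together with the verification that the congruence orbit is closed, carefully fills in a step that the paper asserts without detail (``it is only connected in $V$ to other frameworks in its congruence class'').
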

\begin{proof}
The set of frameworks that are equivalent to $\p$ form an algebraic
variety $V$. 
From the definition of local rigidity, if $\q$ is in $V$
and locally rigid, then it is only connected in $V$ to other frameworks
in its congruence class (in fact only ones that do not involve reflection).
Thus an infinite number of such $\q$ would imply an infinite number
of connected components in $V$.
But
as a variety,  $V$ must have a finite
number of connected components.
\end{proof}

\begin{definition}
If $(G,\p)$ is locally rigid for all generic configurations
$\p$ in $\RR^d$, then we say that $G$ is \defn{generically locally rigid} in $\RR^d$. 
If $(G,\p)$ is locally flexible for all generic configurations
$\p$ in $\RR^d$, then we say that $G$ is \defn{generically 
locally flexible} in $\RR^d$. 

If $(G,\p)$ is infinitesimally rigid for all generic configurations
$\p$ in $\RR^d$, then we say that $G$ is \defn{generically 
infinitesimally rigid} in $\RR^d$. 
If $(G,\p)$ is infinitesimally flexible for all generic configurations
$\p$ in $\RR^d$, then we say that $G$ is \defn{generically 
infinitesimally flexible} in $\RR^d$. 
\end{definition}

As described in~\cite{asimow},
generic local rigidity is determined by generic infinitesimal 
rigidity.
\begin{theorem}[\cite{asimow}]
\label{thm:glr}
If some framework $(G,\p)$ in $\RR^d$
is infinitesimally rigid in $\RR^d$, then 
$G$ is generically infinitesimally rigid in $\RR^d$ and thus
generically 
locally rigid in $\RR^d$.
If  $G$ is not
generically infinitesimally rigid in $\RR^d$ then it is 
generically locally flexible in $\RR^d$.
Thus, if $G$ is not generically locally rigid in $\RR^d$
then it is 
generically locally  flexible in $\RR^d$.
\end{theorem}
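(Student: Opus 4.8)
The plan is to carry out the classical Asimow--Roth argument with the rigidity matrix $M(\p) := \mathrm{d}(m^{\EE}_G)_\p$ --- whose entries are polynomials in the coordinates of $\p$ with integer coefficients --- as the central object. First I would introduce $r := \max_{\p \in \RR^{nd}} \rank M(\p)$. Since $\rank M(\p) \ge k$ holds exactly when some $k \times k$ minor of $M(\p)$ is nonzero, the set $U := \{\p : \rank M(\p) = r\}$ is the complement of a proper $\QQ$-Zariski-closed subset of $\RR^{nd}$; in particular $U$ is nonempty, open, and defined over $\QQ$, so every generic $\p$ lies in $U$ and hence has $\rank M(\p) = r$. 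A generic $\p$ also has full $d$-dimensional affine span (otherwise its first $d+1$ points would be affinely dependent, a nonzero $\QQ$-polynomial relation among the coordinates), so the trivial infinitesimal flexes of such a $\p$ form a subspace of $\ker M(\p)$ of dimension exactly $\binom{d+1}{2}$. Consequently, for a generic $\p$ the kernel equals the trivial flex space --- i.e., $(G,\p)$ is infinitesimally rigid --- precisely when $r = nd - \binom{d+1}{2}$, and otherwise $r < nd - \binom{d+1}{2}$ and every generic $(G,\p)$ is infinitesimally flexible. This already yields the needed generic dichotomy for infinitesimal rigidity.

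Now suppose some framework $(G,\p_0)$ is infinitesimally rigid. By Lemma \ref{lem:not-flat} (applicable since $n \ge d+1$), $\p_0$ has full affine span, so $\ker M(\p_0)$ is exactly the $\binom{d+1}{2}$-dimensional space of trivial flexes and $\rank M(\p_0) = nd - \binom{d+1}{2}$; hence $r = nd - \binom{d+1}{2}$, so every generic $(G,\p)$ is infinitesimally rigid, and then generically locally rigid by Theorem \ref{thm:ir-lr}. Next suppose instead that $G$ is not generically infinitesimally rigid, so by the dichotomy $r < nd - \binom{d+1}{2}$. Fix a generic $\p$. Since $r$ is the maximal rank and matrix rank is lower semicontinuous, $M$ has constant rank $r$ on a Euclidean neighborhood of $\p$, so by the constant-rank theorem the fiber $(m^{\EE}_G)^{-1}(m^{\EE}_G(\p))$ is, near $\p$, a smooth manifold of dimension $nd - r > \binom{d+1}{2}$. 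The congruence class of $\p$ is contained in this fiber and, since $\p$ has full affine span, is a smooth submanifold of dimension $\binom{d+1}{2}$; being of strictly smaller dimension it is nowhere dense in the fiber near $\p$, so every neighborhood of $\p$ contains configurations $\q$ with $m^{\EE}_G(\q) = m^{\EE}_G(\p)$ that are not congruent to $\p$. Thus $(G,\p)$ is locally flexible and $G$ is generically locally flexible. Finally, the last assertion is formal: if $G$ is not generically locally rigid then it cannot be generically infinitesimally rigid (infinitesimal rigidity implies local rigidity, Theorem \ref{thm:ir-lr}), so by the previous case it is generically locally flexible.

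I expect the main obstacle to be the step that converts generic \emph{infinitesimal} flexibility into genuine finite local flexibility. The delicate points are (i) that one needs the rank of $M$ to be not merely generically maximal but locally constant near a generic configuration --- which is why lower semicontinuity of rank is invoked, in order to apply the constant-rank theorem --- and (ii) that the dimension count only closes because a generic configuration has full affine span, so that its congruence class is a genuine $\binom{d+1}{2}$-dimensional submanifold of the fiber rather than something larger.
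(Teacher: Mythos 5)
The paper states this theorem without proof, citing Asimow--Roth~\cite{asimow}; your argument is a correct reconstruction of that classical proof. The key steps --- that the maximal rank of the rigidity matrix is attained on a nonempty $\QQ$-Zariski-open set containing all generic configurations, and that the constant-rank theorem plus a dimension count on the congruence orbit converts generic infinitesimal flexibility into genuine local flexibility --- are precisely the content of~\cite{asimow}. The only thing worth flagging is that you invoke Lemma~\ref{lem:not-flat} and the rank characterization of infinitesimal rigidity, both of which assume $n \ge d+1$; this hypothesis is not stated in the theorem, but the paper tacitly works in that regime throughout (and for $n < d+1$ the statement is vacuous since every framework is infinitesimally rigid), so this is a cosmetic omission rather than a gap.
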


\subsection{Global Rigidity}

The following two results about generic global rigidity
will be useful.
\begin{lemma}
\label{lem:descend}
Let $G$ be generically globally rigid in $\RR^d$. Then $G$ 
is generically globally rigid in $\RR^{d-1}$.
\end{lemma}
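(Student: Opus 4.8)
The plan is to prove the contrapositive: assume $G$ is \emph{not} generically globally rigid in $\RR^{d-1}$ and produce, for a generic configuration $\p$ in $\RR^d$, a non-congruent $\q$ with $m^\EE_G(\p) = m^\EE_G(\q)$, which shows $G$ is not generically globally rigid in $\RR^d$ either. By Theorem \ref{thm:ght} it suffices to exhibit \emph{one} generic $\p$ in $\RR^{d-1}$ (viewed inside $\RR^d$ via the standard embedding $\RR^{d-1} \hookrightarrow \RR^d$, $x \mapsto (x,0)$) together with a non-congruent equivalent $\q$ in $\RR^d$; genericity of $\p$ as a point of $\RR^{d-1}$ does not immediately give genericity in $\RR^d$, but this is handled below. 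Actually the cleanest route is: pick $\p$ generic in $\RR^{d-1}$; since $G$ is not generically globally rigid in $\RR^{d-1}$, there is a non-congruent $\q$ in $\RR^{d-1}$ with $m^\EE_G(\p) = m^\EE_G(\q)$. Now embed both $\p$ and $\q$ into $\RR^d$ by appending a zero coordinate. The squared edge lengths are unchanged, so the embedded frameworks are equivalent, and they remain non-congruent in $\RR^d$ (a congruence of $\RR^d$ carrying $\p$ to $\q$ would have to preserve their common affine span, a hyperplane, hence restrict to a congruence of $\RR^{d-1}$). This exhibits a non-globally-rigid framework $(G,\p)$ in $\RR^d$ with $\p$ flat.

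The remaining gap is that $\p$, as constructed, lies in a hyperplane and is therefore \emph{not} generic in $\RR^d$, so I cannot yet conclude via Definition \ref{def:global-rigidity} directly. To close this, I would invoke Theorem \ref{thm:ght}: it asserts that $G$ is either generically globally rigid in $\RR^d$ or generically \emph{not} globally rigid in $\RR^d$, and moreover the latter holds as soon as $G$ is "not generically globally rigid." So I need the slightly different statement that producing one non-generic counterexample forces generic failure. The standard way to bridge this: if $(G,\p)$ with $\p$ flat in $\RR^d$ is locally flexible, one perturbs; but global rigidity failure is not obviously a closed/open condition on $\p$. Instead I would argue contrapositively at the level of the theorem itself — if $G$ \emph{were} generically globally rigid in $\RR^d$, then by Theorem \ref{thm:ght} every framework failure is confined to non-generic $\p$, which is consistent with our flat example and gives no contradiction. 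So this route needs more care.

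The main obstacle, and the step I expect to be delicate, is exactly this transfer from a flat (non-generic) counterexample in $\RR^d$ back to a statement about generic configurations. The robust fix is to run the argument differently: suppose $G$ is generically globally rigid in $\RR^d$ and let $\p$ be generic in $\RR^{d-1}$. We want to show $(G,\p)$ is globally rigid in $\RR^{d-1}$. Suppose not: there is $\q$ in $\RR^{d-1}$, non-congruent to $\p$, with equal edge lengths. Consider the configuration $\p$ embedded in $\RR^d$; it is not generic there, so global rigidity in $\RR^d$ does not directly apply to it. However, one can use the fact (from \cite{ght}, or from the theory of the stress matrix / \cite{conGR,jj}) that generic global rigidity in $\RR^d$ is a property detected by the rank of a stress matrix, and a suitable equilibrium stress for the flat framework $(G,\p)$ in $\RR^d$ is inherited from the $(d-1)$-dimensional picture, forcing the stress-matrix rank to be $n-d$ rather than $n-d-1$ when there are multiple incongruent realizations — contradicting the rank characterization of generic global rigidity in $\RR^d$. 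Carrying this out rigorously — matching up the stress-matrix conditions across dimensions, and handling the kernel dimension bookkeeping ($n-d$ versus $n-d-1$) — is where the real work lies; the rest is the elementary embedding observation that squared distances and non-congruence are preserved under $\RR^{d-1} \hookrightarrow \RR^d$.
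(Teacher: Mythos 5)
Your proposal correctly identifies the central obstacle (a flat configuration in $\RR^d$ is not generic in $\RR^d$, so Definition \ref{def:global-rigidity} and Theorem \ref{thm:sharedKernel} do not apply to it directly), but neither of your two routes closes it, and you explicitly leave the hard step undone. In fact your second, stress-matrix route contains a directional confusion: if $G$ fails to be generically globally rigid in $\RR^{d-1}$, then Theorem \ref{thm:sharedKernel} gives a shared stress kernel of dimension strictly \emph{greater} than $d$ for generic $\q$ in $\RR^{d-1}$, i.e.\ every equilibrium stress matrix has rank strictly \emph{less} than $n-d$ --- not rank $n-d$. Carrying these stresses up to the flat embedding of $\q$ in $\RR^d$ tells you only that this flat framework has stress matrices of rank $< n-d$, which contradicts nothing, since the rank criterion of Theorem \ref{thm:sharedKernel} (and its converse via Remark \ref{rem:shared}) is stated only for \emph{generic} configurations in $\RR^d$, and a flat configuration is never generic there. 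Matching the kernel-dimension bookkeeping across dimensions via flat embeddings is not a gap you can fill with the tools at hand; it is essentially the whole content of the lemma.

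The paper's proof avoids the flat-embedding trap entirely by using coning. Adding a cone vertex joined to all $n\ge d+2$ vertices of $G$ preserves generic global rigidity in $\RR^d$ (a standard vertex-addition fact for global rigidity: a new vertex attached to at least $d+1$ others of a generically globally rigid graph keeps the property). One then invokes the Connelly--Whiteley coning theorem, which says that $\mathrm{cone}(G)$ is generically globally rigid in $\RR^d$ if and only if $G$ is generically globally rigid in $\RR^{d-1}$. Chaining these two facts gives the lemma in two lines, with all the dimension-shifting work packaged into the coning theorem rather than into an ad hoc stress-kernel computation. If you want to make your stress-matrix route work, you would effectively be reproving the coning theorem; the cleaner move is simply to cite it.
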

\begin{proof}
If $G$ is generically globally rigid in dimension $d$, then
it remains so under \emph{coning}, 
the process of adding one vertex
and attaching it to all vertices in $G$. 
A result of Connelly and Whiteley,  
\cite[Corollary 10]{cone}, then implies
that $G$ is generically globally 
rigid in $\RR^{d-1}$.
\end{proof}

A theorem of Hendrickson relates generic 
global rigidity and connectivity:
\begin{theorem}[\cite{hen}]
\label{thm:hen}
Let $G$ be generically globally rigid in $\RR^d$. Then $G$ 
is $d+1$-connected.
\end{theorem}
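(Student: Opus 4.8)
The plan is to prove the contrapositive by the classical reflection/rotation argument. Throughout I use the standing hypothesis $n\ge d+2$; it is genuinely needed, since e.g.\ $K_3$ is generically globally rigid in every $\RR^d$ but only $2$-connected. Suppose $G$ is \emph{not} $(d+1)$-connected. Then I fix a vertex set $S\subseteq V$ with $p:=|S|\le d$ whose deletion disconnects $G$, and write $V\setminus S=V_1\sqcup V_2$ with $V_1,V_2$ nonempty and no edge of $G$ between them. For a generic $\p$ the goal is to produce a $\q$ that is equivalent to but not congruent to $\p$; then $(G,\p)$ is not globally rigid, so by Definition~\ref{def:global-rigidity} (or Theorem~\ref{thm:ght}) $G$ is not generically globally rigid. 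The basic mechanism is: if $g$ is any isometry of $\RR^d$ fixing the affine span $\operatorname{aff}(\p_S)$ pointwise, then moving the $V_1$-coordinates by $g$ and freezing those indexed by $S\cup V_2$ yields a $\q$ with $m^{\EE}_G(\q)=m^{\EE}_G(\p)$: edges inside $V_1$ move isometrically, edges from $V_1$ to $S$ terminate at points $g$ fixes, and there are no $V_1$--$V_2$ edges.

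For the case $p=d$: here $\Pi:=\operatorname{aff}(\p_S)$ is a hyperplane (genericity), and I take $\q$ to be $\p$ with $\p_{V_1}$ reflected across $\Pi$. Since $|S\cup V_2|\ge d+1$, the subconfiguration $\p_{S\cup V_2}$ affinely spans $\RR^d$, so the only isometry fixing it pointwise is the identity; as $\q$ agrees with $\p$ on $S\cup V_2$, $\q\cong\p$ would force $\q=\p$, i.e.\ $\p_{V_1}\subseteq\operatorname{aff}(\p_S)$, a nonzero polynomial relation over $\QQ$ among the coordinates of $\p$, contradicting genericity (Definition~\ref{def:genConig}).

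For the case $p\le d-1$: now the group $\Gamma$ of isometries fixing $\operatorname{aff}(\p_S)$ pointwise is $\cong O(d-p+1)$, hence positive-dimensional, so moving the $V_1$-coordinates by $\Gamma$ sweeps out a smooth family $\mathcal O$ of frameworks equivalent to $(G,\p)$. Using that generic subconfigurations on at most $d+1$ vertices are affinely independent (so all relevant spans have their expected dimensions), I compute $\dim\mathcal O=\binom{d-p+1}{2}-\binom{d-p-|V_1|+1}{2}$ (with $\binom{k}{2}:=0$ for $k\le1$). Every element of $\mathcal O$ agrees with $\p$ on $S\cup V_2$, so if $(G,\p)$ were globally rigid, $\mathcal O$ would lie inside the orbit of $\p$ under the isometries fixing $\operatorname{aff}(\p_{S\cup V_2})$ pointwise, a manifold of dimension $\binom{d-p-|V_2|+1}{2}$. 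But with $A:=d-p+1\ge2$ and $|V_1|+|V_2|=n-p\ge A+1$ (this is exactly where $n\ge d+2$ enters) and $|V_1|,|V_2|\ge1$, convexity of $t\mapsto\binom{t}{2}$ gives
\[
\binom{d-p-|V_1|+1}{2}+\binom{d-p-|V_2|+1}{2}\;\le\;\binom{A-1}{2}\;<\;\binom{A}{2},
\]
so $\dim\mathcal O>\binom{d-p-|V_2|+1}{2}$, contradicting global rigidity of $(G,\p)$. Hence $G$ must be $(d+1)$-connected.

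The step I expect to be the main obstacle is the dimension bookkeeping in the case $p\le d-1$: carefully justifying the formula for $\dim\mathcal O$ and the bound on the dimension of the locus of configurations that are congruent to $\p$ and agree with it on $S\cup V_2$ (both require knowing the exact dimensions of affine spans of generic subconfigurations, and that orbits of the relevant isometry groups are embedded submanifolds). Once that is set up, the closing numerical inequality is short, and the $p=d$ case together with all the edge-length verifications are routine. An alternative packaging of the $p\le d-1$ case is to observe directly that moving $V_1$ about $\operatorname{aff}(\p_S)$ produces a genuine continuum of pairwise non-congruent frameworks equivalent to $(G,\p)$; the content is the same, namely that $n\ge d+2$ forces this flex family to be strictly larger than the group of congruences fixing $S\cup V_2$.
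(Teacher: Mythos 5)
The paper does not prove Theorem~\ref{thm:hen}; it is quoted from Hendrickson~\cite{hen} as a black box. So the question is just whether your argument is correct, and it is. Your contrapositive proof is the classical separator/reflection argument, and the details check out. The mechanism (apply an isometry $g$ fixing $\operatorname{aff}(\p_S)$ pointwise to the $V_1$-block only, using the absence of $V_1$--$V_2$ edges to preserve edge lengths) is exactly the standard one. The $p=d$ case is a clean reflection across the hyperplane $\operatorname{aff}(\p_S)$, with non-congruence forced by $|S\cup V_2|\ge d+1$ and genericity. For $p\le d-1$ you replace a single reflection by a dimension count on the whole orbit $\mathcal O$ of $\p_{V_1}$ under $\Gamma\cong O(d-p+1)$, comparing $\dim\mathcal O = \binom{d-p+1}{2}-\binom{d-p-|V_1|+1}{2}$ against the bound $\binom{d-p-|V_2|+1}{2}$ on the dimension of the locus of configurations congruent to $\p$ and agreeing with it on $S\cup V_2$; the convexity inequality using $n\ge d+2$ closes the gap. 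That inequality is correct (with the convention $\binom{k}{2}=0$ for $k\le 1$), and the orbit-dimension formula is justified by compactness of $\Gamma$ and generic affine independence of small subconfigurations.

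Two small remarks, neither of which is a gap. First, you are right that the theorem as literally stated needs $n\ge d+2$ (the $K_{d+1}$ / $K_3$ examples show it fails otherwise); Hendrickson's original statement carries that hypothesis, and the paper clearly has it as a standing assumption elsewhere (e.g.\ Theorem~\ref{thm:hen2}). Second, your identification $\Gamma\cong O(d-p+1)$ is accurate for $p\ge 1$; if you also want to cover $p=0$ (a disconnected $G$), $\Gamma$ is the full isometry group $O(d)\ltimes\RR^d$ rather than $O(d+1)$, but it has the same dimension $\binom{d+1}{2}$, so the count is unaffected. Alternatively one can dispose of disconnected $G$ outright by translating one component. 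An alternative to your dimension bookkeeping in the $p\le d-1$ case is to reflect across a generically chosen hyperplane through $\operatorname{aff}(\p_S)$ and argue directly when $|S\cup V_2|\ge d+1$; your orbit-dimension argument has the advantage of handling the case $|S\cup V_2|\le d$ uniformly, which the bare reflection argument does not cover as immediately.
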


Now we review idea of equilibrium stresses and how they
relate to global rigidity.

\begin{definition}
\label{def:stress}
Given an ordered  graph $G$,
 a \defn{stress vector} $\omega =( \dots,
 \omega_{ij}, \dots )$, is an assignment of a real scalar
 $\omega_{ij}=\omega_{ji}$ to each edge, $\{i,j\}$ in $G$.  (We have
 $\omega_{ij}=0$ when $\{i,j\}$ is not an edge of $G$.)

We say that 
 $\omega$ is an \defn{equilibrium stress vector}
for $(G,\p)$ if the vector equation
\begin{equation}\label{eqn:equilibrium}
 \sum_j \omega_{ij}(\p_i-\p_j) = 0
 \end{equation}
holds for all vertices $i$ of $G$.

We associate an $n$-by-$n$
\defn{stress matrix} $\Omega$ to a stress vector $\omega$, 
by setting the $i,j$th entry of $\Omega$ to
$-\omega_{ij}$, for $i \ne j$, and the diagonal entries of $\Omega$
are set such that the row and column sums of $\Omega$ are zero.  
 
If $\omega$ is an equilibrium stress vector for
$(G,\p)$ then we say that the associated $\Omega$ is an 
\defn{ equilibrium stress matrix}
 for $(G,\p)$. For each of the  $d$ spatial dimensions,
if we define a vector $\v$ in $\RR^n$ by collecting  the  associated 
coordinate over all of the points in $\p$, we have $\Omega \v=0$.
The all-ones vector is also in the kernel of $\Omega$.
Thus if the 
dimension of the affine span of the vertices $\p$ is $d$, then the
rank of $\Omega$ is at most $n-d-1$, but it could be less.
\end{definition}

\begin{definition}\label{def:stress-kernel}
Let $S$ be linear space of stress matrices.  We 
define the \defn{shared stress kernel} of $S$ to be the
subspace of $\RR^n$ consisting of vectors in the kernel
of every $\Omega\in S$.

The shared stress kernel of a framework 
$(G,\p)$ is the shared stress kernel of the linear space of
equilibrium stress matrices for $(G,\p)$.

From  
the equilibrium condition, we see that the shared
stress  kernel of $(G,\p)$ 
contains the $d$ coordinates of $\p$ along with the 
all-ones vector.  Thus, if the 
dimension of the affine span of the vertices $\p$ is $d$, then the
dimension of the shared stress kernel
is at least $d+1$, but it could be more.
\end{definition}

Below is the central theorem we shall use that connects
generic global rigidity with the dimension of the shared
stress kernel at generic $\p$.

\begin{theorem}[{\cite[Theorems 1.14 and 4.4]{ght}}]
\label{thm:sharedKernel}
Let $G$ be an ordered graph with $n \ge d+2$ vertices.
If $G$ is generically globally rigid
in $\RR^d$, then there is a generic $\p$ with  an equilibrium stress matrix of 
rank $n-d-1$. 
Thus there is a generic $\p$ with a 
shared stress kernel of dimension $d+1$.

If $G$ is not generically globally rigid
in $\RR^d$, then 
every generic $\p$ has
shared stress kernel of dimension $> d+1$. (This direction
is essentially Connelly's sufficient condition~\cite{conGR}
as strengthened slightly in~\cite[Section 4.2]{ght}).
\end{theorem}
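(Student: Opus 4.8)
The two bullets are the two directions of the equivalence
\[
\text{$G$ is generically globally rigid in $\RR^d$}\iff\text{for generic $\p$ the shared stress kernel of $(G,\p)$ has dimension $d+1$,}
\]
together with the bookkeeping that this dimension is always $\ge d+1$, since the shared stress kernel contains $\mathbf 1$ and the coordinate vectors $\p^1,\dots,\p^d\in\RR^n$ of $\p$ (Definition \ref{def:stress-kernel}). Granting the equivalence, the first bullet's closing ``Thus\dots'' is immediate: an equilibrium stress matrix of rank $n-d-1$ has kernel of dimension $d+1$ containing $\lin\{\mathbf 1,\p^1,\dots,\p^d\}$ and hence equal to it, so the shared stress kernel is squeezed to dimension $d+1$. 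One genericity principle I will use freely, and which is genuinely part of the machinery (it fails for an arbitrary linear family of symmetric matrices), is that at a \emph{generic} configuration the shared stress kernel has dimension $n-r_G$, where $r_G$ is the maximal rank of an equilibrium stress over generic configurations; equivalently, a generic equilibrium stress at a generic configuration realizes the shared stress kernel as its kernel. With this, ``shared stress kernel of dimension $d+1$'' and ``some equilibrium stress of rank $n-d-1$'' coincide at generic configurations, and likewise the negations.

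\textbf{The Connelly direction (second bullet).}
I will prove the contrapositive: if some generic $\p$ has shared stress kernel of dimension exactly $d+1$, then $(G,\p)$ is globally rigid, hence (only genericity of $\p$ was used) $G$ is generically globally rigid. The plan is Connelly's energy argument with the genericity enhancements of \cite[Section 4.2]{ght} over \cite{conGR}. First, upgrade the hypothesis to a \emph{positive semidefinite} equilibrium stress matrix $\Omega$ of rank $n-d-1$, with $\ker\Omega=\lin\{\mathbf 1,\p^1,\dots,\p^d\}$ -- this is where genericity of $\p$ is essential. Then the form
\[
E(\mathbf r)\;=\;\sum_{\{i,j\}\in E}\omega_{ij}\,\lvert \mathbf r_i-\mathbf r_j\rvert^2\;=\;\sum_{k=1}^d \trans{(\mathbf r^k)}\,\Omega\, \mathbf r^k
\]
is nonnegative, depends only on $m^\EE_G(\mathbf r)$, vanishes at $\p$ because $\Omega\p^k=0$, and has vanishing gradient at $\p$ by the equilibrium equation \eqref{eqn:equilibrium}; so $\p$ minimizes $E$. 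If $m^\EE_G(\q)=m^\EE_G(\p)$ then $E(\q)=E(\p)=0$, forcing $\Omega\q^k=0$ for all $k$, i.e.\ $\q$ is an affine image of $\p$. Finally, an affine map preserving all edge lengths of a generically rigid framework at a generic configuration must be an isometry (otherwise the edges of $G$ would impose a nontrivial polynomial relation among the coordinates of $\p$), so $\q$ is congruent to $\p$. Taking contrapositives, if $G$ is not generically globally rigid then every generic $\p$ has shared stress kernel of dimension $>d+1$, which is the second bullet.

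\textbf{Setup for the first bullet.}
Conversely, I will show the contrapositive: if every generic $\p$ has shared stress kernel of dimension $>d+1$, then $G$ is not generically globally rigid. If $G$ is not even generically rigid in $\RR^d$ this is automatic, so by Theorem \ref{thm:glr} we may assume $(G,\p)$ is infinitesimally rigid at generic $\p$; then $\p$ has $d$-dimensional affine span (Lemma \ref{lem:not-flat}) and the rigidity matrix has rank $nd-\binom{d+1}{2}$. By hypothesis the shared stress kernel of generic $\p$ strictly contains $\lin\{\mathbf 1,\p^1,\dots,\p^d\}$; choose $\z\in\RR^n$ in the shared stress kernel but independent of those $d+1$ vectors, and form the lift $\hat\p$ in $\RR^{d+1}$ with $\hat\p_i=(\p_i,z_i)$. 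The last coordinate of the equilibrium equation \eqref{eqn:equilibrium} for $(G,\hat\p)$ reads $\Omega\z=0$, which holds automatically because $\z$ is in the shared stress kernel; hence $(G,\hat\p)$ in $\RR^{d+1}$ has exactly the same space of equilibrium stresses as $(G,\p)$ in $\RR^d$, and therefore the same rigidity-matrix rank $nd-\binom{d+1}{2}$. This is $n-d-1$ less than $n(d+1)-\binom{d+2}{2}$, the rank required for infinitesimal rigidity in $\RR^{d+1}$, and $n-d-1>0$ since $n\ge d+2$; as $\hat\p$ genuinely spans $\RR^{d+1}$ (because $\z$ is independent of $\lin\{\mathbf 1,\p^1,\dots,\p^d\}$), it follows that $(G,\hat\p)$ carries an $(n-d-1)$-dimensional space of nontrivial infinitesimal flexes in $\RR^{d+1}$.

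\textbf{The main obstacle.}
What remains -- and this is the technical core of \cite{ght}, where genericity of $\p$ does the real work -- is to turn the infinitesimal flexibility of $(G,\hat\p)$ ``one dimension up'' into an honest non-congruent configuration $\q$ in $\RR^d$ with $m^\EE_G(\q)=m^\EE_G(\p)$. The plan is: study the fiber of the $(d+1)$-dimensional measurement map through $\hat\p$ and the measurement variety of $G$; show that the first-order flex integrates to an analytic path $\hat\p(t)=(\p(t),\z(t))$ of $G$-equivalent configurations in $\RR^{d+1}$ (the obstructions to integrating to higher order are governed by the common equilibrium stresses, and I would exploit the special role of $\z$ in the shared kernel to control them); and then extract from this path a configuration $\q$ in $\RR^d$ that is $G$-equivalent to $\p$ but not congruent, using genericity of $\p$ to rule out the degenerate possibility that every configuration produced this way is merely an isometric copy of $\p$. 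A non-congruent equivalent $\q$ for generic $\p$ contradicts generic global rigidity, completing the converse and the theorem. I expect essentially all the difficulty to be concentrated in this last step: everything before it is the classical energy argument of \cite{conGR} together with rank bookkeeping around the rigidity matrix.
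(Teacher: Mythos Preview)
The paper does not prove Theorem \ref{thm:sharedKernel}; it is quoted as a black box from \cite{ght} (and \cite{conGR} for the second direction), so there is no ``paper's own proof'' to compare against. Your write-up is therefore a sketch of the arguments in those references, and I will comment on it as such.

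For the second bullet, your energy argument has a real gap. The step ``upgrade the hypothesis to a positive semidefinite equilibrium stress matrix $\Omega$ of rank $n-d-1$ \dots\ this is where genericity of $\p$ is essential'' is unjustified: genericity of $\p$ does \emph{not} by itself produce a PSD equilibrium stress of maximal rank, and your minimization step ``$E(\q)=0$ forces $\Omega\q^k=0$'' fails without PSD. This is precisely the point of the strengthening in \cite[Section 4.2]{ght}: one dispenses with PSD altogether. The key lemma is that if $\omega$ is an equilibrium stress for $(G,\p)$ and $m^\EE_G(\q)=m^\EE_G(\p)$, then $\omega$ is also an equilibrium stress for $(G,\q)$. (Sketch: along the segment $\r(t)=(1-t)\p+t\q$, the function $E_\omega(\r(t))$ is quadratic in $t$; equal edge lengths give $E_\omega(\p)=E_\omega(\q)$, and the equilibrium condition at $\p$ kills the derivative at $t=0$; together these force $E_\omega(\r(t))$ to be constant, so its derivative vanishes at $t=1$ as well.) With this, every equilibrium stress of $(G,\p)$ annihilates $\q$, so $\q$ lies in the shared stress kernel and is an affine image of $\p$; your final ``no conic through the edge directions at a generic $\p$'' step then finishes correctly.

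For the first bullet, your setup --- lift $\p$ to $\hat\p\in\RR^{d+1}$ using a vector $\z$ in the shared stress kernel, deduce that $(G,\hat\p)$ has the same stress space and hence is infinitesimally flexible in $\RR^{d+1}$ --- is the right opening picture, and you correctly flag that converting this into an honest non-congruent $\q$ in $\RR^d$ is where all the content lies. Your proposed mechanism (integrate the first-order flex to a path and project) is not how \cite{ght} actually proceeds, and that route is known to be obstructed in general; the argument in \cite{ght} instead analyzes the measurement variety and its Gauss map and uses a degree/parity count. Since you explicitly defer this step to \cite{ght}, that is consistent with how the present paper treats the theorem, but your outlined plan for it should not be read as a viable proof strategy.
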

\begin{remark}
\label{rem:shared}
From general principles about matrices and rank, 
if one generic framework  has 
an equilibrium stress matrix of rank $n-d-1$, then so
too must all generic frameworks
(see~\cite[Theorem 2.5]{henMol}
and~\cite[Lemma 5.8]{ght}). This also implies that
every complex generic framework also must have 
an equilibrium stress matrix of rank $n-d-1$.
\end{remark}

\section{Measurement Variety}
\label{sec:mv}

In this section, we define the measurement variety and
reduce Theorem~\ref{thm:main} to a statement about measurement varieties. 

From here on out, (unless where explicitly stated)
we move the complex setting, where $\p$ is a configuration of 
$n$ points in~$\CC^d$. This will allow us to apply basic machinery from algebraic geometry to 
our problem. Unless stated otherwise, we will always be
dealing with the Zariski topology, 
where the closed sets are
the algebraic subsets, and Zariski open subsets are obtained by
removing a subvariety from a variety.

\begin{definition}
Let $d$ be some fixed dimension and $n$ a number of vertices.
Let $G:= \{E_1,\ldots, E_m\}$ be  an ordered graph.
The ordering on the edges of $G$
fixes  an association between each edge in $G$ 
and a coordinate axis of $\CC^{m}$. 
Let $m_G(\p)$ be the map from $d$-dimensional 
configuration space to $\CC^{m}$
measuring the squared lengths of the edges
of $G$. 

The \defn{complex squared length} on edge $ij$ is 
\ba
m_{ij}(\p) := \sum_{k=1}^{d}(\p^k_i-\p^k_j)^2
\ea
where $k$ indexes over the $d$ dimension-coordinates. Here, we measure
complex squared length using the complex square operation with no
conjugation.

We denote by $M_{d,G}$ the 
closure of 
the image of 
of $m_G(\cdot)$ over all $d$-dimensional configurations.
This is an algebraic set,
defined over $\QQ$.
We call this the (squared) \defn{measurement variety}
of $G$ (in $d$ dimensions).

As the closure of the image of an irreducible set 
(configuration space), 
under a 
polynomial map, the variety $M_{d,G}$
is irreducible. As $M_{d,G}$ contains all scales of all of its points,
the variety is homogeneous.
\end{definition}

In the complex setting, using the above definition for complex squared 
length, 
we can also define the concepts
of congruence and
infinitesimal/local/global rigidity in $\CC^d$.
Importantly, as described in the following result, 
moving to the complex setting will maintain the rigidity properties relevant to us.  Thus, 
we may simply talk about ``rigidity 
in $d$ dimensions'', without specifying $\RR^d$ or $\CC^d$.

\begin{theorem}
A graph $G$ is generically  infinitesimally/locally/globally rigid 
in $\RR^d$ iff it is so in $\CC^d$.
\end{theorem}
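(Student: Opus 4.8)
The plan is to prove each of the three equivalences (infinitesimal, local, global) by the same general strategy: rigidity properties are governed by the rank of a polynomial map (or a related algebraic condition) evaluated at a point, and a generic real point and a generic complex point see the \emph{same} ranks because these ranks are computed by the non-vanishing of polynomials with rational coefficients. First I would dispatch the infinitesimal case. The differential of $m_G$ at $\p$ is the rigidity matrix, whose entries are polynomials in $\p$ with rational coefficients; infinitesimal rigidity is the condition that this matrix has rank exactly $nd - \binom{d+1}{2}$ (using $n \ge d+1$). The set of $\p$ where the rank is at least this value is the complement of the common zero locus of the relevant maximal minors, i.e.\ a Zariski-open set defined over $\QQ$. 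By Theorem~\ref{thm:glr} (and its complex analogue, which holds by the same linear-algebra argument), $G$ is generically infinitesimally rigid in $\RR^d$ iff this open set is nonempty over $\RR$, iff some minor is a nonzero polynomial, iff the open set is nonempty over $\CC$, iff $G$ is generically infinitesimally rigid in $\CC^d$. The key point is that a polynomial over $\QQ$ vanishes identically on $\RR^d$ iff it vanishes identically on $\CC^d$ iff it is the zero polynomial; equivalently, a real generic configuration and a complex generic configuration both avoid every proper $\QQ$-subvariety.

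Next I would handle local rigidity. Here I would invoke Theorem~\ref{thm:glr}: over $\RR$, generic local rigidity is equivalent to generic infinitesimal rigidity, and I would note (citing or re-running the argument of~\cite{asimow}) that the same dimension-counting argument — comparing the local dimension of the fiber $(m_G^\EE)^{-1}(m_G^\EE(\p))$ at a generic point to $\binom{d+1}{2}$ — works verbatim over $\CC$, because the relevant notions (dimension of a variety, generic smoothness, the fact that the generic fiber dimension equals $nd$ minus the dimension of the image) are algebraic and field-independent for varieties defined over $\QQ$. So generic local rigidity in $\CC^d$ is also equivalent to generic infinitesimal rigidity in $\CC^d$, and the infinitesimal case just proved closes the loop.

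Finally, the global case, which I expect to be the main obstacle since it is the one place where the real and complex stories genuinely differ a priori. Here the plan is to use Theorem~\ref{thm:sharedKernel} together with Remark~\ref{rem:shared}. The dimension of the shared stress kernel of $(G,\p)$ is again governed by the rank of a matrix whose entries depend polynomially (with rational coefficients) on $\p$ — the space of equilibrium stresses is the kernel of a matrix built from $\p$, and one can assemble a single large matrix whose rank controls the shared kernel dimension — so the condition ``the shared stress kernel has dimension $d+1$'' is Zariski-open and defined over $\QQ$. By Theorem~\ref{thm:sharedKernel}, $G$ is generically globally rigid in $\RR^d$ iff this open set is nonempty over $\RR$, which (as in the infinitesimal case) happens iff the defining minor is a nonzero polynomial iff the open set is nonempty over $\CC$. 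Remark~\ref{rem:shared} already records that if one generic framework has an equilibrium stress matrix of rank $n-d-1$ then every generic complex framework does too; combined with the complex version of Theorem~\ref{thm:sharedKernel} (whose forward direction is Connelly's sufficient condition, valid over $\CC$, and whose reverse direction is the stronger statement of~\cite{ght} that also applies), this yields: $G$ generically globally rigid in $\RR^d$ $\iff$ some generic framework has a rank-$(n-d-1)$ stress matrix $\iff$ $G$ generically globally rigid in $\CC^d$. The one subtlety to be careful about is that global rigidity over $\CC$ is \emph{not} literally ``small shared stress kernel'' by definition — rather, Theorem~\ref{thm:sharedKernel} supplies the equivalence in both directions, so I must make sure the cited results from~\cite{ght} are stated (or provable) in the complex setting; the excerpt's Remark~\ref{rem:shared} strongly suggests they are, and this is where I would spend the most care in the write-up. $\hfill\qed$
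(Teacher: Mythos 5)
Your treatment of the infinitesimal and local cases coincides with the paper's proof sketch: the rank of the rigidity matrix is controlled by minors with rational coefficients, a nonzero rational polynomial vanishes at a real generic iff it vanishes at a complex generic iff it is the zero polynomial, and then Theorem~\ref{thm:glr} together with its complex analogue converts the infinitesimal statement into the local one. That part is fine and is exactly the argument the authors give.

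The global case is where you have a genuine gap, and it is worth being precise about its location. Your plan reduces the question to three steps: (i) $G$ is generically globally rigid in $\RR^d$ iff a generic real configuration has shared stress kernel of dimension exactly $d+1$ (Theorem~\ref{thm:sharedKernel}); (ii) this rank condition, being defined over $\QQ$, has the same truth value at a generic real and at a generic complex configuration; (iii) a generic complex configuration having shared stress kernel of dimension $d+1$ is equivalent to $G$ being generically globally rigid in $\CC^d$. Step (iii) is not a verification you can do by inspecting~\cite{ght} — it is a substantial theorem in its own right, and establishing it is precisely what the reference~\cite{cggr} (which the paper cites for the global case and which you never reach for) does. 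Both directions are nontrivial over $\CC$: the sufficient direction is an algebraic replacement for Connelly's energy argument, and the necessary direction of~\cite{ght} requires re-doing the analysis of the measurement map and its fibers over $\CC$. Also note that Remark~\ref{rem:shared}, which you lean on, only records that the max-rank-stress condition is consistent across complex generic frameworks — it does \emph{not} assert that this condition characterizes generic global rigidity over $\CC^d$, which is exactly what step (iii) needs. So your proposal is on the right track structurally, but you should either cite~\cite{cggr} outright (as the paper does) or recognize that you are signing up to re-prove its main theorem, not to check a routine field extension.
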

The case of generic global rigidity is proven in~\cite{cggr}.  
One 
direction of generic local rigidity is in \cite{ST10}.  
For 
completeness, here we sketch a proof of the equivalence for generic 
infinitesimal and generic local rigidity.
\begin{proof}
First, we note that a generic real
configuration in $\RR^d$ is also generic as a 
complex configuration.  

Secondly, the proof of Theorem \ref{thm:glr} in~\cite{asimow}, which equates
generic infinitesimal rigidity to generic local rigidity, 
equally applies to the complex setting.

Finally, the rank of the rigidity matrix does not change when enlarging the
field from $\RR$ to $\CC$
(because the determinant is defined over $\ZZ$), and so infinitesimal 
rigidity of a real generic
$(G,\p)$ will be the same in both fields.  By the complex version
of Theorem~\ref{thm:glr}, generic local rigidity as well.
\end{proof}

\begin{lemma}
\label{lem:infrig}
If $G$ is generically locally rigid in $\CC^d$, 
then the image of $m_G(\cdot)$ acting on all configurations
is $dn-\binom{d+1}{2}$-dimensional. Otherwise, 
the dimension of the image is smaller.
\end{lemma}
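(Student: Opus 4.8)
The plan is to identify the dimension of the image of $m_G(\cdot)$ with the generic rank of its differential, and then to read off that rank from the infinitesimal rigidity of a generic framework.

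First I would use the standard fact — valid here since we work over $\CC$, hence in characteristic zero — that for a morphism $f\colon X \to Y$ of varieties with $X$ irreducible, $\dim \overline{f(X)}$ equals the rank of the differential $df$ at a generic point of $X$; equivalently, the generic rank of the Jacobian of a polynomial map is the dimension of the closure of its image. Applied to $f = m_G$ on the irreducible configuration space $\CC^{dn}$, this gives that $\dim M_{d,G}$, which is also the dimension of the image of $m_G$, equals $\rank (dm_G)_\p$ for any generic $\p$. So it suffices to compute this generic rank.

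Next I would express the rank as a flex count. The kernel of $(dm_G)_\p$ is exactly the space of infinitesimal flexes of $(G,\p)$, since the derivative of $m_{ij}$ in the direction $\p'$ is $2(\p_i - \p_j)\cdot(\p'_i - \p'_j)$. For generic $\p$ the configuration has full $d$-dimensional affine span (using $n \ge d+1$), so this flex space always contains the $\binom{d+1}{2}$-dimensional space of trivial flexes; therefore $\rank (dm_G)_\p \le dn - \binom{d+1}{2}$, with equality precisely when $(G,\p)$ is infinitesimally rigid. (This is the complex analogue of the rank characterization of infinitesimal rigidity recalled earlier, and it holds over $\CC$ because the orbit of the trivial motions still has dimension $\binom{d+1}{2}$.) Hence, for generic $\p$: if $(G,\p)$ is infinitesimally rigid then $\dim M_{d,G} = dn - \binom{d+1}{2}$, and if $(G,\p)$ is infinitesimally flexible then $\dim M_{d,G} < dn - \binom{d+1}{2}$.

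Finally I would pass from local to infinitesimal rigidity: by Theorem~\ref{thm:ir-lr} together with the complex version of Theorem~\ref{thm:glr} (noted above to hold verbatim over $\CC$), generic local rigidity in $\CC^d$ is equivalent to generic infinitesimal rigidity in $\CC^d$. So if $G$ is generically locally rigid then a generic $\p$ is infinitesimally rigid and the first case applies, whereas if $G$ is not generically locally rigid then it is not generically infinitesimally rigid, a generic $\p$ is infinitesimally flexible, and the second case applies. I do not expect a genuine obstacle in this lemma; the only point needing care is the algebro-geometric input of the second paragraph, which relies on characteristic zero (generic smoothness) and on the irreducibility of configuration space, both of which are available here.
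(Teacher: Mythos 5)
Your proof is correct and follows essentially the same route as the paper's: both identify the dimension of the image with the generic rank of $dm_G$, compute that rank via the kernel (infinitesimal flexes), and use Theorem~\ref{thm:glr} to pass between generic local and generic infinitesimal rigidity. The only stylistic difference is that you invoke the standard characteristic-zero fact ``dimension of image closure equals generic rank of the differential'' as a single black box, while the paper unpacks exactly that fact on the fly, deriving the lower bound from the constant rank theorem and the upper bound from Sard's theorem applied to the map restricted away from singular loci.
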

\begin{proof}[Proof sketch]
From Theorem~\ref{thm:glr}, 
if $G$ is generically locally rigid in $\CC^d$ then
it is generically infinitesimally rigid in $\CC^d$.  Thus 
the generic rank 
of the differential of $m_G(\cdot)$ is $dn-\binom{d+1}{2}$.
From 
the constant rank theorem (as used in~\cite[Proposition 2]{asimow}),
the dimension of the image of $m_G(\cdot)$ is at
least as big as the rank $r$ of the  differential at a generic $\p$.
This is the largest differential rank of $m_G(\cdot)$ over the domain.
Applying Sard's Theorem  to $m_G(\cdot)$ 
(once the non-smooth points of the image are removed, and then the preimages
of these non-smooth points are removed from the domain)
tells us that inverse image of some
(in fact, almost every) point
in the image consists entirely
of configurations $\p$, where the differential
has rank at least as big as the
the dimension of the image of $m_G(\cdot)$.
\end{proof}

The main theorem about measurement varieties we will prove in 
this paper is the following:

\begin{theorem}
\label{thm:iso}
Suppose that $d \ge 2$ (or suppose that  
$d = 1$ and G is $3$-connected).
Let $G$ and $H$ be 
ordered graphs, both with $n \ge d+2$ vertices
and $m$ edges.
Suppose 
$G$ is generically globally rigid in $d$ dimensions. 
Suppose $M_{d,G} =M_{d,H}$.
Then
there is a vertex relabeling under which $G=H$.
\end{theorem}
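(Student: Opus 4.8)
The plan is to recover the combinatorial structure of $G$ and $H$ from the geometry of the shared measurement variety $M := M_{d,G} = M_{d,H}$, using the fact that generic global rigidity of $G$ pins down exactly how the variety sits inside $\CC^m$. First I would use Lemma~\ref{lem:infrig} together with Theorem~\ref{thm:glr}: since generic global rigidity implies generic local rigidity, $M$ has dimension $dn - \binom{d+1}{2}$, and so $H$ is also generically locally rigid, hence generically infinitesimally rigid in $d$ dimensions. Next, the key idea: examine how $M$ meets the coordinate subspaces of $\CC^m$. Deleting an edge $e_i$ from $G$ corresponds to intersecting $M_{d,G}$ with (or projecting to) a coordinate hyperplane, and more generally the image of a subgraph's measurement map is a coordinate projection of $M$. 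The subtlety is that the ordering on edges of $G$ and of $H$ need not a priori match up, so I first need to produce the bijection on edges. I expect the right tool here is the \emph{local} structure of $M$ near a generic point $\v = m_G(\p)$: the tangent space $T_\v M$ is the row space of the rigidity matrix of $(G,\p)$, and the way this tangent space interacts with the coordinate directions of $\CC^m$ encodes which coordinate corresponds to which vertex pair. Concretely, for a generically globally rigid $G$, a generic fiber $m_G^{-1}(\v)$ is a single congruence class, and perturbing $\v$ in the $i$-th coordinate direction alone corresponds to flexing exactly the edge $e_i$; reading off the resulting first-order motion of $\p$ recovers the unordered pair $\{i,j\} = e_i$ up to the relabeling ambiguity. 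Doing the same for $H$ at a point $\q$ with $m_H(\q) = \v$ gives the edge set of $H$ in the same coordinates, and comparing yields a bijection between $E(G)$ and $E(H)$ that is consistent with the identification of coordinate axes.

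Once the edge bijection is in hand, I would argue that it is induced by a vertex relabeling. Here the natural approach is: the bijection on edges must respect incidences, because incidence structure is detectable from the geometry — two coordinates of $\CC^m$ correspond to edges sharing a vertex iff the corresponding pair of "single-edge flex" directions interact in a prescribed way (e.g., they can be simultaneously realized by a motion supported on three points rather than four). Translating "the edge bijection preserves the incidence graph" into "it comes from a vertex permutation" is where $3$-connectivity enters: a bijection of edges of a $3$-connected graph preserving adjacency of edges is induced by a unique vertex automorphism (this is a Whitney-type statement; low connectivity allows the classical $K_4$ vs.\ $K_{1,1,2}$ style exceptions, which is exactly why the $d=1$ case needs the hypothesis, while for $d \ge 2$ Theorem~\ref{thm:hen} supplies $3$-connectivity for free). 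So relabel $H$ by this vertex permutation; now $G$ and $H$ have the same edge set, i.e.\ $G = H$ as ordered graphs.

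The main obstacle I anticipate is making precise and rigorous the claim that "perturbing $\v$ in a single coordinate direction recovers which edge that coordinate is," i.e.\ extracting the combinatorial identity of an edge from purely variety-theoretic data about $M$. The clean way to do this is probably not to work pointwise but to characterize the edge set intrinsically: show that $M_{d,G}$ determines, for each subset $S$ of coordinates, whether the coordinate projection of $M$ to $\CC^S$ has the "expected" dimension $\min(|S|,\, dn - \binom{d+1}{2})$ or drops — and that the pattern of dimension drops over all $S$ recovers the matroid (generic rigidity matroid on the ground set $E$) and, more than that, the actual incidence structure, using that $G$ is rigid and $3$-connected. A cleaner variant: since $G$ is generically globally rigid, a generic point of $M$ has a $1$-element fiber mod congruence, so one can literally parametrize a neighborhood of $\v$ in $M$ by the configuration $\p$ (mod congruence) via an inverse of $m_G$, and then each coordinate function of $\CC^m$ pulls back to the explicit polynomial $m_{ij}(\p)$; identifying which $\{i,j\}$ this is, up to the global relabeling freedom, is then a matter of recognizing $m_{ij}$ among the $\binom n2$ squared-distance polynomials, which is immediate since these are pairwise distinct functions on a generic configuration. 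The delicate point is doing this for $H$ simultaneously in the \emph{same} coordinates and checking the two recovered configurations $\p,\q$ are congruent under the induced relabeling — but that last step should follow because $\v = m_G(\p) = m_H(\q)$ and, after relabeling, $G = H$ with $G$ generically globally rigid, so $\p$ and $\q$ are forced to be congruent (modulo checking $\q$ is generic enough, which one gets from $\p$ being generic and the fiber being finite).
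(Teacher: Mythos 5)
Your proposal has a genuine gap: you never establish the reduction to $d=1$, which is the heart of the paper's argument. Your cleanest variant---testing, for each coordinate subset, whether the coordinate projection of $M$ has full dimension, and thereby recovering the generic $d$-dimensional rigidity matroid---is sound, and indeed the paper does exactly this at $d=1$ (Lemmas~\ref{lem:ind} and~\ref{lem:cycle}). But you then invoke Whitney's theorem to pass from matroid to graph, and Whitney's theorem is about the \emph{graphic} matroid; there is no analogue for the $d$-dimensional generic rigidity matroid when $d\ge 2$. At $d=1$ the rigidity matroid \emph{is} the graphic matroid, and that is exactly why the paper devotes all of Section~\ref{sec:flat} to Proposition~\ref{prop:induct}: starting from $M_{d,G}=M_{d,H}$ it descends one dimension at a time to $M_{1,G}=M_{1,H}$, by intrinsically identifying the subvariety $M_{d-1,G}\subset M_{d,G}$ (points of $M_{d-1,G}$ lie in the closure of infinitely many generic Gauss fibers, Propositions~\ref{prop:mainA} and~\ref{prop:mainB}, whereas points in $M_{d,G}\setminus M_{d-1,G}$ lie in only finitely many), and only then applies Whitney in dimension one. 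Without this descent your argument rests on an unproved Whitney-type statement for higher-dimensional rigidity matroids.

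Your other routes to the edge bijection are also flawed. ``Perturbing $\v$ in the $i$-th coordinate direction alone'' does not stay on $M$: since $G$ is redundantly rigid (Theorem~\ref{thm:hen2}), the tangent space $T_\v M$ is a proper subspace of $\CC^m$, so a single-coordinate perturbation generically leaves the variety, and there is no ``resulting first-order motion of $\p$'' to read off. The ``recognize $m_{ij}$ among the squared-distance polynomials'' step is circular: parametrizing a neighborhood of $\v$ in $M$ by configurations requires choosing a local inverse of $m_G$ \emph{or} of $m_H$, and these two parametrizations a priori bear no relation to each other. The pullback of the $i$-th coordinate function is tautologically $m_{\{a,b\}}(\p)$ under the first and $m_{\{c,d\}}(\q)$ under the second, with no way to compare $\{a,b\}$ and $\{c,d\}$ except via exactly the conclusion you are trying to prove.
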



Assuming
Theorem \ref{thm:iso}, we are ready to prove our main 
result.

\begin{proof}[Proof of Theorem~\ref{thm:main}]
Lemma \ref{lem:infrig} implies that 
$M_{d,G}$ is an irreducible variety of dimension
$dn-\binom{d+1}{2}$.
Meanwhile, using Lemma \ref{lem:infrig} again,
$M_{d,H}$ is an irreducible variety of dimension 
$\le  dn-\binom{d+1}{2}$,
with equality if $H$ is generically locally rigid in $\CC^d$.
(It is here where we need that $H$ does not have more vertices
than $G$.)
The generic real configuration $\p$ is 
also generic
as a point in $\CC^{dn}$.
The point $\v\in \CC^m$ is, by assumption, 
in both $M_{d,G}$ and
$M_{d,H}$ and by Lemma~\ref{lem:genMap},
$\v$ is generic in $M_{d,G}$.
This implies that we must have  
$M_{d,G}  \subseteq M_{d,H}$, 
otherwise  $\v$ would be cut out from $M_{d,G}$
by the one of the equations defining $M_{d,H}$, and thus 
rendering $\v$ non-generic in $M_{d,G}$.
So $M_{d,H}$ must be of dimension at least
$dn-\binom{d+1}{2}$, and thus exactly
$dn-\binom{d+1}{2}$.

Since $M_{d,G}$ and $M_{d,H}$ have the same 
dimension and $M_{d,H}$ is irreducible, 
$M_{d,G}  \subseteq M_{d,H}$ implies that 
$M_{d,G} = M_{d,H}$.

Now we may apply 
Theorem~\ref{thm:iso} to conclude that there 
there is a vertex relabeling
such that $G=H$. Finally, from the assumption that
$G$ is generically globally rigid, we must have
$\p$ congruent to $\q$.
\end{proof}

With this settled, the next two sections 
develop the proof of Theorem~\ref{thm:iso}.
Briefly, the approach is by induction on 
dimension. This kind of induction 
was used in~\cite{loops} to obtain a new
proof of the result of Boutin and Kemper on complete graphs.
The base case, $d=1$, follows 
from a graph-theoretic result of Whitney
via a connection between cycle spaces of graphs and 
projections of $1$-dimensional measurement sets. 
This is done in Section \ref{sec:d=1}.  
The connection
between measurement varieties and Whitney's theorem was 
first explored
in the unpublished manuscript~\cite{miso}. The main results 
from~\cite{miso} are included in Section~\ref{sec:bonus} below.
The 
more difficult step is the inductive one,
which requires understanding the geometry of 
the measurement set $M_{d,G}$ well enough to 
identify the sub-variety corresponding to 
$M_{d-1,G}$ intrinsically.  That is the 
topic of the next section.

\section{Getting Down to $d=1$}\label{sec:flat}

In this section we will prove the following proposition:

\begin{proposition}
\label{prop:induct}
Let $G$, an ordered graph on $n \ge d+2$ vertices with $m$ edges,
be generically globally rigid in $\CC^d$
and let $H$ be some ordered graph 
on $n$ vertices
with $m$ edges.
Suppose   $M_{d,G} = M_{d,H}$.
Then 
$M_{d-1,G} = M_{d-1,H}$ and, by induction,
$M_{1,G} = M_{1,H}$. 
\end{proposition}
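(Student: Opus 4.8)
The plan is to recover $M_{d-1,G}$ from $M_{d,G}$ in a way that depends only on the variety $M_{d,G}$ itself, not on $G$; then since $M_{d,G} = M_{d,H}$, the same intrinsic construction applied to $M_{d,H}$ yields both $M_{d-1,G}$ and $M_{d-1,H}$, forcing them to coincide. The key geometric observation should be that a configuration $\p$ in $\CC^d$ lying in a $(d-1)$-dimensional affine subspace is exactly a configuration that, as a point of $d$-space, is ``degenerate,'' and the squared-length map on such configurations agrees with $m_G$ computed in dimension $d-1$. So $M_{d-1,G}$ sits inside $M_{d,G}$ as the image of the locus of flat configurations. The problem is that this locus, described that way, refers to $G$ and to configuration space; I need to characterize its image $M_{d-1,G}$ purely in terms of the variety $M_{d,G}$.

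The natural handle is dimension together with the stress-matrix/shared-stress-kernel machinery from Theorem~\ref{thm:sharedKernel} and Remark~\ref{rem:shared}. First I would record that $M_{d-1,G} \subseteq M_{d,G}$ (flat configurations are configurations) and that, by Lemma~\ref{lem:infrig} applied in dimension $d-1$, $\dim M_{d-1,G} = (d-1)n - \binom{d}{2}$ when $G$ is generically locally rigid in $\CC^{d-1}$ — which it is, since generic global rigidity in $\CC^d$ descends to $\CC^{d-1}$ by Lemma~\ref{lem:descend}, hence so does generic local rigidity. Thus $M_{d-1,G}$ is an irreducible subvariety of $M_{d,G}$ of the predicted codimension $n - d + 1$. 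The crucial point I would then try to establish is that $M_{d-1,G}$ is characterized inside $M_{d,G}$ by a rank/stress condition: a generic point $\v$ of $M_{d,G}$ has shared stress kernel of dimension exactly $d+1$ (this is Theorem~\ref{thm:sharedKernel}, since $G$ is generically globally rigid in $\CC^d$), and I expect that the points of $M_{d-1,G}$ are precisely (a dense subset of) the points of $M_{d,G}$ whose fiber configurations have a stress kernel of dimension $\ge d+2$ — because a flat configuration in $\CC^d$ automatically picks up an extra stress (the stress matrix then has rank $\le n-d-1$ on a configuration whose affine span is only $d-1$). Equivalently, $M_{d-1,G}$ should be the ``extra-degenerate'' locus where the stress matrix associated to the measurement drops rank below $n-d-1$. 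Since the space of equilibrium stresses at $\v$, the stress matrices, and their ranks are all computable from $\v$ alone (the stress space is the kernel of a matrix whose entries depend only on $\v$, via the quadratic equations defining $M_{d,G}$), this degeneracy locus is intrinsic to $M_{d,G}$.

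Having identified $M_{d-1,G}$ intrinsically, I would argue: the intrinsic construction applied to $M_{d,H} = M_{d,G}$ produces a subvariety $Z$ with $Z = M_{d-1,G}$; but $Z$ must also equal $M_{d-1,H}$ by the same reasoning applied to $H$ — here I need that $H$ is also generically globally rigid in $\CC^d$, which follows because $M_{d,H} = M_{d,G}$ has the maximal dimension $dn - \binom{d+1}{2}$ so $H$ is generically locally rigid, and then the shared-stress-kernel dimension being $d+1$ at the generic point of $M_{d,H}$ (inherited from $M_{d,G}$) forces $H$ generically globally rigid by the contrapositive of the second half of Theorem~\ref{thm:sharedKernel}. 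So $M_{d-1,G} = M_{d-1,H}$, and iterating down to $d=1$ gives $M_{1,G} = M_{1,H}$, using at each stage that generic global rigidity descends (Lemma~\ref{lem:descend}) so the hypotheses of the induction are maintained.

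The main obstacle, I expect, is making the ``intrinsic characterization of $M_{d-1,G}$ inside $M_{d,G}$'' precise and correct: one must show both inclusions — that every flat-configuration measurement really does exhibit the extra stress-kernel dimension (the easy direction, essentially Definition~\ref{def:stress}'s rank bound applied with affine span $d-1$), and, more delicately, that a generic point of $M_{d,G}$ whose measurement admits a stress matrix of rank $< n-d-1$ must actually come from a flat configuration and lie in $M_{d-1,G}$ — rather than from some other degeneracy. Controlling the fibers of $m_G$ over this locus, and ruling out spurious components of the rank-drop locus that are not $M_{d-1,G}$, is where the real work lies; dimension counting ($M_{d-1,G}$ has exactly the codimension one predicts, so any strictly larger degeneracy locus would have to be all of $M_{d,G}$, contradicting genericity of the stress rank) is the tool I would lean on to close this gap.
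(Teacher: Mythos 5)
Your global strategy matches the paper's: find an intrinsic (graph-independent) characterization of $M_{d-1,G}$ as a subvariety of $M_{d,G}$, transfer it across the equality $M_{d,G}=M_{d,H}$, verify that $H$ inherits generic global rigidity so the argument can be run symmetrically and then iterated down. Your argument that $H$ is also generically globally rigid --- via the dimension of $M_{d,H}$ plus the shared stress kernel dimension at a generic point --- is exactly the paper's Lemma~\ref{lem:both}. Where you diverge is in the mechanism for the intrinsic characterization, and that mechanism, as stated, does not work.

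You propose to identify $M_{d-1,G}$ inside $M_{d,G}$ as the locus where the shared stress kernel has dimension $\ge d+2$, equivalently where equilibrium stress matrices drop rank below $n-d-1$. The monotonicity here is reversed. A configuration $\q$ in $\CC^d$ with $(d-1)$-dimensional affine span satisfies \emph{more} equilibrium stress constraints (there is one fewer coordinate vector that $\Omega$ must annihilate), so the equilibrium stress space gets \emph{larger}, and the shared stress kernel --- the intersection of kernels over a larger family of $\Omega$ --- gets \emph{smaller}, not larger. Concretely, for $\q$ generic in dimension $d-1$, applying Theorem~\ref{thm:sharedKernel} one dimension down gives a stress matrix of rank $n-d$ (higher than $n-d-1$) and a shared stress kernel of dimension $d$ (lower than $d+1$). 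So the ``extra degeneracy'' you want to detect is not a drop in stress rank. Separately, even if you fixed the direction, your claim that the stress space ``is computable from $\v$ alone'' relies on the identification of equilibrium stresses with tangential hyperplanes (Lemma~\ref{lem:stTan}), which holds only at smooth points; the paper shows that every point of $M_{d-1,G}$ lies in the singular locus of $M_{d,G}$, precisely where a single well-defined tangent space (and hence an intrinsic stress space) does not exist.

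The paper avoids both problems by using a genuinely geometric invariant: the number of \emph{generic Gauss fiber closures} through a point. Proposition~\ref{prop:mainA} shows that a point of $M_{d,G}\setminus M_{d-1,G}$ lies in at most finitely many such closures (its preimages with full span are forced to be locally rigid, and Lemma~\ref{lem:ccomp} caps the number of locally rigid preimages up to congruence), while Proposition~\ref{prop:mainB} shows that a generic point of $M_{d-1,G}$ lies in infinitely many (a flat configuration is in the closure of infinitely many distinct generic affine classes). This dichotomy is intrinsic, sidesteps the singularity issue, and --- via the correspondence in Proposition~\ref{prop:GFs-bij-to-ACs} between generic Gauss fibers and generic affine classes, which is where stresses actually enter --- is the correct replacement for the rank condition you were reaching for.
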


The basic strategy is to show that points $\x$ in $M_{d,G}\setminus M_{d-1,G}$ look intrinsically different
in $M_{d,G}$ than points $\y$ of $M_{d,G}$ that are also in $M_{d-1,G}$. This means that these cases can be 
distinguished
from the variety alone, without knowing the generating graph $G$. 
We will not simply be able to use smoothness as the distinguishing factor
as there can be points in $M_{d,G} \setminus M_{d-1,G}$ that are not
smooth.
Our characterization will
involve looking at Gauss fibers in $M_{d,G}$ in the neighborhood around such points.
Luckily, from results in~\cite{conGR,ght}, we have a reasonable understanding of these Gauss fibers
(at least generically)
and how they relate to equilibrium stresses of $(G,\p)$ and affine transformations of $\p$. 
The key distinguishing features of these points
are described in Propositions~\ref{prop:mainA} and~\ref{prop:mainB}
below. 
The geometry that distinguishes points 
in $M_{d,G}$ that are also in $M_{d-1,G}$ is illustrated schematically 
in 
Figure~\ref{fig:cone-cyl}.

In what follows, we will make the formal argument as weak as possible,
only focusing on generic points, 
but we will also add remarks as we go along, with stronger statements
for geometric intuition.

\begin{figure}[h!]
	\centering
	\subfloat[]{\adjincludegraphics[height=0.25\textheight,trim={0 {0.25\height} 0 {0.25\height}},clip]{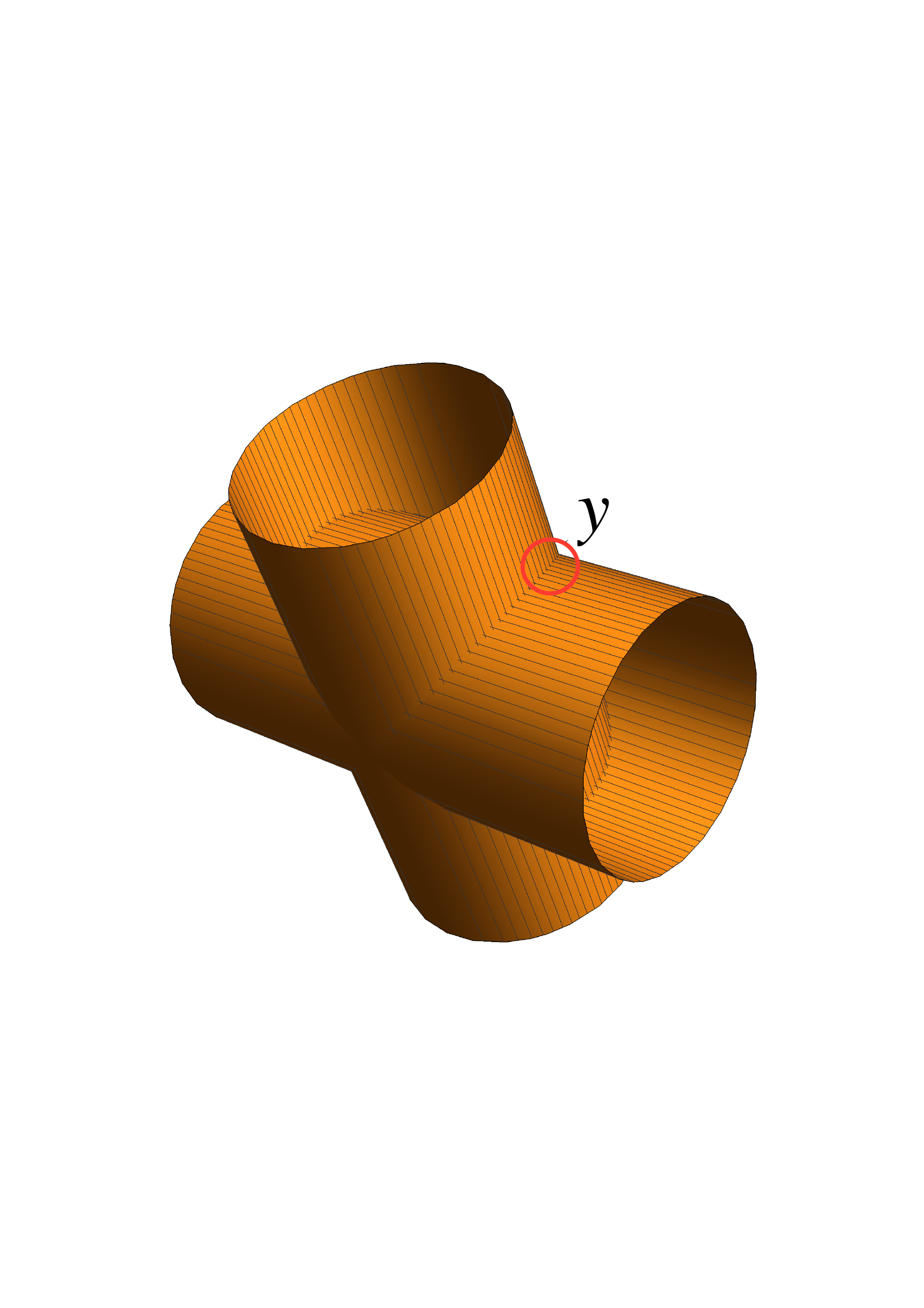}}
    \subfloat[]{\includegraphics[height=0.28\textheight]{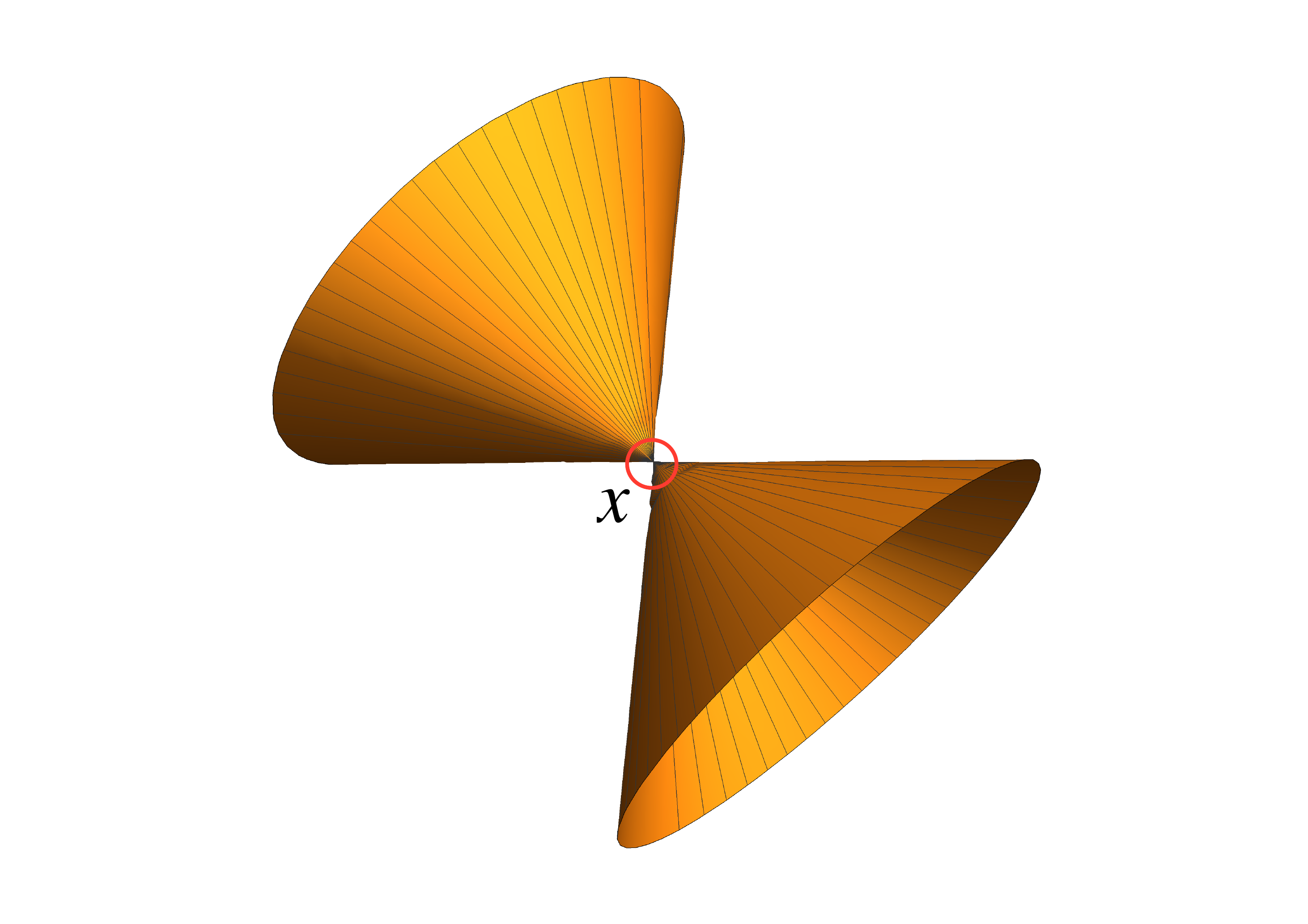}}
    
\caption{Two types of singular points on ruled varieties.  (a) 
The Gauss fibers on a variety consisting of two intersecting cylinders consist of the ruling lines indicated.  Points in the 
intersection of the two cylinders, such as the one marked $y$ 
are in the singular locus, but still lie in the closure of a finite number of 
generic Gauss fibers (in this case, one ruling line 
from each cylinder).  
Proposition \ref{prop:mainA} says that measurements $y$ that arise
from configurations with full spans are either smooth points (in a single
fiber closure)
or 
lie in the closure of a finite number of 
generic Gauss fibers as in this figure.
(b) The Gauss fibers on the elliptic cone
also consist of ruling lines, as indicated.  The cone point,
marked as $x$, lies in the closure of an infinite number of 
ruling lines.  This 
is a different situation than we saw (for $y$) in (a).
Proposition \ref{prop:mainB}
says that measurements $x$ that arise
from configurations with deficient spans 
lie in the closure of an infinite number of 
generic Gauss fibers as in this figure.
\label{fig:cone-cyl}}
\end{figure}

\begin{lemma}
\label{lem:sard}
Let $G$ be generically locally rigid in $\CC^d$,
with $n \ge d+1$ vertices.
Suppose $(G,\p)$ is an infinitesimally flexible framework. Then the point $\x:=m_G(\p)$
is not generic in $M_{d,G}$.

In particular, if $\p$ has deficient affine span, then $m_G(\p)$ is non-generic.
\end{lemma}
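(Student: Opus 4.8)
The plan is to reduce both assertions to a dimension count via Lemma~\ref{lem:infrig} and the constant-rank/Sard argument sketched in its proof. Since $G$ is generically locally rigid in $\CC^d$ with $n \ge d+1$ vertices, Lemma~\ref{lem:infrig} tells us that $M_{d,G}$ has dimension exactly $r := dn - \binom{d+1}{2}$, and moreover that this is the maximal rank of the differential $d m_G$ over all configurations. The key point, which comes straight out of the proof of Lemma~\ref{lem:infrig}: after removing the (proper) subvariety of non-smooth points of $M_{d,G}$, and then removing from configuration space the preimage of that subvariety, Sard's theorem says that over a Zariski-dense open subset $U$ of the smooth locus of $M_{d,G}$, every preimage configuration $\p$ has $\mathrm{rank}\, d m_G(\p) = r$. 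In other words, if $\x = m_G(\p)$ is generic in $M_{d,G}$, then $\x \in U$, hence $(G,\p)$ is infinitesimally rigid (its rigidity matrix / differential has the top rank $r$).

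Contrapositively: if $(G,\p)$ is infinitesimally flexible, then $\mathrm{rank}\, d m_G(\p) < r$, so $m_G(\p) \notin U$, so $m_G(\p)$ lies in the complement of a Zariski-dense open set of $M_{d,G}$, i.e. in a proper subvariety. A proper subvariety of an irreducible variety defined over $\QQ$ (here $M_{d,G}$ is irreducible and defined over $\QQ$, and the bad locus is cut out over $\QQ$ by the vanishing of the appropriate minors of $d m_G$) contains no $\QQ$-generic point. Hence $\x = m_G(\p)$ is not generic in $M_{d,G}$. I would want to be slightly careful that the exceptional set is genuinely defined over $\QQ$: it is the image (or rather, I should argue via the complement) — cleanest is to observe that $\{\p : \mathrm{rank}\, d m_G(\p) < r\}$ is a $\QQ$-subvariety $Z$ of configuration space, that $m_G(Z)$ is constructible and contained in a proper $\QQ$-subvariety of $M_{d,G}$ (proper, since its generic fiber dimension forces $\dim \overline{m_G(Z)} < r$ by the same constant-rank bound applied on $Z$), and a $\QQ$-point of $M_{d,G}$ generic over $\QQ$ avoids it.

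**The ``in particular'' clause.** For the last sentence, suppose $\p$ has affine span of dimension $e < d$. By Lemma~\ref{lem:not-flat}, an infinitesimally rigid framework on $n \ge d+1$ vertices in $\CC^d$ must have full $d$-dimensional affine span; so any $\p$ with deficient span is automatically infinitesimally flexible. Then the first part applies directly and $m_G(\p)$ is non-generic. (One can note the stronger fact that such $\p$ land in $M_{e,G} \subsetneq M_{d,G}$, a proper $\QQ$-subvariety by the strict dimension inequality in Lemma~\ref{lem:infrig} — but this is not needed.)

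**Main obstacle.** The only real subtlety is the rationality/genericity bookkeeping: making sure the ``bad'' locus in $M_{d,G}$ that a generic point must avoid is actually a proper subvariety defined over $\QQ$, rather than merely a measure-zero or Zariski-closed subset over $\CC$. This is handled by keeping everything in terms of $\QQ$-varieties — $M_{d,G}$ is defined over $\QQ$, the minor-rank conditions on the polynomial map $m_G$ are defined over $\QQ$, and a configuration generic over $\QQ$ maps to a point generic over $\QQ$ in the irreducible $\QQ$-variety $M_{d,G}$ (this last is essentially Lemma~\ref{lem:genMap}, referenced elsewhere in the paper). Everything else is the constant-rank theorem plus Sard, exactly as invoked in the proof sketch of Lemma~\ref{lem:infrig}.
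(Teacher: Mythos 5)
Your proof is correct and follows essentially the same route as the paper's: restrict away the non-smooth points of $M_{d,G}$ and their preimages, apply (algebraic) Sard's theorem to conclude that the critical values form a lower-dimensional constructible set defined over $\QQ$, hence avoided by any generic point, and then dispatch the ``in particular'' clause via Lemma~\ref{lem:not-flat}. The extra care you take over the $\QQ$-rationality of the bad locus is a detail the paper's sketch glosses over, but it is the same underlying argument.
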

\begin{proof}[Proof sketch]
From Theorem~\ref{thm:glr}, $G$ is generically locally rigid
iff it is generically infinitesimally rigid iff the generic dimension
of the differential of $m_G(\cdot)$ is $dn-\binom{d+1}{2}$.

If $\x$ is not a smooth point of $M_{d,G}$ then it cannot
be generic and we are done.

Next we restrict the map $m_G(\cdot)$ 
by removing the non-smooth points from $M_{d,G}$
and then removing the preimages
of these non-smooth points from the domain.
By assumption, the configuration $\p$ is not a regular point of $m_G(\cdot)$, making $\x$ not a regular value
of its image. 

But from Sard's theorem
applied to $m_G(\cdot)$,
the 
set of critical values is  of lower dimension. 
This set is also constructible and defined over $\QQ$.
This set remains of lower dimension under
closure, 
thus the critical values must satisfy some extra equation,
making them non-generic.

By Lemma \ref{lem:not-flat}, any $(G,\p)$ with deficient affine 
span is infinitesimally flexible when $G$ has at least $d+1$ 
vertices, giving the second part of the lemma.
\end{proof}

\begin{definition}
Fix $d$ and $G$.
We say that $\x$ is an \defn{unhit} point of 
$M_{d,G}$ if there is no configuration $\p$ such that
$\x = m_G(\p)$. Otherwise it is \defn{hit}.
\end{definition}

\begin{lemma}
\label{lem:genhit}
Let $G$ be an ordered graph on $n\ge d+1$ vertices that is
generically locally rigid in $\CC^d$.
Let $\x$ be generic in $M_{d,G}$. Then 
$\x$ is hit. Moreover, any configuration $\p$ hitting $\x$
is infinitesimally rigid and has full affine span.
\end{lemma}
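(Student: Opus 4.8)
The plan is to derive both claims quickly, the first from Chevalley's theorem and the second from Lemma~\ref{lem:sard}.

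\textbf{The ``hit'' claim.} I would first observe that the image $I := m_G(\CC^{dn})$ is a constructible subset of $\CC^m$ by Chevalley's theorem, and that since $m_G(\cdot)$ is a morphism defined over $\QQ$, the set $I$ admits a constructible description using only polynomials with rational coefficients. By definition $M_{d,G} = \overline{I}$ is irreducible, so the standard fact that a constructible set which is dense in an irreducible variety contains a nonempty Zariski-open subset of it produces a nonempty open $U \subseteq M_{d,G}$, which can be taken defined over $\QQ$, with $U \subseteq I$. Then $M_{d,G} \setminus U$ is a proper subvariety of $M_{d,G}$ defined over $\QQ$; a point $\x$ that is generic in $M_{d,G}$ cannot lie on it, so $\x \in U \subseteq I$, i.e.\ $\x$ is hit.

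\textbf{Properties of a hitting configuration.} Now let $\p$ be any configuration with $m_G(\p) = \x$. Since $G$ is generically locally rigid in $\CC^d$ and $n \ge d+1$, Lemma~\ref{lem:sard} applies to $(G,\p)$: were $(G,\p)$ infinitesimally flexible, the point $m_G(\p) = \x$ would be non-generic in $M_{d,G}$, contradicting the hypothesis on $\x$. Hence $(G,\p)$ is infinitesimally rigid. For the affine span, I would invoke the ``in particular'' clause of Lemma~\ref{lem:sard} (a configuration with deficient affine span produces a non-generic measurement), or equivalently apply Lemma~\ref{lem:not-flat} to the infinitesimally rigid framework $(G,\p)$; either way $\p$ has full $d$-dimensional affine span.

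\textbf{Main obstacle.} The mathematical content is light once Chevalley's theorem and Lemma~\ref{lem:sard} are in hand; the only point requiring care is the field-of-definition bookkeeping in the first step, namely that the open set $U$ witnessing density of the image --- equivalently, the proper subvariety carrying the unhit locus --- can be taken defined over $\QQ$, so that the genericity of $\x$ (a statement about rational polynomial relations) genuinely excludes it. This holds because $m_G(\cdot)$ is a $\QQ$-morphism, so $I$ and its complement in $M_{d,G}$ are $\QQ$-constructible, whence the Zariski closure of the unhit locus is defined over $\QQ$.
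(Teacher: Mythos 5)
Your proposal is correct and takes essentially the same route as the paper: the hit set is constructible (Chevalley), dense in the irreducible $M_{d,G}$, hence contains a nonempty Zariski-open subset (this is the paper's Lemma~\ref{lem:milne}, which also supplies the $\QQ$-definability bookkeeping you spell out), so a generic $\x$ is hit; infinitesimal rigidity and full span then both follow from Lemma~\ref{lem:sard} exactly as you argue.
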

\begin{proof}
The hit set is an irreducible 
constructible set with  $M_{d,G}$ as its closure.
By Lemma  \ref{lem:milne}, it must then contain a non-empty
(Zariski) 
open subset of 
$M_{d,G}$. 
Thus the unhit set must be contained
in a 
closed subset (i.e., a subvariety).  
This renders all unhit points non-generic.

Infinitesimal rigidity follows from Lemma~\ref{lem:sard}, which also gives us the
stated span.
\end{proof}

\begin{definition}
Let $\p$ be a configuration in $\CC^d$ with a full affine span. Then 
the \defn{open affine class} $\CT(\p)$ is the set of configurations that are  
affine
images of $\p$, and are non-degenerate (have full span).  
An affine class is \defn{generic} if it contains
a generic configuration.  (Generic affine classes exist, since $\CT(\p)$ is defined for every $\p$ with full span.)

Given a generic affine class $\CT$, we define the generic locus $\CT^g$ to be the subset of configurations in $\CT$ that are also generic as configurations.

Let $\overline{\CT(\p)}$ be the closure of an affine class.
This includes the degenerate affine images.
$\overline{\CT(\p)}$ is a linear space. 
\end{definition}

\begin{lemma}
\label{lem:linim}
Let $G$ be  an ordered graph on $n$ vertices and $\p$ a configuration 
of $n$ points in $\CC^d$.
Then $m_G(\overline{\CT(\p)})$ is 
a linear space,  and in particular, it is 
closed.
\end{lemma}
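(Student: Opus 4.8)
The key observation is that $m_G$ restricted to a single affine class has a very special algebraic structure: although $m_G$ is quadratic in the configuration, its restriction to the orbit of $\p$ under the affine group is governed by how affine maps act on squared edge lengths, and this action is linear on the appropriate coordinates. The plan is to parametrize $\overline{\CT(\p)}$ explicitly and compute $m_G$ on it. Write a general affine image of $\p$ as $\q_i = A\p_i + t$ where $A$ is a $d\times d$ matrix (not necessarily invertible, since we are taking the closure) and $t\in\CC^d$ a translation. Then for an edge $\{i,j\}$,
\[
m_{ij}(\q) = (A(\p_i-\p_j))\cdot(A(\p_i-\p_j)) = (\p_i-\p_j)^\top (A^\top A)(\p_i-\p_j).
\]
The translation $t$ drops out entirely, and the right-hand side is a \emph{linear} function of the entries of the symmetric matrix $S := A^\top A$. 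So if we let $\phi\colon \CC^{d\times d}\to \CC^m$ be the map sending $A$ to $m_G(A\p + t)$, it factors as $A \mapsto A^\top A \mapsto \big((\p_i-\p_j)^\top S (\p_i-\p_j)\big)_{\{i,j\}\in E}$, i.e. through a linear map $L$ on the space $\mathrm{Sym}_d$ of symmetric $d\times d$ matrices.

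Now the image of $m_G$ on $\overline{\CT(\p)}$ is the closure of $\phi(\CC^{d\times d})$ (a translation by $t$ only enlarges the parameter space without changing the image, since $t$ is irrelevant). The first step is to identify the image of the map $A\mapsto A^\top A$ from $\CC^{d\times d}$ to $\mathrm{Sym}_d$: over $\CC$ this map is \emph{surjective} onto all of $\mathrm{Sym}_d$ (every symmetric matrix over an algebraically closed field admits a "square root" factorization $S = A^\top A$, e.g. by diagonalizing and taking square roots of eigenvalues, or more robustly by a Cholesky-type argument valid generically and then by closure). Hence $\phi(\CC^{d\times d}) = L(\mathrm{Sym}_d)$, which is a \emph{linear subspace} of $\CC^m$ — already closed, no closure operation needed. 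Therefore $m_G(\overline{\CT(\p)}) = L(\mathrm{Sym}_d)$ is a linear space, as claimed.

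There is one gap to fill carefully: $\overline{\CT(\p)}$ is defined as the closure of the \emph{open} affine class (the non-degenerate affine images), so I must check that its image under $m_G$ equals the image of the \emph{full} set of affine images $\{A\p + t : A\in\CC^{d\times d}, t\in\CC^d\}$. This holds because the non-degenerate images $\{A : \det A \neq 0\}$ are Zariski dense in $\CC^{d\times d}$, $m_G$ is continuous (a polynomial map), and taking closures commutes appropriately: $m_G(\overline{\CT(\p)}) \supseteq m_G(\CT(\p))$ and $\overline{m_G(\CT(\p))} = \overline{L(\{S : S = A^\top A, \det A\neq 0\})}$; since invertible $A$ are dense and $A\mapsto A^\top A$ is continuous with dense image in $\mathrm{Sym}_d$ already from invertible $A$, this closure is all of $L(\mathrm{Sym}_d)$, a linear (hence closed) space, so in fact $m_G(\overline{\CT(\p)}) = L(\mathrm{Sym}_d)$ with no discrepancy. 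The main obstacle, such as it is, is this bookkeeping about closures together with the surjectivity of $A\mapsto A^\top A$ over $\CC$; the rest is the elementary linear-algebra identity $m_{ij}(A\p) = (\p_i-\p_j)^\top A^\top A(\p_i-\p_j)$, which makes the linearity transparent.
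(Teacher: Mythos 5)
Your proposal is correct and takes essentially the same approach as the paper: rewrite $m_{ij}$ on an affine orbit as the linear function $S \mapsto (\p_i-\p_j)^\top S (\p_i-\p_j)$ of the symmetric matrix $S = A^\top A$, and use surjectivity of $A\mapsto A^\top A$ onto $\mathrm{Sym}_d$ over $\CC$ (the paper invokes Takagi factorization where you sketch diagonalize-and-take-square-roots). Your closing bookkeeping about $\overline{\CT(\p)}$ versus the full set of affine images is sound; the paper handles it by noting directly that $\overline{\CT(\p)}$ already is the linear space of all (including degenerate) affine images of $\p$.
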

\begin{proof}
For each edge $ij$ of $G$, define its edge vector as
$\e:= \p_i-\p_j$ in $\CC^d$.
Then the complex squared length on that edge is the vector product
$\e^t \e$.

An affine transform, $A$,  applied to  $\p$ 
can be expressed as 
$\p_i \rightarrow \M \p_i + \t$, where $\M$ is some $d$-by-$d$ complex matrix and $\t$
is some (translation) vector. The effect on each edge vector is of
the form $\e_{ij} \rightarrow \M \e_{ij}$.
The effect on its squared length is 
$\e^t \e \rightarrow
\e^t \M^t \M \e =: \e^t \Q \e = \tr(\Q \e\e^t)$, 
where $\Q$ is a symmetric matrix. 
Note that the rightmost
expression is linear in $\Q$.

Since we are in the complex setting,
using a Takagi 
factorization, 
every symmetric matrix $\Q$ arises in this form from some $\M$.

Thus, we can model the action of $m_G(\cdot)$ on $\overline{\CT(\p)}$
by defining a map 
$n_{G,\p}(\Q)$ from symmetric $d\times d$ matrices $\Q$ to $\CC^m$ that  
acts 
coordinate-wise as 
$n_{G,\p}(\Q)_{ij} := \tr(\Q \e_{ij}\e_{ij}^t)$.  Since $n_{G,\p}(\cdot)$ 
is a linear map acting on the linear space of symmetric matrices, 
its image, which is $n_G(\overline{\CT(\p)})$, is a linear subspace of $\CC^m$ as claimed.
\end{proof}

\begin{definition}
Let $V$ be an irreducible homogeneous variety. 
We define an (open) \defn{Gauss fiber} $F$ of $V$ to be a maximal set
of smooth points of $V$ with a common tangent space.
(For an inhomogeneous variety, we would instead have to work with
affine tangent planes.)
We say that $F$ is a 
\defn{generic Gauss fiber} if it contains a point
that is generic in $V$. 
Given a generic Gauss fiber $F$ of $V$, we define the 
generic locus $F^g$ to be the subset of points
in $F$ that are also generic in $V$.
\end{definition}
The term ``Gauss fiber'' is used as it is the
fiber above a point in the image of the (rational) Gauss map
$\x\mapsto T_\x V$,
taking each smooth point of $V$ to the appropriate Grassmanian.
Importantly, the definition of $F$ and $F^g$ 
only depend on the geometry of the variety $V$, and not on any other information (such as how $V$ may have been generated from some graph).

\begin{remark}
\label{rem:rv1}
A deeper result about ruled varieties states that
if $F$ is a generic Gauss fiber of any irreducible homogeneous variety, 
then its closure,
$\overline{F}$,
is always 
a
linear space~\cite[Section 2.3.2]{rv}, and in particular,
irreducible.
This also tells  us that $F^g$ is 
dense in $F$ (Lemma~\ref{lem:dense}) and so 
$\overline{F^g}=\overline{F}$.
\end{remark}

The next set of lemmas will establish a correspondence 
between generic Gauss fibers of $M_{d,G}$ and
affine classes of configurations.

\begin{definition}
Let $V$ be a homogeneous variety in $\CC^m$.
Let $\x$ be a smooth point in $V$.
Let $\phi$ be a non-zero element of $(\CC^m)^*$.
We say that $\phi$ is \defn{tangent} to $V$ at $\x$ if 
$T_{\x}V \subseteq \ker(\phi)$. We will call (with slight abuse
of duality)
such a 
$\phi$ a \defn{tangential hyperplane}.
\end{definition}

The following lemma relates an equilibrium stress vector
for $(G,\p)$ to the geometry of $M_{d,G}$ around $m_G(\p)$.
\begin{lemma}[{\cite[Lemma 2.21]{ght}}]
\label{lem:stTan}
Let $G$ be an ordered graph with $n \ge d+2$ vertices.
Let $(G,\p)$ be an infinitesimally rigid  framework  with 
$m_G(\p)$ smooth in 
$M_{d,G}$ (such as when $\p$ is generic).
A non-zero $\omega \in (\CC^m)^*$ 
is tangent
to $M_{d,G}$ at $m_G(\p)$
iff  $\omega$ is an equilibrium stress for $(G,\p)$.
\end{lemma}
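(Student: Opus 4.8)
The plan is to relate the differential of $m_G$ at $\p$ to the rigidity matrix, so that tangential hyperplanes become exactly the left-kernel of that matrix, which is classically the space of equilibrium stresses. First I would write down the differential $d(m_G)_\p$ explicitly: since $m_{ij}(\p) = \sum_k (\p_i^k - \p_j^k)^2$, the partial derivative with respect to $\p_i^k$ is $2(\p_i^k - \p_j^k)$ and with respect to $\p_j^k$ is $-2(\p_i^k-\p_j^k)$. So, up to the harmless factor of $2$, $d(m_G)_\p$ is precisely the \emph{rigidity matrix} $R(G,\p)$, the $m \times dn$ matrix whose row for edge $\{i,j\}$ has the block $(\p_i - \p_j)$ in the $i$-slot and $-(\p_i-\p_j)$ in the $j$-slot. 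Because $(G,\p)$ is infinitesimally rigid with $n \ge d+2$, the rank of $R(G,\p)$ is $dn - \binom{d+1}{2}$, and since $m_G(\p)$ is a smooth point of $M_{d,G}$, the image of $d(m_G)_\p$ is exactly the tangent space $T_{m_G(\p)} M_{d,G}$ (this is where smoothness is used: the dimension of $M_{d,G}$ equals the generic rank $dn - \binom{d+1}{2}$ by Lemma \ref{lem:infrig}, so at a smooth point the tangent space has that dimension and therefore coincides with the image of the differential).

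Next I would make the duality statement precise. A functional $\omega \in (\CC^m)^*$ is tangent to $M_{d,G}$ at $m_G(\p)$ iff $T_{m_G(\p)} M_{d,G} \subseteq \ker \omega$, i.e.\ iff $\omega$ annihilates the image of $d(m_G)_\p = R(G,\p)$ (up to scalar), i.e.\ iff $\omega^t R(G,\p) = 0$, i.e.\ iff $\omega$ is in the left-kernel of the rigidity matrix. Writing out $\omega^t R(G,\p) = 0$ coordinate by coordinate (grouping the rows of $R(G,\p)$ that touch vertex $i$), the $i$-th vector block of this equation reads $\sum_{j : \{i,j\} \in G} \omega_{ij} (\p_i - \p_j) = 0$ for every vertex $i$. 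That is exactly the equilibrium condition \eqref{eqn:equilibrium} defining an equilibrium stress vector for $(G,\p)$. Conversely, any equilibrium stress, read as an element of $(\CC^m)^*$ via the edge ordering, lies in the left-kernel of $R(G,\p)$ and hence is tangent. This gives the biconditional.

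The only subtlety—and the one place I'd want to be careful—is matching the definition of ``tangent'' (a hyperplane $\ker\phi$ containing the tangent space) with ``left-kernel of the differential,'' which needs that the tangent space really \emph{is} the image of $d(m_G)_\p$, not merely contained in it. This is handled by the smoothness hypothesis together with the dimension count from Lemma \ref{lem:infrig} and Theorem \ref{thm:glr}: at a smooth point of an irreducible variety the tangent space has dimension equal to $\dim M_{d,G} = dn - \binom{d+1}{2}$, which matches $\rank R(G,\p)$ under infinitesimal rigidity, forcing equality. Everything else is the routine identification of the rigidity matrix with $d(m_G)_\p$ and the standard reshuffling of indices that turns ``left-kernel of the rigidity matrix'' into ``equilibrium stress.'' Since this lemma is quoted verbatim from \cite[Lemma 2.21]{ght}, I would either cite that proof directly or reproduce the short argument above.
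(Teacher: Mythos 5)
Your proof is correct and follows the standard argument: the paper itself only cites \cite[Lemma 2.21]{ght} without reproducing a proof, and your reconstruction — identifying $d(m_G)_\p$ with the rigidity matrix $R(G,\p)$, using smoothness of $m_G(\p)$ together with the rank count from infinitesimal rigidity to show that the image of the differential equals (not merely sits inside) the tangent space $T_{m_G(\p)}M_{d,G}$, and then reading the annihilator condition $\omega^{t}R(G,\p)=0$ vertex-block by vertex-block as the equilibrium condition~\eqref{eqn:equilibrium} — is precisely the argument in \cite{ght}. You correctly flagged the one genuinely subtle point, that both the smoothness and the full-rank hypotheses are needed to force equality between the image of $d(m_G)_\p$ and the Zariski tangent space before dualizing.
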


\begin{remark}
\label{rem:extra}
If $m_G(\p)$ is smooth, but $(G,\p)$ is infinitesimally 
flexible, then every tangential hyperplane $\omega$
is still an equilibrium stress for $(G,\p)$, but the
framework will also satisfy extra equilibrium stresses.
\end{remark}

\begin{lemma}
\label{lem:ftoa}
Let $G$, an ordered graph on $n \ge d+2$ vertices with $m$ edges,
be generically globally rigid in $\CC^d$.
Let $F$ be a generic Gauss fiber of $M_{d,G}$.
Then
there exists a single affine class $\CT$ such that 
all $\p$ with $m_G(\p) \in F^g$ are in $\CT$; i.e., 
$m^{-1}_G(F^g)\subseteq \CT$. 
This class $\CT$ is generic. 
\end{lemma}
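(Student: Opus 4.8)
The goal is to show that a generic Gauss fiber $F$ of $M_{d,G}$ pulls back (on its generic locus) into a single affine class, and that this class is generic. The starting point is Lemma~\ref{lem:stTan}: since $F$ contains a point generic in $M_{d,G}$, any such point is $m_G(\p)$ for some configuration $\p$, and by Lemma~\ref{lem:genhit} that $\p$ is infinitesimally rigid with full affine span; moreover the common tangent space $T$ to the smooth points of $F$ corresponds (via Lemma~\ref{lem:stTan}) to the space of tangential hyperplanes $\omega$ that are exactly the equilibrium stresses of $(G,\p)$. So for a generic $\p$ with $m_G(\p)\in F^g$, the space of equilibrium stresses is determined by $F$ alone --- it is the annihilator of $T$. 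The first step is therefore: fix one such generic $\p_0$ with $m_G(\p_0)\in F^g$, let $S$ be its (now $F$-determined) space of equilibrium stress matrices, and let $K$ be the shared stress kernel of $S$. By Theorem~\ref{thm:sharedKernel}, since $G$ is generically globally rigid and $\p_0$ is generic, $K$ has dimension exactly $d+1$ and is spanned by the all-ones vector together with the $d$ coordinate vectors of $\p_0$. Thus $K$ (equivalently $\overline{\CT(\p_0)}$, up to the translation/scaling bookkeeping) is intrinsically recovered from $F$.

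The second step is to show every other $\p$ with $m_G(\p)\in F^g$ lies in $\CT(\p_0)$. Such a $\p$ is again generic and infinitesimally rigid with full span, and its equilibrium stress space is again (by Lemma~\ref{lem:stTan}) the annihilator of the \emph{same} tangent space $T$ --- because all points of $F^g$ share a tangent space, and at smooth generic points the tangent space of $F$ equals the tangent space of $M_{d,G}$. Hence $\p$ has the same stress space $S$, hence the same shared stress kernel $K$ of dimension $d+1$, which (again by Theorem~\ref{thm:sharedKernel}) is spanned by the all-ones vector and the coordinates of $\p$. Two full-span configurations in $\CC^d$ whose coordinate vectors span the same $(d+1)$-dimensional subspace of $\CC^n$ (containing all-ones) are affine images of one another: writing $\p$'s coordinates as $\QQ$-linear combinations of $\p_0$'s coordinates and the all-ones vector exhibits exactly an affine map, and full span of $\p$ forces it to be invertible. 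So $\p\in\CT(\p_0)$, giving $m_G^{-1}(F^g)\subseteq \CT:=\CT(\p_0)$. Genericity of $\CT$ is immediate since $\p_0\in\CT$ is generic.

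**Main obstacle.** The delicate point is making sure one may invoke Lemma~\ref{lem:stTan} and Theorem~\ref{thm:sharedKernel} at \emph{every} point of $F^g$, not just at one favorable point: this requires knowing that a point generic in $M_{d,G}$ lying in $F$ is hit only by configurations that are themselves generic (Lemma~\ref{lem:genhit} gives infinitesimal rigidity and full span, and Lemma~\ref{lem:sard} rules out the infinitesimally flexible or span-deficient hits, but one wants to be sure the hitting configuration can be taken generic, or at least generic enough that Theorem~\ref{thm:sharedKernel}'s conclusion about the shared stress kernel applies --- here Remark~\ref{rem:shared} is the relevant safeguard, since it says every complex generic framework realizes the rank-$(n-d-1)$ stress matrix). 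The other subtlety is bookkeeping the difference between the \emph{linear} span $\overline{\CT(\p)}$ of all affine images (including degenerate ones) and the \emph{open} affine class $\CT(\p)$ of nondegenerate ones: one recovers the linear space $\overline{\CT(\p)}$ intrinsically from $K$, but must then argue that the specific $\p$ in question is a nondegenerate representative, which is exactly where full affine span (from Lemma~\ref{lem:genhit}) is used. I expect these to be routine once the stress-kernel dictionary is set up, so the real content is simply: \emph{a shared tangent space along $F^g$ forces a shared equilibrium stress space, which forces a shared $(d+1)$-dimensional kernel, which forces a common affine class}.
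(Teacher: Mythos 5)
Your proposal follows essentially the same route as the paper's proof: use Lemma~\ref{lem:genhit} to get full-span, infinitesimally rigid preimages; use Lemma~\ref{lem:stTan} to identify the common equilibrium stress space $S$ with the annihilator of the shared tangent space of $F$; use Theorem~\ref{thm:sharedKernel} to pin down a $(d+1)$-dimensional shared stress kernel $K$; and conclude that every full-span $\p$ in $m_G^{-1}(F^g)$ spans that same $K$ and hence lies in a single affine class.

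Two places need repair, both of which you correctly flag as the delicate points but attribute to the wrong tools. First, the whole argument hangs on finding a single \emph{generic} $\p_0$ with $m_G(\p_0)\in F^g$, because Theorem~\ref{thm:sharedKernel} only gives $\dim K = d+1$ at a generic framework. You point to Remark~\ref{rem:shared} as the safeguard; it is not. What you need is Lemma~\ref{lem:preG}: a generic point of $M_{d,G}$ has at least one preimage that is generic in configuration space. Remark~\ref{rem:shared} then matters only to confirm that the conclusion of Theorem~\ref{thm:sharedKernel} (existence of a stress of rank $n-d-1$, hence $\dim K = d+1$) applies to that \emph{complex} generic preimage. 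Second, when you write that for a general $\p$ with $m_G(\p)\in F^g$ the kernel $K$ ``(again by Theorem~\ref{thm:sharedKernel}) is spanned by the all-ones vector and the coordinates of $\p$,'' this citation does not apply: $\p$ need not be generic. The correct reasoning is the one you implicitly rely on anyway: $K$ is fixed by $F$ (via $S$) and already known to be $(d+1)$-dimensional from the $\p_0$ step; the equilibrium condition forces the coordinates of $\p$ and the all-ones vector into $K$; full affine span (Lemma~\ref{lem:genhit}) makes those vectors $(d+1)$-dimensional; so the inclusion is an equality. Finally, a typo: the linear combinations expressing $\p$ in terms of $\p_0$ and the all-ones vector are $\CC$-linear, not $\QQ$-linear.
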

\begin{proof}
From Lemma~\ref{lem:genhit}, 
each $\x \in F^g$ is hit, 
giving us at least one
$\p$ with $m_G(\p)=\x$. 
Also from Lemma~\ref{lem:genhit}, 
each such $\p$ is infinitesimally rigid and thus
has a full span.

Lemma~\ref{lem:stTan} then tells us that the equilibrium 
stresses for $(G,\p)$  with $m_G(\p) \in F^g$
correspond
to the tangential hyperplanes at $m_G(\p)$.
Since the tangents, and thus 
tangential hyperplanes, agree for all $\x \in F^g$,
all such $\p$ share the same space $S$ of equilibrium stresses.

From Lemma~\ref{lem:preG}, above  any 
$\x \in F^g$ there is a generic configuration $\q$ 
and from Theorem~\ref{thm:sharedKernel} 
(see also Remark~\ref{rem:shared})
$\q$ has a shared stress kernel of dimension $d+1$.
Thus $S$ must have a shared stress kernel of dimension $d+1$.
This makes the
dimension of the set of $d$-dimensional configurations 
having this stress space $S$ 
equal to $d(d+1)$.
In particular, 
this places all such $\p$ in some unique closed affine class $\bar{\CT}$.
This, along with the established affine span of
$\p$ places it in $\CT$.
Genericity of $\CT$ comes from the genericity of $\q$.
\end{proof}

In light of Lemma~\ref{lem:ftoa}, the following  is well-defined.
\begin{definition}
\label{def:AofF}
Let $G$, an ordered graph on $n \ge d+2$ vertices with $m$ edges,
be generically globally rigid in $\CC^d$.
Let $F$ be a generic Gauss fiber of $M_{d,G}$. Define, 
by an overloading of notation, 
$\CT(F)$ to be the generic affine
class $\CT(\p)$ for any/every $\p$ above any 
$\x \in F^g$.  We also denote by $\CT(\cdot)$
the map $F\mapsto \CT(F)$, which is defined
for generic Gauss fibers of $M_{d,G}$.
\end{definition}

\begin{remark}
\label{rem:anysmooth}
From Remark~\ref{rem:extra}, when $G$ is generically
globally rigid and 
$\q$ is any configuration so that $m_G(\q)$
is smooth and in a generic Gauss fiber $F$ (even if $m_G(\q)$ is not in $F^g$), 
then $\q \in \overline{\CT(F)}$. 
Additionally, any such $\q$ must have an equilibrium 
stress matrix of rank $n-d-1$.
If additionally, 
$\q$ has
a full affine span, then $\q \in \CT(F)$. (Later we will see that such $\q$,
with $m_G(\q)$ smooth, must
in fact
always have full affine span.)
\end{remark}

\begin{lemma}
\label{lem:ftoaInj}
Let $G$, an ordered graph on $n \ge d+2$ vertices with $m$ edges,
be generically globally rigid in $\CC^d$.
If $\p^1$ and $\p^2$ are generic configurations
and $\CT(\p^2) =\CT(\p^1)$, then $m_G(\p^1)$ and $m_G(\p^2)$
are both generic and 
in the same generic Gauss fiber of $M_{d,G}$.
Thus, if $F^1$ and $F^2$ are  two 
different generic Gauss fibers,  then $\CT(F^1) \neq \CT(F^2)$.
\end{lemma}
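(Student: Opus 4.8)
The plan is to exploit the affine-invariance of equilibrium stresses together with Lemma~\ref{lem:stTan}, which at a generic measurement identifies the tangential hyperplanes with the equilibrium stresses of the generating framework. First I would record that $\CT(\p^2)=\CT(\p^1)$ forces $\p^2$ to be a non-degenerate affine image of $\p^1$, since $\p^2\in\CT(\p^2)=\CT(\p^1)$; write $\p^2_i=\M\p^1_i+\t$ with $\M$ non-singular. Substituting $\p^2_i-\p^2_j=\M(\p^1_i-\p^1_j)$ into the equilibrium condition~\eqref{eqn:equilibrium} shows that $\omega$ is an equilibrium stress for $(G,\p^1)$ iff it is one for $(G,\p^2)$, so the two frameworks share one and the same linear space $S$ of equilibrium stress vectors.

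Next I would let genericity do the remaining work. Since $G$ is generically globally rigid in $\CC^d$ it is generically locally rigid, hence (Theorem~\ref{thm:glr}) generically infinitesimally rigid, so $(G,\p^1)$ and $(G,\p^2)$ are infinitesimally rigid. By Lemma~\ref{lem:genMap} the points $\x^i:=m_G(\p^i)$ are generic in the $\QQ$-defined irreducible variety $M_{d,G}$, hence smooth. Thus Lemma~\ref{lem:stTan} applies at $\x^1$ and at $\x^2$, and in both cases the set of tangential hyperplanes is exactly the set of non-zero elements of $S$. Since for a smooth point $\x$ of a homogeneous variety $V\subseteq\CC^m$ one has $T_\x V=\bigcap\{\ker\phi:\phi \text{ tangent to } V \text{ at }\x\}$ (the tangent space and its space of tangential hyperplanes are mutual annihilators in $\CC^m$), equality of the two sets of tangential hyperplanes upgrades to $T_{\x^1}M_{d,G}=T_{\x^2}M_{d,G}$. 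Hence $\x^1$ and $\x^2$ lie in a common Gauss fiber, which is generic because $\x^1$ is generic in $M_{d,G}$.

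For the final assertion I would argue contrapositively: if $\CT(F^1)=\CT(F^2)$, use Lemma~\ref{lem:preG} to choose generic configurations $\p^i$ with $m_G(\p^i)\in(F^i)^g$; then Definition~\ref{def:AofF} gives $\CT(\p^1)=\CT(F^1)=\CT(F^2)=\CT(\p^2)$, so by the first part $m_G(\p^1)$ and $m_G(\p^2)$ lie in the same Gauss fiber. As $m_G(\p^1)\in F^1$ and $m_G(\p^2)\in F^2$ and distinct Gauss fibers are disjoint, this forces $F^1=F^2$.

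I expect the only genuine subtlety to be the bookkeeping of the middle step: verifying that both hypotheses of Lemma~\ref{lem:stTan}—infinitesimal rigidity of the framework and smoothness of its image—are in force at a generic configuration, and then the (elementary but essential) point that the full collection of tangential hyperplanes at a smooth point determines the tangent space, which is what promotes equality of the stress spaces to membership in a common Gauss fiber.
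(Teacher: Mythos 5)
Your proof is correct and takes essentially the same approach as the paper: identify that configurations in the same affine class share the same equilibrium stresses, invoke Lemma~\ref{lem:stTan} (after checking its hypotheses hold at generic points) to equate tangential hyperplanes, and conclude equal tangent spaces and hence a common generic Gauss fiber. The paper's proof is terser (it simply says ``proceed as in the proof of Lemma~\ref{lem:ftoa}'' and omits both the explicit annihilator step and the contrapositive for the final assertion), but the content matches yours.
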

\begin{proof}
We proceed as in the proof of Lemma~\ref{lem:ftoa}.
Since $\p^1$ and $\p^2$ 
are  non-singular affine images of each other,
they must satisfy all of the same equilibrium 
stress matrices. Thus $m_G(\p^1)$ and $m_G(\p^2)$ 
must have the same tangential hyperplanes, and be in the same
Gauss fiber $F$ of $M_{d,G}$.
From Lemma~\ref{lem:genMap}, the images $m_G(\p^i)$, $i=1,2$ 
are generic, so $F$ is  a generic Gauss fiber.
\end{proof}
We get the following corollary, which  is also
interesting in its own right.
\begin{proposition}\label{prop:GFs-bij-to-ACs}
Let $G$, an ordered graph on $n \ge d+2$ vertices with $m$ edges,
be generically globally rigid in $\CC^d$.  The
map $\CT(\cdot)$ gives 
a 
bijection between 
generic Gauss fibers of $M_{d,G}$
and
generic affine classes.
Finally, we have
$F^g=m_G(\CT(F)^g)$.
\end{proposition}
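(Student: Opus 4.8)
The statement is a corollary of Lemmas~\ref{lem:ftoa} and~\ref{lem:ftoaInj} together with the two ``genericity transport'' facts already invoked in their proofs, so the plan is to assemble well-definedness, injectivity, and surjectivity of $\CT(\cdot)$, and then read off the identity $F^g = m_G(\CT(F)^g)$ from the same correspondence. Well-definedness of $\CT(\cdot)$ on generic Gauss fibers is exactly Definition~\ref{def:AofF}, which is licensed by Lemma~\ref{lem:ftoa}: every configuration above a point of $F^g$ lies in one and the same affine class, and that class is generic. Injectivity is precisely the last sentence of Lemma~\ref{lem:ftoaInj}: distinct generic Gauss fibers have distinct associated affine classes. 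So the only genuine content is surjectivity and the final equality.

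\textbf{Surjectivity.} I would start from an arbitrary generic affine class $\CT$, choose a generic configuration $\p \in \CT$ (one exists by the definition of a generic affine class), and set $\x := m_G(\p)$. By Lemma~\ref{lem:genMap}, $\x$ is generic in $M_{d,G}$, hence a smooth point, so there is a unique Gauss fiber $F$ of $M_{d,G}$ through $\x$ (if two maximal sets of smooth points with a common tangent space both contain $\x$, their union is again such a set, so by maximality they coincide). Since $\x \in F^g$, the fiber $F$ is generic, and since $\p$ lies above $\x \in F^g$, Definition~\ref{def:AofF} gives $\CT(F) = \CT(\p) = \CT$. Hence $\CT(\cdot)$ is onto generic affine classes, which together with injectivity gives the claimed bijection.

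\textbf{The identity $F^g = m_G(\CT(F)^g)$.} For $F^g \subseteq m_G(\CT(F)^g)$, take $\x \in F^g$; by Lemma~\ref{lem:preG} (as used in the proof of Lemma~\ref{lem:ftoa}) there is a generic configuration $\q$ with $m_G(\q) = \x$, and then $\CT(\q) = \CT(F)$ by Definition~\ref{def:AofF}, so $\q \in \CT(F)^g$ and $\x \in m_G(\CT(F)^g)$. For the reverse inclusion, fix (again by Lemma~\ref{lem:preG}) a generic configuration $\q_0$ above some $\x_0 \in F^g$, so that $\CT(F) = \CT(\q_0)$. Given any $\q \in \CT(F)^g$, the configurations $\q$ and $\q_0$ are both generic with $\CT(\q) = \CT(\q_0)$, so Lemma~\ref{lem:ftoaInj} places $m_G(\q)$ and $m_G(\q_0) = \x_0$ in a common generic Gauss fiber; this fiber contains the smooth point $\x_0 \in F$, hence equals $F$ by the uniqueness noted above, and $m_G(\q)$ is generic, so $m_G(\q) \in F^g$.

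\textbf{Main obstacle.} There is no deep obstacle here; the only thing to handle carefully is transporting genericity in both directions between configuration space and $M_{d,G}$ — forward by Lemma~\ref{lem:genMap}, backward (a generic preimage over a generic point of $F^g$) by Lemma~\ref{lem:preG} — and the observation that a fixed smooth point determines its Gauss fiber. Once these are in place, the argument is formal bookkeeping on top of Lemmas~\ref{lem:ftoa} and~\ref{lem:ftoaInj}.
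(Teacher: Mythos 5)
Your proof is correct and follows essentially the same route as the paper: injectivity from Lemma~\ref{lem:ftoaInj}, surjectivity from Lemma~\ref{lem:genMap} applied to a generic representative of a generic affine class, and the two inclusions for $F^g = m_G(\CT(F)^g)$ from Lemmas~\ref{lem:preG}, \ref{lem:ftoaInj}, and \ref{lem:genMap}. The only cosmetic difference is that you make explicit the uniqueness of the Gauss fiber through a smooth point and derive the first inclusion directly via a generic preimage, where the paper routes it through Lemmas~\ref{lem:ftoa} and~\ref{lem:genhit} first; both are sound.
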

\begin{proof}
Lemma \ref{lem:ftoaInj} implies that the map $\CT(\cdot)$
from generic Gauss fibers of $M_{d,G}$ to affine classes is injective.
Lemma \ref{lem:preG} also implies that if $F$ is a generic 
Gauss fiber of $M_{d,G}$ that $\CT(F)$ is a generic affine class.

The map $\CT(\cdot)$ is also surjective.  
By definition, a generic affine class arises 
as $\CT(\p)$ for a generic configuration $\p$. By Lemma 
\ref{lem:genMap}, the image $m_G(\p)$ is generic 
in $M_{d,G}$.  Hence the Gauss fiber containing $m_G(\p)$ 
is generic.  Since $\CT(\p)$ was an arbitrary generic affine 
class, we have surjectivity.

From Lemma~\ref{lem:ftoa} we have 
$m^{-1}_G(F^g) \subseteq \CT(F)$. Since, from
Lemma~\ref{lem:genhit}, each point in $F^g$ is hit, this gives
us 
$F^g \subseteq m_G(\CT(F))$. 
From Lemma~\ref{lem:preG}, this means
$F^g \subseteq m_G(\CT(F)^g)$. 

In the other direction, Lemma~\ref{lem:ftoaInj} gives us 
$F \supseteq m_G(\CT(F)^g)$. 
From Lemma~\ref{lem:genMap}, this means
$F^g \supseteq m_G(\CT(F)^g)$. 
\end{proof}

The following is the main structural 
lemma that we will need going forward.

\begin{lemma}
\label{lem:closure}
Let $G$, an ordered graph on $n \ge d+2$ vertices with 
$m$ edges, be generically globally rigid in $\CC^d$.
Let $F$ be a generic Gauss fiber of $M_{d,G}$. 
Then  $\overline{F^g} = m_G(\overline{\CT(F)})$.
\end{lemma}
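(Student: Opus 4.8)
The plan is to leverage Proposition~\ref{prop:GFs-bij-to-ACs}, which already identifies $F^g$ with $m_G(\CT(F)^g)$, together with Lemma~\ref{lem:linim}, which says that $m_G(\overline{\CT(F)})$ is a linear space (in particular Zariski-closed), and a density statement for generic loci in the spirit of Lemma~\ref{lem:dense}. Throughout write $\CT := \CT(F)$; by Lemma~\ref{lem:ftoa}, $\CT$ is a generic affine class, so $\CT^g \neq \emptyset$, and $\overline{\CT}$ is an irreducible linear space by the definition of an affine class. I will prove the two inclusions separately (the two sides are both linear spaces by Remark~\ref{rem:rv1} and Lemma~\ref{lem:linim}, but since their dimensions are not controlled a priori, one genuinely needs both directions).

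For the inclusion $\overline{F^g} \subseteq m_G(\overline{\CT})$ I would argue directly: by Proposition~\ref{prop:GFs-bij-to-ACs}, $F^g = m_G(\CT^g) \subseteq m_G(\CT) \subseteq m_G(\overline{\CT})$, and since $m_G(\overline{\CT})$ is closed by Lemma~\ref{lem:linim}, passing to the Zariski closure gives $\overline{F^g} \subseteq m_G(\overline{\CT})$.

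The reverse inclusion $m_G(\overline{\CT}) \subseteq \overline{F^g}$ is where the work is. The key step is to show that $\CT^g$ is dense in $\overline{\CT}$. Granting this, continuity of the morphism $m_G$ yields $m_G(\overline{\CT}) = m_G(\overline{\CT^g}) \subseteq \overline{m_G(\CT^g)} = \overline{F^g}$, again using Proposition~\ref{prop:GFs-bij-to-ACs} for the last equality; combining with the first inclusion then finishes the proof. To prove the density claim, I would note that $\CT$ is a nonempty Zariski-open, hence dense, subset of the irreducible variety $\overline{\CT}$, so it suffices to show $\CT^g$ is dense in $\CT$; moreover, since $\CT$ contains a $\QQ$-generic configuration, no nonzero polynomial over $\QQ$ vanishes identically on $\overline{\CT}$. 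Hence $\CT \setminus \CT^g$ is the union, over the countably many nonzero $P \in \QQ[x_1,\dots,x_{dn}]$, of the proper closed subsets $\CT \cap \{P = 0\}$ of the irreducible variety $\CT$, and since an irreducible variety over the uncountable field $\CC$ is not a countable union of proper closed subsets, $\CT^g$ is dense in $\CT$. The main obstacle is exactly this density bookkeeping: one must keep in mind that $\overline{\CT}$ is \emph{not} defined over $\QQ$ (it depends on the generic configuration generating $\CT$), so ``generic as a configuration in $\CC^{dn}$'' and ``generic as a point of $\overline{\CT}$'' are different notions, and it is the countable-union argument over $\CC$ that reconciles them. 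If Lemma~\ref{lem:dense} is stated generally enough to cover generic affine classes, this step collapses to a citation; otherwise the argument above supplies it.
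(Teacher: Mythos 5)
Your proof is correct and takes essentially the same route as the paper: Proposition~\ref{prop:GFs-bij-to-ACs} to identify $F^g$ with $m_G(\CT(F)^g)$, Lemma~\ref{lem:linim} for closedness of $m_G(\overline{\CT(F)})$ in one direction, and density of $\CT(F)^g$ in $\overline{\CT(F)}$ plus continuity in the other. The density step you worked out by hand is exactly what Lemma~\ref{lem:dense} provides (it is stated to allow $V$ not defined over $\QQ$), so your ``collapses to a citation'' escape clause indeed fires and the paper simply cites it.
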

\begin{proof}
From Proposition~\ref{prop:GFs-bij-to-ACs}
we have
$F^g = m_G(\CT(F)^g)
\subseteq  m_G(\CT(F))$. 

From Lemma~\ref{lem:dense}, 
${\CT(F)^g}$ is dense in 
$\overline{\CT(F)}$ and so 
$\overline{\CT(F)^g}=\overline{\CT(F)}$.
From continuity, we have 
$\overline{m_G(\CT(F)^g)}
\supseteq m_G(\overline{\CT(F)^g})$. 
Thus 
$\overline{F^g} = \overline{m_G(\CT(F)^g)}
\supseteq m_G(\overline{\CT(F)^g})
=
m_G(\overline{\CT(F)})
$. 

For the other direction, 
we have established above that 
$F^g 
\subseteq  m_G(\CT(F))$. 
Meanwhile, 
from Lemma~\ref{lem:linim}, the image 
$m_G(\overline{\CT(F)})$
is 
closed, and thus, from continuity, 
$
\overline{m_G(\CT(F))} =
m_G(\overline{\CT(F)})$.
Thus 
$\overline{F^g} \subseteq 
\overline{m_G(\CT(F))} =
m_G(\overline{\CT(F)})$.
\end{proof}
\begin{remark}
\label{rem:rv2}
In light of Remark~\ref{rem:rv1}, 
we see that for a generically globally rigid graph $G$ and generic Gauss fiber $F$ of $M_{d,G}$, we actually have
 $\overline{F} =m_G(\overline{\CT(F)})$. This also means that 
all points of $\overline{F}$ are hit.
\end{remark}

\begin{lemma}
\label{lem:conv}
Let $G$, an ordered graph on $n \ge d+2$ vertices with $m$ edges,
be generically globally rigid in $\CC^d$.
Let $\x$ be any point 
(not necessarily generic)
in
 $M_{d,G}\setminus M_{d-1,G}$ and in $\overline{F^g}$, 
for some generic Gauss fiber $F$ of $M_{d,g}$.
There must be a configuration $\q$ that has full span and is in ${\CT(F)}$
such that $m_G(\q)=\x$.
For such a $\q$, the framework $(G,\q)$ 
must be infinitesimally rigid, and 
hence also be locally rigid.
\end{lemma}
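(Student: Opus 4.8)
The plan is to combine Lemma~\ref{lem:closure} with Lemma~\ref{lem:linim} to produce the configuration $\q$, and then to use the span hypothesis together with Lemma~\ref{lem:sard} (and its surrounding machinery) to upgrade it to an infinitesimally rigid framework. First I would invoke Lemma~\ref{lem:closure}, which gives $\overline{F^g} = m_G(\overline{\CT(F)})$. Since $\x \in \overline{F^g}$ by hypothesis, this immediately yields some $\q \in \overline{\CT(F)}$ with $m_G(\q) = \x$. This $\q$ lies in the closure of the affine class, so it is an affine image of the generic configurations in $\CT(F)$, but a priori it could be a degenerate (span-deficient) one. The first substantive point is to rule this out: if $\q$ had deficient affine span, then it would be an affine image lying in $\overline{\CT(F)}\setminus\CT(F)$, and I claim its image $m_G(\q)$ must land in $M_{d-1,G}$. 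Indeed, a span-deficient configuration in $\CC^d$ is (after applying a suitable element of the affine group, which does not change whether the image lies in $M_{d-1,G}$) supported on a $(d-1)$-dimensional affine subspace, so $m_G(\q) \in M_{d-1,G}$. This contradicts the hypothesis $\x \in M_{d,G}\setminus M_{d-1,G}$. Hence every $\q$ over $\x$ in $\overline{\CT(F)}$ has full affine span, so $\q \in \CT(F)$, as claimed.

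Next I would establish infinitesimal rigidity of $(G,\q)$. Here the key tool is Lemma~\ref{lem:ir-aff}: since $\CT(F)$ is a generic affine class, it contains a generic configuration $\p$, which by Lemma~\ref{lem:genhit} (applied to the generically globally rigid, hence generically locally rigid, graph $G$) is infinitesimally rigid; and $\q$, being a non-singular affine image of $\p$, is therefore also infinitesimally rigid by Lemma~\ref{lem:ir-aff}. The only thing to check is that $\q$ really is a \emph{non-singular} affine image of $\p$, i.e.\ that the affine map taking $\p$ to $\q$ is invertible — but this is exactly what it means for $\q$ to lie in the open affine class $\CT(F)$ rather than merely in its closure, which we have just verified. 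Finally, infinitesimal rigidity implies local rigidity by Theorem~\ref{thm:ir-lr}, giving the last assertion.

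The main obstacle, and the step deserving the most care, is the claim that a span-deficient $\q \in \overline{\CT(F)}$ has $m_G(\q) \in M_{d-1,G}$. One must be careful that "deficient affine span in $\CC^d$" genuinely forces the measurement point into $M_{d-1,G}$: the clean way is to note that the squared-length map $m_G$ is invariant under the orthogonal group $O(d,\CC)$ and under translations, that any affine subspace of dimension $\le d-1$ in $\CC^d$ can be moved by such a transformation into the coordinate hyperplane $\{x^d = 0\}$ (this uses only that $O(d,\CC)$ acts transitively enough on subspaces — here one should be mildly cautious about isotropic subspaces, but a configuration with deficient \emph{affine} span can always be translated so its span is a linear subspace, and then its image under $m_G$ agrees with the image of its orthogonal projection since the extra coordinate contributes squared differences that, appropriately, can be absorbed), and that $m_G$ restricted to configurations in that hyperplane is exactly $m_G$ in dimension $d-1$. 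Once that is granted, the rest of the argument is a routine assembly of results already in hand.
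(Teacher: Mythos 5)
Your proposal is correct and follows the paper's proof closely: obtain $\q\in\overline{\CT(F)}$ from Lemma~\ref{lem:closure}, deduce full affine span from $\x\notin M_{d-1,G}$ (placing $\q\in\CT(F)$), establish infinitesimal rigidity via the affine invariance of Lemma~\ref{lem:ir-aff} together with genericity of $\CT(F)$, and close with Theorem~\ref{thm:ir-lr}. The only real difference is stylistic: you prove infinitesimal rigidity directly (a generic $\p\in\CT(F)$ is infinitesimally rigid, hence so is its nonsingular affine image $\q$), while the paper runs the same affine-invariance idea in contrapositive form through Lemma~\ref{lem:sard}; both are correct and hinge on Lemma~\ref{lem:ir-aff}. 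One small caution about your treatment of the span-deficient case: the worry about isotropic subspaces is the right instinct, but ``orthogonal projection'' onto the span does not straightforwardly absorb the extra coordinate over $\CC$ when the restricted form is degenerate; the clean way to see that deficient affine span forces $m_G(\q)\in M_{d-1,G}$ is the $B^{\top}B$ (Takagi-type) factorization already used in the proof of Lemma~\ref{lem:linim}, which re-expresses $m_G(\q)$ as the measurement of an honest $(d-1)$-dimensional configuration; the paper treats this implication as obvious and does not spell it out.
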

\begin{proof}
From Lemma~\ref{lem:closure} there must be some $\q$ in 
$\overline{\CT(F)}$ such that 
$m_G(\q)=\x$.
From the assumption that  $\x$ is not in 
$M_{d-1,G}$, we know that 
$\q$ must have an affine span of
dimension $d$. Thus $\CT(\q)$ is well defined and
is equal to $\CT(F)$.

If $(G,\q)$ were  infinitesimally flexible, then from
Lemma~\ref{lem:ir-aff} so too would all of the 
points in 
$\CT(\q)$, which equals $\CT(F)$.  
But from Lemma~\ref{lem:sard}, this would  
contradict the assumed genericity of $F$.
Local rigidity  follows 
from infinitesimal rigidity and  Theorem \ref{thm:ir-lr}.
\end{proof}

The next two propositions form the central part of our argument.
Informally, they say that we can distinguish between 
points in the measurement set that arise from lower-dimensional
configurations from those that are merely singular by looking 
at the generic Gauss fibers going through them.  Figure \ref{fig:cone-cyl} gives a schematic of the two situations.

\begin{proposition}
\label{prop:mainA}
Let $G$ be an ordered  graph with $n \ge d+2$ vertices
 that is generically globally rigid in $\CC^d$.
Let $\x$ be any point (not necessarily generic)
in
 $M_{d,G}\setminus M_{d-1,G}$. Then there are at most a finite number
of generic Gauss fibers $F$ of $M_{d,G}$ with $\x$ 
in $\overline{F^g}$.
\end{proposition}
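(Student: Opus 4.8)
The plan is to show that for a fixed $\x \in M_{d,G} \setminus M_{d-1,G}$, the set of generic affine classes $\CT$ whose image-linear-space $m_G(\overline{\CT})$ contains $\x$ is finite; by Proposition~\ref{prop:GFs-bij-to-ACs} and Lemma~\ref{lem:closure}, each generic Gauss fiber $F$ with $\x \in \overline{F^g}$ corresponds bijectively to such a $\CT$, so finiteness of the former implies finiteness of the latter. By Lemma~\ref{lem:conv}, any generic Gauss fiber $F$ with $\x \in \overline{F^g}$ yields a configuration $\q \in \CT(F)$ with full affine span, $m_G(\q) = \x$, and $(G,\q)$ infinitesimally (hence locally) rigid. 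So it suffices to bound the number of affine classes $\CT(\q)$ arising from configurations $\q$ that are locally rigid and satisfy $m_G(\q) = \x$.

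The key step is to invoke Lemma~\ref{lem:ccomp}: up to congruence, there are only finitely many configurations $\q$ with $(G,\q)$ locally rigid and equivalent to $(G,\p_0)$ for a fixed framework $(G,\p_0)$ with $m_G(\p_0) = \x$ — i.e., finitely many locally rigid $\q$ in the fiber $m_G^{-1}(\x)$, up to congruence. Since congruent configurations lie in the same affine class, and since (by Lemma~\ref{lem:conv}) each relevant generic Gauss fiber $F$ produces such a locally rigid $\q$ with $m_G(\q)=\x$, the map $F \mapsto [\q]$ (congruence class) lands in this finite set. The remaining point is that this assignment is well-defined and that distinct generic Gauss fibers cannot map to the same congruence class: if $\q^1 \in \CT(F^1)$ and $\q^2 \in \CT(F^2)$ are congruent, then $\CT(\q^1) = \CT(\q^2)$ as affine classes (congruent configurations have the same affine orbit closure and the same full span), hence $\CT(F^1) = \CT(F^2)$, and by the injectivity half of Proposition~\ref{prop:GFs-bij-to-ACs} (Lemma~\ref{lem:ftoaInj}) we get $F^1 = F^2$.

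One technical caveat to address: Lemma~\ref{lem:ccomp} is phrased over $\RR$ but the argument there (the equivalence set is an algebraic variety with finitely many connected components, and locally rigid points are isolated in their congruence class within it) works verbatim over $\CC$; I would state this or cite the complex-setting discussion already invoked elsewhere in the paper. The main obstacle — and it is mild — is making sure the chain "generic Gauss fiber $F$ with $\x \in \overline{F^g}$" $\rightsquigarrow$ "full-span locally rigid $\q$ with $m_G(\q) = \x$" $\rightsquigarrow$ "congruence class of $\q$" is actually injective in $F$; this is exactly what Lemma~\ref{lem:ftoaInj} buys us, once we observe that a locally rigid $\q$ arising from $F$ via Lemma~\ref{lem:conv} genuinely lies in $\CT(F)$ and has full span, so its affine class is $\CT(F)$. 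Composing: (number of relevant $F$) $\le$ (number of relevant $\CT$) $=$ (number of congruence classes of full-span locally rigid $\q$ with $m_G(\q)=\x$) $< \infty$ by Lemma~\ref{lem:ccomp}.
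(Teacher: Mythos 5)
Your proof is correct and takes essentially the same route as the paper's: for each relevant generic Gauss fiber, extract a full-span, locally rigid configuration above $\x$ via Lemma~\ref{lem:conv}, use Lemma~\ref{lem:ftoaInj} to conclude that distinct fibers produce non-congruent configurations, and then bound the count via Lemma~\ref{lem:ccomp}. Your caveat about needing the complex-field version of Lemma~\ref{lem:ccomp} is a fair point that the paper leaves implicit.
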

Note that if $\x$ is a smooth point of $M_{d,G}$ then it is in a single generic 
Gauss fiber closure. 
But here, we are not making such assumptions on $\x$;
for example, we will allow for $\x$ that are measurements of frameworks that are (due to non-genericity) not globally rigid. This can occur
even in a generic affine class~\cite[Example 8.3]{cone}.

Informally, the key idea is that if $\x$ is in an infinite number of 
$\overline{F^g}$, then it has preimage configurations 
from an infinite number
of affine classes. 
From the full span  assumption, this gives us an infinite number
of preimage configurations, unrelated by congruence.  Each 
of these preimage configurations will have to be locally 
rigid from the assumed genericity of each Gauss fiber.
This would contradict the fact that, 
up to congruence, there can only be a finite 
number of locally rigid 
configurations with the same edge lengths.
\begin{proof}[Proof of Proposition \ref{prop:mainA}]
Let $\{F^i | i \in I\}$ be the collection of generic Gauss fibers
of $M_{d,G}$ containing $\x$ in their closures.  
A priori, the index set $I$ might be infinite.
For every such $F^i$, 
from Lemma~\ref{lem:conv}, 
there must be a configuration $\q^i$ in $\CT(F^i)$
that has full span, is 
locally rigid
and such that $m_G(\q^i)=\x$.
Since $\q$ has full affine span,  $\CT(\q^i)$ is well defined and is 
equal to $\CT(F^i)$.
From Lemma~\ref{lem:ftoaInj}, for any two such distinct
$F^i$ and $F^j$, we have $\CT(\q^i) \neq \CT(\q^j)$.
Thus $\q^i$ cannot be congruent to $\q^j$.

Suppose there were an infinite number of $F^i$.
Then there would be an infinite number of 
locally rigid congruence classes $[\q^i]$ that map to $\x$.
But this contradicts Lemma~\ref{lem:ccomp}.
\end{proof}

\begin{proposition}
\label{prop:mainB}
Let $G$, an ordered graph on $n \ge d+2$ vertices with $m$ edges,
be generically globally rigid in $\CC^d$.
There is an $\x$, generic in $M_{d-1,G}$, such 
there are an infinite number of Gauss fibers $F$ of $M_{d,G}$
with $\x$ in  $\overline{F^g}$.
\end{proposition}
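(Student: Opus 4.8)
The plan is to produce the point $\x$ explicitly and to count the generic affine classes whose Gauss fibers pass through it. Fix a configuration $\tilde\p$ in $\CC^{d-1}$ that is generic as a $(d-1)$-dimensional configuration, and let $\hat\p$ be the configuration in $\CC^d$ obtained by appending a $0$ coordinate; it has affine span of dimension exactly $d-1$. Set $\x := m_G(\hat\p)$. Since the last coordinate of $\hat\p$ vanishes, $\x$ is also the vector of squared edge lengths of $\tilde\p$ regarded as a configuration in $\CC^{d-1}$, so by Lemma~\ref{lem:genMap} applied in dimension $d-1$, $\x$ is generic in $M_{d-1,G}$. It suffices to exhibit infinitely many generic Gauss fibers $F$ of $M_{d,G}$ with $\x\in\overline{F^g}$.

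\textbf{A linear-algebra characterization and a Grassmannian dictionary.} For a configuration $\q$ on $n$ vertices let $K(\q)$ be its space of affine dependencies $\{(c_i)_i : \sum_i c_i = 0,\ \sum_i c_i \q_i = 0\}$, a subspace of the hyperplane $H = \{(c_i)_i : \sum_i c_i = 0\}\cong\CC^{n-1}$. An affine self-map of $\CC^d$ carrying a full-span $\p^0$ to $\hat\p$ exists if and only if it respects every affine dependency of $\p^0$, i.e.\ if and only if $K(\p^0)\subseteq K(\hat\p)$ (since $\p^0$ has full span such a map is automatically defined on all of $\CC^d$). Hence $\hat\p\in\overline{\CT(\p^0)}$ exactly when $K(\p^0)\subseteq K(\hat\p) = K(\tilde\p)$, where $\dim K(\p^0) = n-1-d$ and $\dim K(\tilde\p) = n-d$. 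Moreover, among full-span configurations the affine class $\CT(\p^0)$ is recovered from $K(\p^0)$ alone, and every $(n-1-d)$-dimensional subspace of $H$ occurs as some $K(\p^0)$; this yields a dictionary between affine classes of $d$-dimensional configurations and points of $\Gr(n-1-d, H)$, both of dimension $(n-1-d)d$, and likewise between affine classes of $(d-1)$-dimensional configurations and points of $\Gr(n-d, H)$. Thus the affine classes $\CT$ of $d$-configurations with $\hat\p\in\overline{\CT}$ correspond exactly to $\Gr(n-1-d, K(\tilde\p))$, a family of dimension $n-1-d$.

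\textbf{The genericity bookkeeping (the main obstacle).} The step requiring the most care is that these affine classes are actually \emph{generic}, so that the associated Gauss fibers are generic Gauss fibers and Remark~\ref{rem:rv2} applies. Because $\tilde\p$ is generic, $K(\tilde\p)$ is a generic point of $\Gr(n-d, H)$; and a subspace chosen generically inside a generic $(n-d)$-dimensional subspace of $H$ is still generic in $\Gr(n-1-d, H)$. To see this, pass to the irreducible flag variety $\mathcal{J} = \{(K',K'') : \dim K' = n-1-d,\ \dim K'' = n-d,\ K'\subseteq K''\}$, which fibers over $\Gr(n-d, H)$ with fiber $\Gr(n-1-d, n-d)$ and also projects dominantly onto $\Gr(n-1-d, H)$: a point obtained by choosing $K''$ generic and then $K'\subseteq K''$ generic over $\QQ(K'')$ has transcendence degree $\dim\mathcal{J}$, hence is generic in $\mathcal{J}$, hence has $K'$ generic in $\Gr(n-1-d, H)$. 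Finally, a subspace $K$ generic in $\Gr(n-1-d, H)$ really is $K(\p^0)$ for a $\QQ$-generic $\p^0$: choose an isomorphism $H/K\cong\CC^d$ and a basepoint generically over $\QQ(K)$, so that the transcendence degrees $(n-1-d)d + d^2 + d = dn$ add up correctly and the resulting $\p^0$ has algebraically independent coordinates over $\QQ$. Hence every generic point of $\Gr(n-1-d, K(\tilde\p))$ produces a generic affine class $\CT$ with $\hat\p\in\overline{\CT}$; the naive worry that ``$K(\p^0)$ constrained to lie inside a fixed subspace'' forces non-genericity is defused precisely because that fixed subspace $K(\tilde\p)$ is itself generic.

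\textbf{Conclusion.} Since $n\ge d+2$, we have $\dim\Gr(n-1-d, K(\tilde\p)) = n-1-d\ge 1$, so there are infinitely many generic affine classes $\CT$ with $\hat\p\in\overline{\CT}$. For each such $\CT$, let $F$ be the generic Gauss fiber of $M_{d,G}$ with $\CT(F) = \CT$ (Proposition~\ref{prop:GFs-bij-to-ACs}); distinct classes give distinct fibers. By Remark~\ref{rem:rv2} (using that $G$ is generically globally rigid) together with Remark~\ref{rem:rv1}, $\overline{F^g} = \overline{F} = m_G(\overline{\CT})$, which contains $m_G(\hat\p) = \x$. Therefore $\x$ lies in $\overline{F^g}$ for infinitely many generic Gauss fibers $F$, proving the proposition.
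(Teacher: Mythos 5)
Your proof is correct, and it takes a genuinely different route from the paper's. The paper starts with a generic $d$-dimensional configuration $\p$, projects to $\q$ in $\CC^{d-1}$, and works with the space of lifts $\CL(\p):=\pi^{-1}(\q)$, which is an $n$-dimensional affine space. It then uses a soft dimension argument: any finite collection of affine classes intersects $\CL(\p)$ in proper subvarieties (each intersection has dimension $d+1 < n$), so they cannot cover the dense set of generic configurations in $\CL(\p)$. Your proof instead parametrizes the affine classes with $\hat\p$ in their closure \emph{exactly}, via the dictionary $K(\cdot)$ between full-span configurations and the Grassmannian $\Gr(n-1-d, H)$, identifying the relevant classes with $\Gr(n-1-d, K(\tilde\p))$, and then does careful genericity bookkeeping through a flag variety to show these are generic affine classes. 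The two approaches buy different things: the paper's is shorter and more elementary (no Grassmannian or flag-variety machinery), while yours is more structural and makes the parametrization explicit — in particular, your count $\dim\Gr(n-1-d, K(\tilde\p)) = n-1-d$ recovers, in cleaner form, what the paper's lift-space computation gives implicitly. One small simplification available to you: rather than the transcendence-degree argument that a generic $K\in\Gr(n-1-d,H)$ arises as $K(\p^0)$ for a generic $\p^0$, you could directly invoke Lemma~\ref{lem:preG} for the dominant $\QQ$-map $\p^0\mapsto K(\p^0)$; and Lemma~\ref{lem:closure} suffices in the final step in place of Remarks~\ref{rem:rv1} and~\ref{rem:rv2}, avoiding reliance on the ruled-varieties result.
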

The key idea is that if $\q$ is a configuration with a deficient affine span,
then there are an infinite number of configurations $\p$ with  full affine
spans such that $\q \in \overline{\CT(\p)}$. This will give us an infinite
number of generic Gauss fibers with $\x=m_G(\q)$ in their closures.
\begin{proof}
We start with a generic configuration $\p$.
Let $\pi$ be the projection
from $d$-dimensional configurations to $d-1$ dimensional configurations
that simply ignores 
the last spatial coordinate.
Let $\q := \pi(\p)$ and $\x := m_G(\q)$.
Since $\p$ is generic as a $d$-dimensional configuration, 
$\q$ is generic as a $(d-1)$-dimensional configuration, and
$\x$ is generic in $M_{d-1,G}$.

Let $F$ be the Gauss fiber of $M_{d,G}$ that contains $m_G(\p)$.
Since $\q \in \overline{\CT(F)}$, 
from Lemma~\ref{lem:closure}, $\x$ is in 
$\overline{F^g}$.
This gives us one fiber $F$ for the 
proposition. Now we show how to get more.

Define $\CL(\p):= \pi^{-1}(\q)$ to be the space of lifts of $\q$.
The space of lifts is an affine space that contains $\p$, and so,
by Lemma \ref{lem:dense}, $\CL(\p)$
contains a dense set of generic configurations.
Since $n \ge d+2$, we can find
an infinite number of $\p'$
that are generic configurations, are in 
$\CL(\p)$ and with each in a different
affine class.
(Any finite number of affine classes are contained in 
a finite number of strict subvarieties of the linear 
lifting space space, and thus cannot cover all of the
generic configurations).

For any such configuration, say $\p'$, that is not in $\CT(\p)$,
we can apply the same argument to get 
another Gauss fiber $F'$
with $\x$ in $\overline{F'^g}$.
From Lemma~\ref{lem:ftoa}, we have $F \neq F'$.
Since we can do this endlessly, we obtain our infinite number of 
fibers for $\x$.
\end{proof}

\begin{remark}
 Any $\x$ meeting the hypotheses of Proposition~\ref{prop:mainB}
 cannot be a smooth point in $M_{d,G}$ (as it is in the
 closure of multiple Gauss fibers, and the Gauss map,
 where defined,
 is continuous).
 Since this $\x$ is
 also generic in $M_{d-1,G}$, we can conclude that
 all of $M_{d-1,G}$ lies in the singular locus of 
 $M_{d,G}$. 
\end{remark} 
\begin{remark}
To recap, 
every $\q$ with $m_G(\q)$ smooth and in
a generic Gauss fiber $F$ of a generically globally rigid 
graph $G$, has a full affine span,
is infinitesimally rigid, 
and 
is in $\CT(F)$. Such a framework $(G,\q)$ must
be globally rigid~\cite{conGR}.

The closure of $F$ is the linear space
$m_G(\overline{\CT(F)})$. 
This means that 
all points in
$\overline{F}$ are hit.
It also means that, for each point $\x \in \overline{F}$, there
must be some point $\q$ in $\overline{\CT(F)}$
with $m_G(\q)=\x$.

Points that are  smooth in $M_{d,G}$
are in only one Gauss fiber and one
Gauss fiber closure.

Let $F$ be a generic Gauss fiber.
Let the ``bad'' points be  $B:= \overline{F}\setminus F$.
These are non-smooth in $M_{d,G}$.
Points that are in $B$ due to deficient
span will, generically, 
be in the closure of infinitely many distinct generic Gauss fibers. 
All ``other'' points in $B$
(no deficient span in the pre-image)
can be in only a 
finite number of generic $\overline{F}$.
(Any of the preimages $\q$ of these other bad points 
is also infinitesimally rigid.)
These other bad points can occur, say, when 
$(G,\q)$ is not globally rigid. 
(Note that global rigidity
of frameworks is not an 
affine invariant property~\cite[Example 8.3]{cone}.)

At non-generic Gauss fibers $F$ of $M_{d,G}$, most bets are off.
$F$ can be of some larger dimension, and conceivably 
be reducible. It is even conceivable that there are 
$\q$ that have full spans
 and are infinitesimally flexible but such that 
 $m_G(\q)$ 
 is still a smooth point of $M_{d,G}$
 (in such a non-generic $F$).
\end{remark}

The next lemmas will let us treat $G$ and $H$ symmetrically.  We 
start with a technical result about Gauss fiber dimension that 
allows us to identify generic global rigidity intrinsically
in $M_{d,G}$.
\begin{lemma}\label{lem:gauss-fiber-dim}
Let $G$ be a generically locally rigid 
graph with $n\ge d + 2$ vertices and $m$ edges.  Suppose 
that, at some generic $\p$ the shared stress kernel of $(G,\p)$ has 
dimension $k\ge d+1$.  Let $F$ be the Gauss fiber of $M_{d,G}$ 
containing $m_G(\p)$.  Then the dimension of $\overline{F^g}$
is $dk - \binom{d+1}{2}$.

\end{lemma}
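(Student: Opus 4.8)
The plan is to compute the dimension of $\overline{F^g}$ by realizing it, via Lemma~\ref{lem:closure}, as $m_G(\overline{\CT(\p)})$, and then to count the dimension of this linear image directly. By Lemma~\ref{lem:closure} (and Remark~\ref{rem:rv2}), $\overline{F^g} = \overline{F} = m_G(\overline{\CT(F)}) = m_G(\overline{\CT(\p)})$, so it suffices to compute $\dim m_G(\overline{\CT(\p)})$. Recall from the proof of Lemma~\ref{lem:linim} that this image is the image of the linear map $n_{G,\p}$ on the space of symmetric $d\times d$ matrices $\Q$, sending $\Q \mapsto (\tr(\Q\, \e_{ij}\e_{ij}^t))_{ij}$. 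So the dimension we want is $\binom{d+1}{2}$ minus the dimension of the kernel of $n_{G,\p}$, i.e. $\binom{d+1}{2} - \dim\{\Q = \Q^t : \e_{ij}^t \Q\, \e_{ij} = 0 \text{ for all edges } ij\}$.

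The main content is therefore to show that the kernel of $n_{G,\p}$ has dimension exactly $\binom{d+1}{2} - (dk - \binom{d+1}{2}) = (d+1)^2 - dk$; equivalently that $\dim\operatorname{im} n_{G,\p} = dk - \binom{d+1}{2}$. The natural route is to identify this image with (the measurement image of) the orbit of $\p$ under affine maps, and relate its dimension to the shared stress kernel via the known structure from \cite{conGR, ght}. Concretely: the shared stress kernel $K$ of $(G,\p)$ has dimension $k$, and contains the affine span of $\p$ together with the all-ones vector. A configuration $\q$ has the same space of equilibrium stress matrices as $\p$ (equivalently, $m_G(\q)$ lies in the same Gauss fiber closure, by Lemma~\ref{lem:stTan}) precisely when the columns of $\q$ (together with ${\bf 1}$) lie in $K$; the set of such $\q$ with full $d$-dimensional span is an orbit of the form ``pick a $d$-dimensional configuration supported on a $k$-dimensional coordinate space, then apply affine maps,'' and a dimension count on this orbit gives $dk$, of which $\binom{d+1}{2}$ is killed by congruence, leaving $dk - \binom{d+1}{2}$ for the measurement image. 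I would make this precise by noting $\overline{\CT(\p)}$ injects (after quotienting by the translations and by the isometry group) via $m_G$ onto $\overline{F}$ on a dense open set — $m_G$ restricted to a generic affine class has fibers exactly the congruence classes, because generic frameworks in the class are globally rigid by \cite{conGR}/Theorem~\ref{thm:sharedKernel} — so $\dim \overline{F} = \dim \overline{\CT(\p)} - \binom{d+1}{2}$, and then separately compute $\dim\overline{\CT(\p)}$ in terms of $k$.

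For that last computation I would argue $\dim\overline{\CT(\p)} = dk$. One inclusion: every $\q\in\overline{\CT(\p)}$ has its $d$ coordinate vectors and ${\bf 1}$ lying in the shared stress kernel $K$ (this is preserved under affine images since equilibrium stresses of the original $\p$ remain equilibrium stresses — cf. Lemma~\ref{lem:ir-aff} and the equilibrium condition), and the all-ones vector plus an arbitrary choice of $d$ further vectors in $K/\langle{\bf 1}\rangle$ gives at most $d\cdot k$ parameters after accounting for translation, but more carefully: $\overline{\CT(\p)}$ is by Lemma~\ref{lem:linim} a linear space, and it is exactly $\{\q : \text{columns of }\q \in K,\ {\bf 1}\text{-shift free}\}$, of dimension $d\cdot(k-1) + d = dk$ once translations are included (or $d(k-1)$ modulo translation; then $m_G$ forgets translation so we subtract only the remaining $\binom{d}{2}+d-\binom{d}{2}$... ) — I would be careful here to line up the bookkeeping so that quotienting $\overline{\CT(\p)}$ by the full $\binom{d+1}{2}$-dimensional isometry group and by nothing else yields $dk - \binom{d+1}{2}$. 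The cleanest statement: $\overline{\CT(\p)}$ modulo translations is a $d(k-1)$-dimensional linear space, $m_G$ is translation-invariant and has generic fibers equal to orthogonal congruence classes (dimension $\binom{d}{2}$), giving $\dim m_G(\overline{\CT(\p)}) = d(k-1) - \binom{d}{2} = dk - d - \binom{d}{2} = dk - \binom{d+1}{2}$, as claimed.

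The hard part will be justifying that $m_G$ restricted to $\overline{\CT(\p)}$ has generic fiber dimension exactly $\binom{d}{2}$ (equivalently, that generic members of a generic affine class are globally rigid, not merely locally rigid) — this is where \cite{conGR} and Theorem~\ref{thm:sharedKernel} are essential — and in handling the $k > d+1$ case where $(G,\p)$ is \emph{not} generically globally rigid, so that one must still know the generic configuration in the affine class is locally rigid (it is, by Lemma~\ref{lem:sard}/\ref{lem:conv}) and that its congruence class is the whole fiber over a dense open subset of the linear space $m_G(\overline{\CT(\p)})$. I expect the bookkeeping between "modulo translation" and "modulo full congruence" to be the only genuinely fiddly point; everything else is assembling Lemmas~\ref{lem:linim}, \ref{lem:closure}, \ref{lem:stTan}, Theorem~\ref{thm:sharedKernel}, and Remark~\ref{rem:rv2}.
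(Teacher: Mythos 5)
There is a genuine gap: you have conflated two different linear spaces, and the conflation is fatal exactly in the case the lemma was designed to handle.

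The set you need to work with is $K := \{\q : \text{every coordinate vector of } \q \text{ lies in the shared stress kernel of } (G,\p)\}$, the space of all configurations satisfying every equilibrium stress of $(G,\p)$. This $K$ has dimension $dk$. The affine class closure $\overline{\CT(\p)}$ is a \emph{subset} of $K$, always of dimension $d(d+1)$ (the columns of any affine image of $\p$ lie in the $(d+1)$-dimensional span of the columns of $\p$ together with $\mathbf{1}$). These coincide only when $k = d+1$, i.e., only when $G$ is generically globally rigid — which is precisely the case this lemma does \emph{not} assume. Your claim that $\overline{\CT(\p)}$ ``is exactly $\{\q : \text{columns of } \q \in K\}$'' is false whenever $k > d+1$, and the numerical consequence is decisive: your route through $n_{G,\p}$ (a linear map from the $\binom{d+1}{2}$-dimensional space of symmetric $d\times d$ matrices) can only produce an image of dimension $\le \binom{d+1}{2}$, which is strictly less than $dk - \binom{d+1}{2}$ when $k > d+1$. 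So the machinery you set up cannot even in principle yield the claimed answer outside the globally rigid case.

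Relatedly, you invoke Lemma~\ref{lem:closure} and Remark~\ref{rem:rv2} to write $\overline{F^g} = m_G(\overline{\CT(\p)})$, but both require generic global rigidity of $G$, which is not among the hypotheses here (and cannot be, since the lemma is used in Lemma~\ref{lem:both} to \emph{detect} global rigidity via Gauss fiber dimension). The paper's proof sidesteps all of this: it defines $K$ directly, shows $F^g = m_G(K^g)$ by mimicking the argument of Proposition~\ref{prop:GFs-bij-to-ACs}, and then computes $\dim m_G(K)$ from the rank of the restricted differential $dm_G|_K$ at a generic point. The kernel of that differential is exactly the trivial infinitesimal flexes (dimension $\binom{d+1}{2}$), by \emph{infinitesimal} rigidity of generic $\p$, which follows from generic local rigidity alone. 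Your parallel plan — arguing the generic fiber of $m_G|_K$ is a congruence class via global rigidity from \cite{conGR} — would again need $G$ generically globally rigid. (If you reformulate to only need that generic members of $K$ are \emph{locally} rigid, the fiber dimension bound does come out right, but that is in substance the paper's infinitesimal-rank argument.) The arithmetic $d(k-1) - \binom{d}{2} = dk - \binom{d+1}{2}$ you carry out at the end is fine; it just needs to be performed on $K$, not on $\overline{\CT(\p)}$.
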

\begin{proof}
Let $\p$ be a generic configuration.  Define $K$ 
to be the space of configurations $\q$ such that 
$(G,\q)$ satisfies all the equilibrium stresses
$(G,\p)$ does.  This $K$ is a linear space of 
dimension $dk$.  

As discussed in 
Definition \ref{def:stress-kernel}, $k$, the dimension of the 
shared stress kernel, is at least $d+1$.
Hence, $K$ is of dimension at least $d(d+1)$. Moreover
(see Definition~\ref{def:stress})
$K$ must
include all affine images of $\p$, including
all $\q$ that are congruent to $\p$.

Let $K^g$ be the configurations
in $K$ that are generic in configuration space, From Lemma~\ref{lem:dense}, 
$K^g$ is a dense subset of $K$, which is closed, and so 
$\overline{K^g} =K$.  

Let 
$F$ be the Gauss fiber of $M_{d,G}$ containing $m_G(\p)$; $F$
is generic because $\p$ is.  Now that we know $F$ is a generic
affine class, we can show that  $F^g = m_G(K^g)$
by following the proof of the same statement in 
Proposition~\ref{prop:GFs-bij-to-ACs}.
This gives us 
$\overline{F^g}=
\overline{m_G(K^g)}=
\overline{m_G(\overline{K^g})}=
\overline{m_G(K)}$.
The second equality
is due to continuity.

From the above, we get
$\dim(\overline{F^g})=
\dim(\overline{m_G(K)}) = 
\dim(m_G(K))$.  The second equality
is due the fact that a constructible set
and its Zariski closure have the same 
dimension.
(In the case
that $G$ is generically globally rigid,
then $K=\overline{\CT(F)}$ and due to
Lemmas~\ref{lem:linim} and \ref{lem:closure} we actually have
$\overline{F^g} = m_G(K)$.)

By generic local rigidity of $G$, we 
know that the maximum rank of 
$dm_G$, restricted to its action on $K$, is 
$dk - \binom{d+1}{2}$.  
(Recall that $K$ includes all configurations that
are congruent to $\p$.
It follows that, 
at  $\p$, the 
differential of $m_G$, acting
as a map restricted to $K$, 
has a kernel of
dimension at least $\binom{d+1}{2}$, corresponding to the trivial infinitesimal flexes.
If this kernel at our generic $\p$
were any larger, $(G,\p)$
could not be infinitesimally rigid.)

Using the constant rank theorem and Sard's Theorem as 
in the proof of Lemma \ref{lem:infrig}, we conclude that 
the dimension of $\overline{F^g}$ is 
exactly this size.
\end{proof}
\begin{remark}
Using the fact that, in fact, $F$ is irreducible
(see Remark \ref{rem:rv1}), 
we can improve Lemma \ref{lem:gauss-fiber-dim} to 
give the dimension of $\overline{F}$ 
instead of 
$\overline{F^g}$.
\end{remark}

We now interpret Lemma~\ref{lem:gauss-fiber-dim}
in terms of generic global rigidity.
\begin{lemma}
\label{lem:both}
Let $G$, an ordered graph on $n \ge d+2$ vertices with $m$ edges,
be generically globally rigid in $\CC^d$.
Let $H$ be some 
ordered graph also with $n$ vertices
with $m$ edges.
Suppose   $M_{d,G} = M_{d,H}$.
Then $H$ is also generically globally rigid in $\CC^d$.
\end{lemma}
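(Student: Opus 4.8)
The plan is to use Lemma~\ref{lem:gauss-fiber-dim} to read off generic global rigidity of a graph purely from the geometry of its measurement variety, and then apply this to both $G$ and $H$. First I would observe that $H$ is generically locally rigid in $\CC^d$: by Lemma~\ref{lem:infrig}, $M_{d,G}$ has dimension $dn - \binom{d+1}{2}$ since $G$ is generically globally rigid (hence generically locally rigid), and since $M_{d,H} = M_{d,G}$ has this same (maximal) dimension and $H$ has only $n$ vertices, Lemma~\ref{lem:infrig} forces $H$ to be generically locally rigid. So both $G$ and $H$ fall under the hypotheses of Lemma~\ref{lem:gauss-fiber-dim}.

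Next I would take a generic point $\x$ in the common variety $V := M_{d,G} = M_{d,H}$ and let $F$ be the (generic) Gauss fiber of $V$ containing $\x$. The key point is that $F$ and hence $\dim(\overline{F^g})$ are intrinsic to $V$ — they do not depend on whether we view $V$ as coming from $G$ or from $H$. Applying Lemma~\ref{lem:gauss-fiber-dim} with the graph $G$: since $G$ is generically globally rigid, Theorem~\ref{thm:sharedKernel} gives a generic configuration whose shared stress kernel has dimension exactly $d+1$, and moving $\x$ if necessary to be the image of such a configuration (a generic one still maps to a generic point by Lemma~\ref{lem:genMap}), we get $\dim(\overline{F^g}) = d(d+1) - \binom{d+1}{2}$. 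Now apply Lemma~\ref{lem:gauss-fiber-dim} with the graph $H$ to the \emph{same} fiber $F$: it tells us $\dim(\overline{F^g}) = dk_H - \binom{d+1}{2}$, where $k_H$ is the dimension of the shared stress kernel of $(H,\q)$ for a generic $\q$ with $m_H(\q) = \x$. Comparing the two expressions forces $k_H = d+1$.

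Finally, by Theorem~\ref{thm:sharedKernel} (the contrapositive of the second part), a graph all of whose generic frameworks have shared stress kernel of dimension $> d+1$ is not generically globally rigid; equivalently, since $(H,\q)$ at some generic $\q$ achieves shared stress kernel dimension exactly $d+1$, $H$ must be generically globally rigid in $\CC^d$, as desired. The one point requiring a little care — and the main potential obstacle — is making sure that the generic point $\x$ chosen so that its $G$-preimage has minimal shared stress kernel is also a point whose $H$-preimage is generic, so that Lemma~\ref{lem:gauss-fiber-dim} applies symmetrically; this follows because a generic point of $V$ has this property with respect to either graph (Lemmas~\ref{lem:genhit} and~\ref{lem:genMap}, together with the fact that ``shared stress kernel of dimension $d+1$'' holds at every generic framework once it holds at one, Remark~\ref{rem:shared}), so a single generic $\x$ works for both.
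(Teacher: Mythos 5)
Your proposal is correct and follows essentially the same route as the paper: establish that $H$ is generically locally rigid via Lemma~\ref{lem:infrig}, then apply Lemma~\ref{lem:gauss-fiber-dim} twice to the same generic Gauss fiber (once reading the shared stress kernel of a generic $G$-preimage, once of a generic $H$-preimage) and invoke Theorem~\ref{thm:sharedKernel} to conclude. The paper states this a little more tersely, starting from $\x = m_G(\p)$ for generic $\p$ and citing Lemma~\ref{lem:preG} for the generic $H$-preimage, but the substance and the lemmas invoked are identical to yours.
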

\begin{proof}
Since $M_{d,G} = M_{d,H}$, they, in particular have the same 
dimension.  If $G$ is generically globally rigid, then it is also
generically locally rigid.  By Lemma \ref{lem:infrig} 
this implies that $H$ 
(having the same number of vertices as $G$)
is also generically locally rigid.  

Because $M_{d,G}$ and $M_{d,H}$ are the same variety, 
the generic Gauss fiber $F$ of $M_{d,G}$ containing 
$\x := m_G(\p)$ is also the generic Gauss fiber of 
$M_{x,H}$ containing $\x$
(and $\x$ will have some  other generic preimage $\q$ under $m_H(\cdot)$ by Lemma \ref{lem:preG}).

Lemma \ref{lem:gauss-fiber-dim} then lets us compute 
the dimension of $\overline{F^g}$ two different ways,
using the shared stress kernel of $(G,\p)$ and $(H,\p)$,
respectively.  Since $G$ is generically globally 
rigid, Theorem \ref{thm:sharedKernel} implies its
shared stress kernel is $(d+1)$-dimensional.  To 
avoid a contradiction from Lemma \ref{lem:gauss-fiber-dim},
$(H,\p)$ must also have a shared stress kernel of 
dimension $d+1$, in which case Theorem \ref{thm:sharedKernel}
implies that $H$ is also generically globally rigid.%
\end{proof}


With all this in hand, we can complete the proof
the main proposition
of this section.

\begin{proof}[Proof of Proposition~\ref{prop:induct}]
From Proposition~\ref{prop:mainB}, there is a point $\x$ generic in 
$M_{d-1,G}$ in an infinite number of $\overline{F^g}$.
From Lemma~\ref{lem:both} and Proposition
\ref{prop:mainA}, $\x$ must also be in $M_{d-1,H}$. 
Since $\x$ is generic, this implies that 
 $M_{d-1,G} \subseteq M_{d-1,H}$ (otherwise the equations of $M_{d-1,H}$ would
certify $\x$ as non-generic).
From Lemma~\ref{lem:both} we can apply the same argument in the other
direction to conclude
 $M_{d-1,H} \subseteq M_{d-1,G}$.

For the induction, we use Lemma~\ref{lem:descend}.
\end{proof}

\section{$d=1$}\label{sec:d=1}

In this section, we prove the following proposition:
\begin{proposition}
\label{prop:iso}
Let $G$ and $H$ be 
ordered graphs, with $\ge 3$ vertices.
Suppose that
$G$ is $3$-connected,
$H$ has no isolated vertices, 
and 
 $M_{1,G} =M_{1,H}$.
Then
there is a vertex relabeling under which $G=H$.
\end{proposition}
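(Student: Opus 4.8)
The plan is to recover the graphic matroid of $G$ from the variety $M_{1,G}$ and then to invoke Whitney's $2$-isomorphism theorem. The first point is that the measurement map factors as $m_G = \mathrm{sq}\circ \ell_G$, where $\ell_G\colon \CC^n \to \CC^m$ is the \emph{linear} map $\p\mapsto (\p_i-\p_j)_{ij}$ for some fixed choice of edge orientations and $\mathrm{sq}\colon\CC^m\to\CC^m$ squares every coordinate. Writing $L_G := \mathrm{im}(\ell_G)$, this is the cut space of $G$, i.e.\ the row space of its oriented incidence matrix, a linear subspace of $\CC^m$. Since $\mathrm{sq}$ is a finite, hence closed, morphism, $M_{1,G} = \overline{\mathrm{sq}(L_G)} = \mathrm{sq}(L_G)$; likewise $M_{1,H} = \mathrm{sq}(L_H)$.

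Next I would recover $L_G$ up to coordinate sign flips. For $\epsilon\in\{\pm 1\}^m$ let $\epsilon\cdot L_G$ be the image of $L_G$ under the corresponding sign changes of the coordinates of $\CC^m$. A direct computation shows $\mathrm{sq}^{-1}(\mathrm{sq}(L_G)) = \bigcup_{\epsilon}\epsilon\cdot L_G$, a finite union of linear subspaces all of the same dimension $\dim L_G$, so its irreducible components are precisely the distinct subspaces among the $\epsilon\cdot L_G$. Taking $\mathrm{sq}^{-1}$ of the hypothesis $M_{1,G}=M_{1,H}$ and comparing irreducible decompositions forces $\dim L_G = \dim L_H$ and then $L_H = \epsilon_0\cdot L_G$ for some $\epsilon_0\in\{\pm1\}^m$.

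The next step is to pass to matroids. The matroid of a linear subspace of $\CC^m$ is unchanged by a diagonal rescaling of the coordinates, and in particular by a sign flip, so $L_G$ and $L_H=\epsilon_0\cdot L_G$ determine the same matroid on the common ordered edge set $[m]$. But the matroid of the cut space $L_G$ is the column matroid of the oriented incidence matrix of $G$, which is exactly the graphic matroid $M(G)$ — a set of edges is independent iff it is a forest — and similarly for $H$. Hence $M(G)=M(H)$ as matroids on $[m]$. Comparing ranks, and using that $M(G)$ is connected because $G$ is $3$-connected together with the hypothesis that $H$ has no isolated vertices, shows that $G$ and $H$ have the same number of vertices and that $H$ is connected.

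Finally I would apply Whitney's $2$-isomorphism theorem in its labeled form: two graphs on the same edge set with the same cycle matroid differ by a sequence of vertex identifications/cleavings at cut vertices and Whitney twists across $2$-separations, and when one of the graphs is $3$-connected none of these moves is available, so there is a bijection of vertex sets inducing the identity on $[m]$. That bijection is the required vertex relabeling under which $G=H$. I expect the main obstacle to be the bookkeeping needed to invoke Whitney's theorem in the labeled (same ground set) setting and to dispatch small or degenerate cases, together with pinning down the identification ``matroid of the cut space $=$ graphic matroid of $G$''; the algebraic-geometry steps (closedness of $\mathrm{sq}$, the irreducible-component argument) are routine.
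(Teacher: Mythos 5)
Your proof is correct, and it reaches the same waypoint as the paper --- the graphic matroids of $G$ and $H$ agree on the shared ordered edge set, whence Whitney's $2$-isomorphism theorem supplies the vertex relabeling --- but the route to that matroid agreement is genuinely different. The paper works directly with coordinate-subspace independence in $M_{1,G}$ (Definition~\ref{def:dependent-axes}): Lemma~\ref{lem:ind} shows that a set of edge coordinates is independent in $M_{1,G}$ precisely when those edges form a forest, Lemma~\ref{lem:cycle} converts this into a cycle isomorphism, and Theorem~\ref{thm:whit} finishes. You instead factor $m_G=\mathrm{sq}\circ\ell_G$, note that $M_{1,G}=\mathrm{sq}(L_G)$ because coordinatewise squaring is a finite (hence closed) morphism, and recover the cut space $L_G$ up to coordinate sign flips as the irreducible components of $\mathrm{sq}^{-1}(M_{1,G})$, then pass to the matroid. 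Your conclusion is strictly stronger than matroid agreement: $M_{1,G}$ determines the linear space $L_G$ itself up to sign, which gives a cleaner structural picture (the $d=1$ measurement variety is exactly the coordinatewise square of the cut space). What the paper's route buys in exchange is economy of machinery: it needs only elementary dimension counts on projections and no irreducible decompositions. One thing you make explicit that the paper leaves to the statement of Theorem~\ref{thm:whit} is the deduction, from matroid rank and matroid connectivity, that $H$ is connected and that $G$ and $H$ have the same number of vertices; this is indeed necessary before Whitney's theorem in its labeled form can be applied, since Proposition~\ref{prop:iso} does not hypothesize equal vertex counts.
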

This establishes the base case for an inductive 
proof of  Theorem \ref{thm:iso},
which we prove at the end of the section.

\begin{definition}\label{def:dependent-axes}
Let $V\subseteq \CC^N$ be an irreducible affine variety.
Let $L$ be a linear subspace.  Let
$\pi_{L}$ denote the quotient map taking 
$\CC^N$ to $\CC^N/L$.

Let $[N] = \{1, \ldots, n\}$.
For each $I\subseteq [N]$ the 
\defn{coordinate subspace} $S_I$ is 
the linear span of the coordinate vectors indexed by $I$; 
i.e., $S_I = \lin\{e_i : i\in I\}$.  Define 
$\overline{I} := [N]\setminus I$ for $I\subseteq [N]$.
A coordinate subspace $S_I$ is \defn{independent}
in $V$ if the dimension of $\pi_{S_{\overline{I}}}(V)$ 
is $|I|$. Otherwise $S_I$ is \defn{dependent} in $V$.
\end{definition}

\begin{lemma}
\label{lem:ind}
Let $E'$ be subset of the edges of $G$. 
$S_{E'}$ is independent in $M_{1,G}$ iff the edges of $E'$ form a 
forest over the vertices of $G$.
\end{lemma}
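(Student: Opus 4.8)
The plan is to identify the measurement variety $M_{1,G}$ concretely and then read off forest-independence from its defining structure. Fix the ordering $G = \{E_1, \ldots, E_m\}$. For a $1$-dimensional configuration $\p = (\p_1, \ldots, \p_n) \in \CC^n$, the coordinate of $m_G(\p)$ on edge $E_k = \{i,j\}$ is $(\p_i - \p_j)^2$. Write $x_k$ for the coordinate on the $k$-th axis of $\CC^m$. The key observation is that $M_{1,G}$ is the Zariski closure of the image of the map $\p \mapsto ((\p_i-\p_j)^2)_{\{i,j\} \in E}$, and this image is governed by the \emph{signed} edge map $\p \mapsto (\p_i - \p_j)_{\{i,j\} \in E}$, whose image is exactly the cut space (row space of the incidence matrix) $Z \subseteq \CC^m$. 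Thus $M_{1,G}$ is the closure of the coordinate-wise square of $Z$, i.e. the closure of $\{(z_1^2, \ldots, z_m^2) : (z_1, \ldots, z_m) \in Z\}$.

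The first step is to make precise the relationship between independence of $S_{E'}$ in $M_{1,G}$ and the behavior of the projection $\pi_{S_{\overline{E'}}}$, which simply forgets all coordinates outside $E'$. We have $\dim \pi_{S_{\overline{E'}}}(M_{1,G}) = \dim$ of the closure of $\{(z_k^2)_{k \in E'} : z \in Z\}$, which (since squaring is a dominant finite map onto its image on each line, and more generally does not change dimension of a cone) equals $\dim \pi_{S_{\overline{E'}}}(Z)$, the dimension of the projection of the cut space onto the coordinates indexed by $E'$. Now $\pi_{S_{\overline{E'}}}(Z)$ is the cut space of the subgraph $(V, E')$ — equivalently, the row space of the incidence matrix of $(V, E')$ restricted to columns $E'$ — and its dimension is $n - c(E')$ where $c(E')$ is the number of connected components of the graph $(V, E')$ on all $n$ vertices, by the standard rank formula for incidence matrices over a field of characteristic $\ne 2$ (the ordered-graph convention and lack of self-loops guarantee this). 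Wait — more carefully: the dimension of the projection of the cut space onto edge-set $E'$ equals the rank of the incidence matrix of $(V, E')$, which is $n - c(E')$. On the other hand, $|E'| = \sum (\text{edges in each component})$, and $E'$ is a forest iff $|E'| = n - c(E')$, i.e. iff this rank equals $|E'|$; if $E'$ contains a cycle then the rank is strictly less than $|E'|$. This is exactly the condition $\dim \pi_{S_{\overline{E'}}}(M_{1,G}) = |E'|$, i.e. $S_{E'}$ independent.

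The main obstacle — and the only genuinely nontrivial point — is justifying that coordinate-wise squaring does not drop dimension: precisely, that $\dim \overline{\{(z_k^2)_{k\in E'} : z \in Z\}} = \dim \pi_{S_{\overline{E'}}}(Z)$. One clean way: the squaring map $\sigma \colon \CC^{E'} \to \CC^{E'}$, $\sigma(z) = (z_k^2)$, is a finite surjective morphism (it is quasi-finite — fibers have size at most $2^{|E'|}$ — and proper, being the restriction of a finite morphism of affine spaces), hence preserves dimension of any closed subvariety; apply this to the closure of $\pi_{S_{\overline{E'}}}(Z)$, noting $\sigma(\overline{\pi(Z)}) \supseteq \sigma(\pi(Z)) = \pi_{S_{\overline{E'}}}(M_{1,G} \text{ before closure})$ and take closures. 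I would state this as a short sublemma. With that in hand the chain of equalities $\dim \pi_{S_{\overline{E'}}}(M_{1,G}) = \dim \pi_{S_{\overline{E'}}}(Z) = \operatorname{rank}(\text{incidence matrix of } (V,E')) = n - c(E')$ finishes it: this equals $|E'|$ iff $(V, E')$ is acyclic iff $E'$ is a forest over the vertices of $G$.
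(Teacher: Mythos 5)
Your proof is correct, but it takes a genuinely different route from the paper. The paper's own argument is completely elementary and handles the two directions separately: if $E'$ is a forest, you traverse it and place vertices in $\CC^1$ one at a time, hitting any prescribed squared-length vector in $\CC^{|E'|}$, so the projection is surjective; if $E'$ contains a cycle $C$, the relation $\sum_{\text{edges of }C} \pm(\p_i - \p_j) = 0$ (for appropriate signs) yields a nontrivial polynomial relation among the squared lengths along $C$ (e.g.\ by taking the product over all sign choices), cutting the projection down to a proper subvariety. Your argument instead packages $M_{1,G}$ globally as the Zariski closure of the coordinate-wise square $\sigma$ of the coboundary/cut space $Z = \{(\p_i-\p_j)_{ij\in E}\}$, identifies $\pi_{S_{\overline{E'}}}(Z)$ with the cut space of $(V,E')$ of dimension $n - c(E')$ via the incidence-matrix rank formula, and uses that $\sigma$ is a finite morphism (hence closed and dimension-preserving on closed subvarieties) to get $\dim \pi_{S_{\overline{E'}}}(M_{1,G}) = n - c(E')$; this equals $|E'|$ exactly when $E'$ is acyclic. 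Both are correct. Yours buys a single uniform dimension computation (no case split, and it shows the projection is itself closed) at the cost of invoking finiteness of $\sigma$ and the rank formula; the paper's buys explicitness and brevity at the cost of a two-case argument in which the reverse direction quietly requires symmetrizing the cycle relation into a polynomial in the squared lengths. One cosmetic note: the rank of the oriented incidence matrix over $\CC$ is $n - c(E')$ regardless of characteristic, so the aside about characteristic $\ne 2$ is unnecessary here.
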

\begin{proof}
If $E'$ is a forest, then  given any target measurement values
in  $\CC^{|E'|}$, we
can traverse the forest and sequentially place the vertices 
in $\CC^1$ to achieve this measurement.

Conversely, if $E'$ is not a forest, then it contains a cycle $C$.
The sum of the vectors connecting the points of $C$ 
in $\CC^1$
must sum to zero,
giving us a non-trivial equation that must be satisfied.
\end{proof}

\begin{definition}
A subset of edges $E'$ of a graph $G$ is \defn{cycle supported}
if the edges of $E'$, in some order, form a simple cycle in $G$.
An edge bijection $\sigma$ between two graphs $G$ and $H$, 
is a  \defn{cycle isomorphism} if
it maps cycle supported sets, and only cycle supported sets, to  
cycle supported sets.
\end{definition}

\begin{lemma}
\label{lem:cycle}
Let $G$ and $H$ be  ordered graphs with $m$ edges.
Suppose $M_{1,G} =M_{1,H}$.
Then the mapping 
taking the ordered edges of $G$ to the ordered edges of $H$
is a cycle isomorphism.
\end{lemma}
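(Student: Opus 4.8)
\textbf{Proof proposal for Lemma~\ref{lem:cycle}.}

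The plan is to show that the identification of coordinate axes of $\CC^m$ with edges, which is forced once we know $M_{1,G}=M_{1,H}=:V$, is combinatorially rigid enough to transport the cycle structure. The key observation is that Lemma~\ref{lem:ind} characterizes, purely in terms of the variety $V$, which coordinate subspaces $S_{E'}$ are independent: namely, $S_{E'}$ is independent in $V$ if and only if the corresponding edge set is a forest in $G$, and equally if and only if the corresponding edge set is a forest in $H$. Since $G$ and $H$ have the same number of edges $m$, the tautological bijection $\sigma$ sending the $k$-th edge of $G$ to the $k$-th edge of $H$ (both indexing the $k$-th coordinate axis of $\CC^m$) therefore has the property that a subset $E'$ of edges of $G$ is a forest if and only if $\sigma(E')$ is a forest in $H$. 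In matroid language, $\sigma$ is an isomorphism of the graphic matroids $\mathcal{M}(G)$ and $\mathcal{M}(H)$: independent sets go to independent sets and dependent sets go to dependent sets, because independence in either matroid is detected by independence of the coordinate subspace in the common variety $V$.

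Next I would recover the cycles. In a graphic matroid, the circuits are exactly the simple cycles of the graph, and circuits are determined by the independence structure: a set is a circuit iff it is dependent but every proper subset is independent. Since $\sigma$ preserves independence and dependence, it preserves circuits, i.e., it carries simple cycles of $G$ to simple cycles of $H$ and carries non-cycle-supported sets to non-cycle-supported sets. This is precisely the statement that $\sigma$ is a cycle isomorphism in the sense of the definition preceding the lemma. Thus the tautological edge bijection induced by $M_{1,G}=M_{1,H}$ is a cycle isomorphism, which is what we must prove.

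The only real subtlety — and the step to be careful about — is making sure Definition~\ref{def:dependent-axes} and Lemma~\ref{lem:ind} are being applied with the right ambient space and the right identification: $M_{1,G}$ and $M_{1,H}$ both live in $\CC^m$, and the hypothesis $M_{1,G}=M_{1,H}$ means they are literally the same subset of this fixed $\CC^m$, so ``coordinate $k$'' has an unambiguous meaning and the two edge-labelings of the $m$ axes are being compared through it. Once that is pinned down, independence of $S_{E'}$ in $V$ is a property of $V$ alone (via $\dim \pi_{S_{\overline{E'}}}(V) = |E'|$), so Lemma~\ref{lem:ind} applied to $G$ and applied to $H$ give the two equivalences that must be chained together. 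I do not expect any genuine obstacle here; the content is entirely the translation of Lemma~\ref{lem:ind} into the language of cycle isomorphisms, and the bookkeeping of forests-versus-circuits in a graphic matroid. (One should also note in passing that isolated vertices play no role, since they contribute no edges; this is why the hypotheses on $G$ and $H$ in Proposition~\ref{prop:iso} — $3$-connectivity, no isolated vertices — are not needed for this particular lemma and will only enter when we pass from the cycle isomorphism to an actual graph isomorphism via Whitney's theorem.)
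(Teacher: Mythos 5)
Your proof is correct and rests on the same ingredient as the paper's: Lemma~\ref{lem:ind} characterizes independent coordinate subspaces of $V=M_{1,G}=M_{1,H}$ intrinsically as forests, so the tautological edge bijection preserves forests and hence preserves circuits. The paper packages the ``circuits are minimal dependent sets'' step as an explicit two-case contrapositive (either $C$ is a forest in $H$, or it properly contains a non-forest $C'$ that is a forest in $G$), whereas you invoke the graphic-matroid framework directly; the paper itself notes this equivalence in the remark following Theorem~\ref{thm:whit}, so the two are really the same argument in different dress.
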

\begin{proof}
Suppose the mapping  is not a cycle isomorphism. Then 
wlog, there is
a set of edges  $C$ that form
a simple cycle in $G$ and not $H$.

Suppose that this $C$ forms a forest in $H$.
Then from Lemma~\ref{lem:ind}, we have
$\pi_{S_{\overline{C}}}(M_{1,G})
\neq \pi_{S_{\overline{C}}}(M_{1,H})$ and thus
$M_{1,G} \neq M_{1,H}$.

Suppose instead that this $C$ is neither a simple cycle nor a forest in $H$,
then there must be an edge $e$ such that $C':=C-e$ is not a forest
in $H$, while $C'$
is a forest in $G$ (a simple cycle minus one edge is a path).
Then from Lemma~\ref{lem:ind}, we have
$\pi_{S_{\overline{C'}}}(M_{1,G})
\neq \pi_{S_{\overline{C'}}}(M_{1,H})$ and thus
$M_{1,G} \neq M_{1,H}$.
\end{proof}

\begin{figure}
\begin{center} \begin{tabular}{cccc}
  {  \includegraphics[scale=.16]{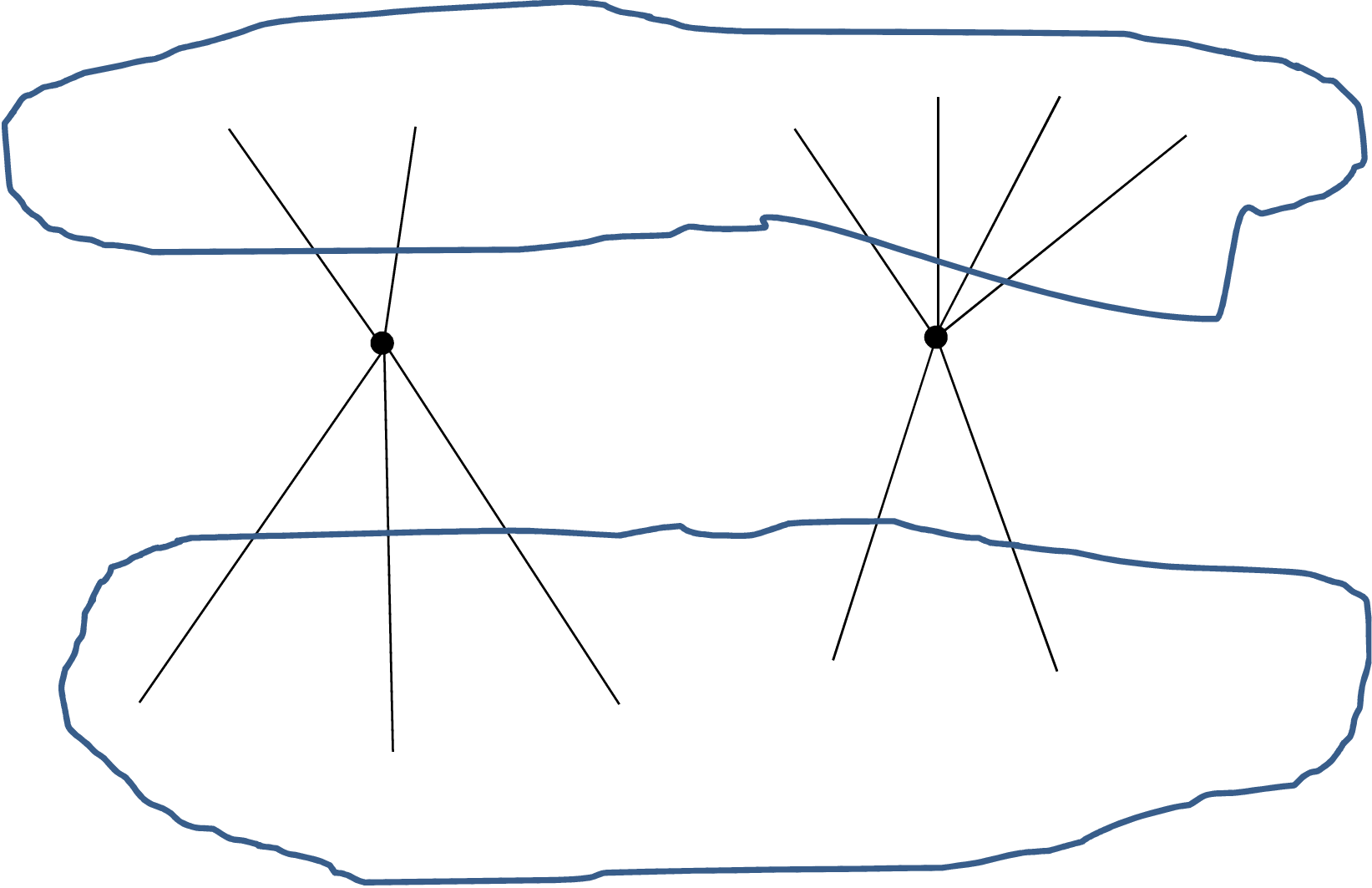}}&
  {  \includegraphics[scale=.16]{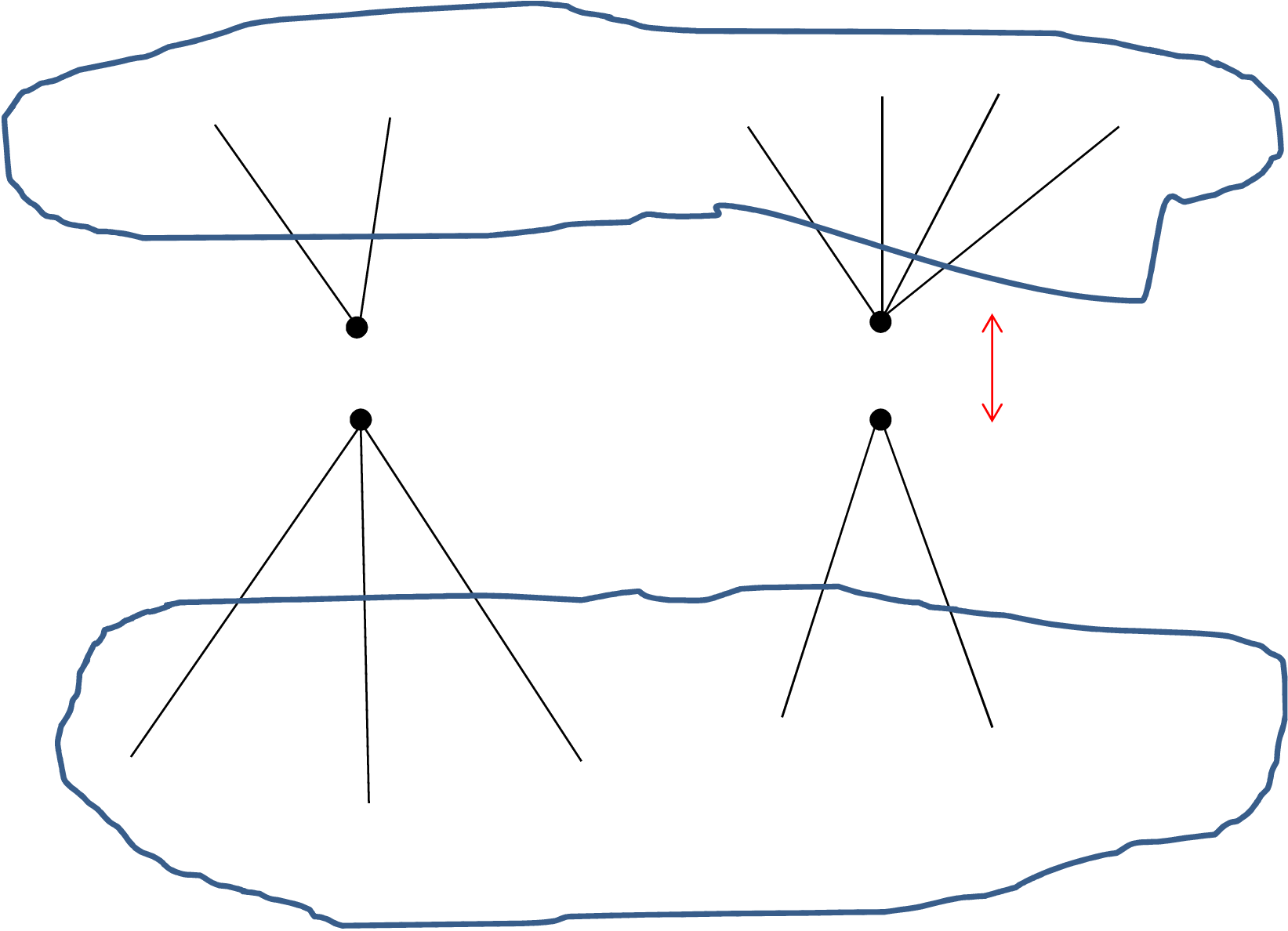}}&
  {  \includegraphics[scale=.16]{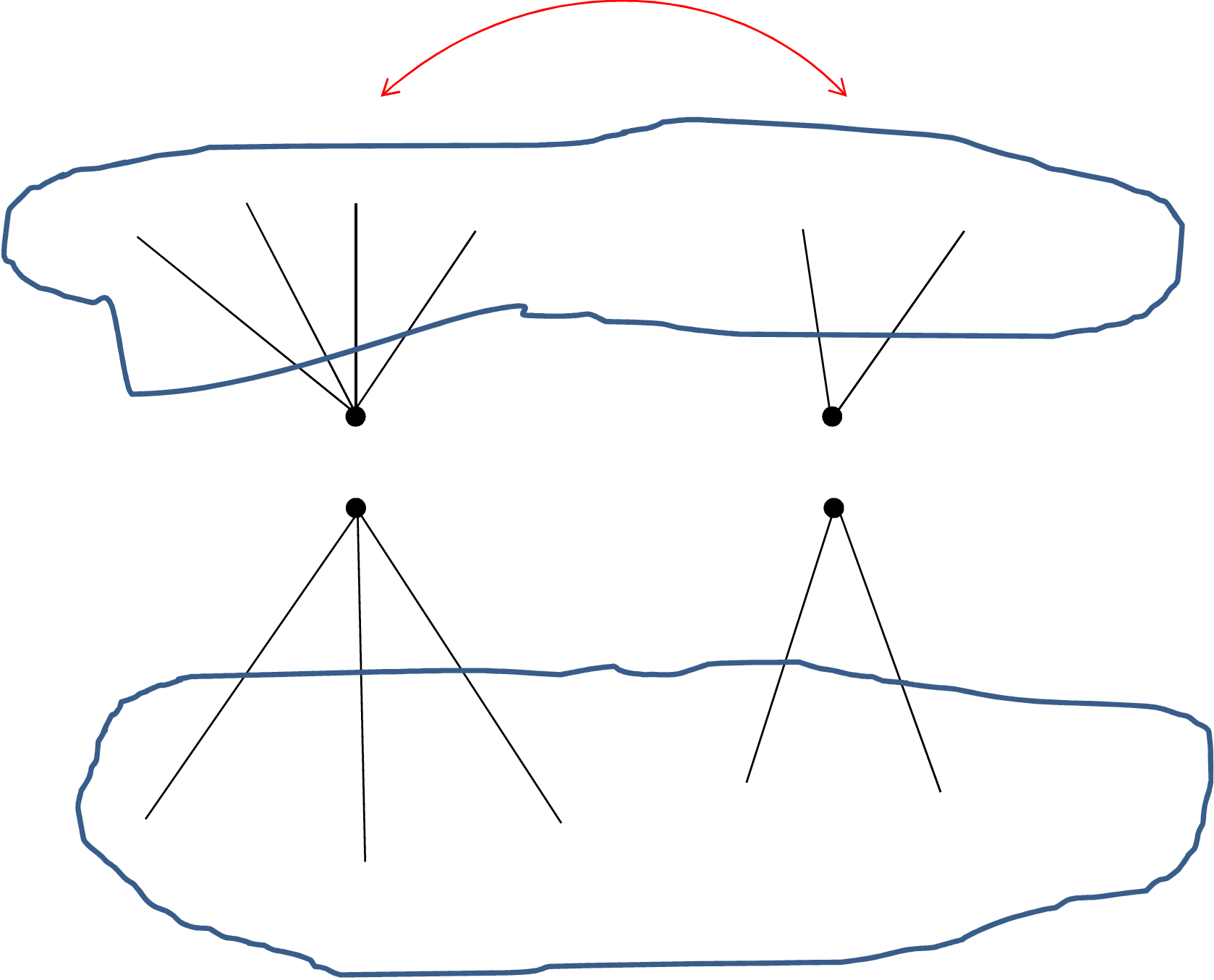}}&
  {  \includegraphics[scale=.16]{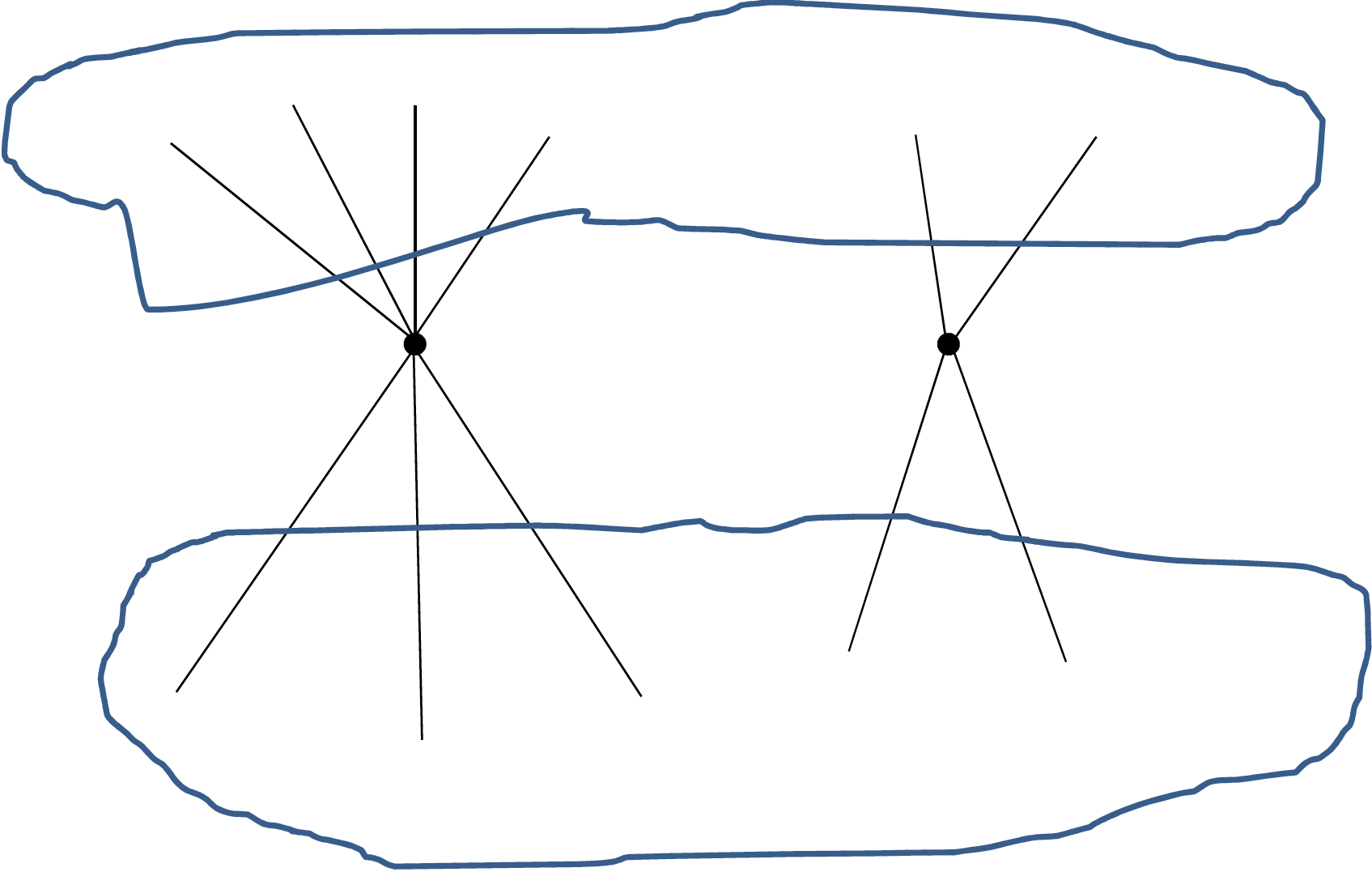}}
\\
  (a) & (b) &(c)&(d) \\
\end{tabular} \end{center}
  \caption{The reversal operation. The graphs, (a) and (d) are 2-isomorphic
  but not isomorphic.
Note that the edge lengths of the frameworks are unchanged under a
2-isomorphism.}
  \label{fig:reverse}
\end{figure}

A theorem of Whitney~\cite{W33}
(see also~\cite{sanders})
allows us to upgrade cycle isomorphisms
to graph isomorphisms.
\begin{theorem}
\label{thm:whit}
Let $G$ and $H$ be two graphs, with
$G$ being $3$-connected, and
$H$ with no isolated vertices.
An edge bijection that is a  cycle isomorphism 
must arise from a graph isomorphism.
\end{theorem}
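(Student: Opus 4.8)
The plan is to pass to cycle matroids. Let $M(G)$ be the cycle matroid of $G$ (ground set $E(G)$, with the forests of $G$ as independent sets---this is the notion of independence inside $M_{1,G}$ recorded in Lemma~\ref{lem:ind}), and likewise $M(H)$. The edge sets that are cycle supported are exactly the circuits of $M(G)$, so a cycle isomorphism $\sigma\colon E(G)\to E(H)$ is nothing more than a bijection carrying circuits onto circuits in both directions, i.e.\ an isomorphism of matroids $M(G)\to M(H)$. First I would record the standard consequences: $\sigma$ preserves rank and forests, and it carries \emph{cocircuits} to cocircuits, where the cocircuits of $M(G)$ are precisely the bonds (inclusion-minimal nonempty edge cuts) of $G$. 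Since $G$ is $3$-connected it is $2$-connected, bridgeless, and has at least four vertices; hence $M(G)$ is a connected matroid with no coloop, and therefore so is $M(H)$. As $H$ has no isolated vertices, this forces $H$ to be connected and bridgeless, and comparing the ranks of the two connected cycle matroids gives $|V(H)|=|V(G)|$. (Tiny cases such as $G=K_3$ are trivial.)

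The key step is to recover the vertex set intrinsically from $M(G)$. I would prove: \emph{in a $3$-connected graph $G$, the vertex stars $\delta(v)$ are exactly the non-separating cocircuits of $M(G)$}, i.e.\ the cocircuits $C^\ast$ for which the deletion $M(G)\setminus C^\ast$ is a connected matroid. For any bond $\delta(S)$ of a connected graph, both induced subgraphs $G[S]$ and $G[V\setminus S]$ are connected (else $\delta(S)$ would not be minimal), and $M(G)\setminus\delta(S)=M(G[S])\oplus M(G[V\setminus S])$; a direct sum of two matroids on nonempty ground sets is disconnected, so the deletion is disconnected unless one of $G[S],G[V\setminus S]$ has no edges, which---both being connected---means it is a single vertex, say $S=\{v\}$. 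In that case the deletion is $M(G-v)$, and this is connected precisely because $G-v$ is $2$-connected; this is exactly where $3$-connectivity of $G$ is indispensable. Since ``non-separating cocircuit'' is a property of the abstract matroid, $\sigma$ carries the vertex stars of $G$ bijectively onto the non-separating cocircuits of $M(H)$, which are at any rate a subset of the vertex stars of $H$. Because $G$ and $H$ are simple, connected, and bridgeless on at least four vertices, $H$ has exactly $|V(H)|=|V(G)|$ distinct vertex stars; a subset of a set of that size that is in bijection with the vertex stars of $G$ must be the whole set. So $\sigma$ restricts to a bijection between the vertex stars of $G$ and those of $H$ (and, incidentally, $H$ is $3$-connected as well).

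This bijection of stars induces a vertex bijection $\phi\colon V(G)\to V(H)$ via $\sigma(\delta(v))=\delta(\phi(v))$. Each edge of a loopless graph lies in exactly two vertex stars---those of its two endpoints---so for an edge $e=uv$ of $G$ the image $\sigma(e)$ lies in $\delta(\phi(u))\cap\delta(\phi(v))$ and in no other vertex star of $H$, which forces $\sigma(e)=\phi(u)\phi(v)$. Hence $\sigma$ is the edge map induced by $\phi$, and $\phi$ is a graph isomorphism $G\to H$, as claimed. The hard part of this program is the structural lemma of the second paragraph: pinning down the matroid-intrinsic description of vertex stars and checking that it uses $3$-connectivity exactly where needed. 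The hypothesis really is essential---for merely $2$-connected $G$ one can twist across a $2$-separation (the reversal operation of Figure~\ref{fig:reverse}) to produce non-isomorphic but cycle-isomorphic graphs---and this argument reproves the case of Whitney's $2$-isomorphism theorem that we need; alternatively one may simply invoke \cite{W33,sanders}.
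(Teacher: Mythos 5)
The paper does not prove Theorem~\ref{thm:whit} itself; it is Whitney's theorem, quoted from \cite{W33} (see also \cite{sanders}), so there is no in-paper argument to compare against. Your proposal supplies a correct, self-contained proof. The reduction from cycle isomorphisms to cycle-matroid isomorphisms is immediate, and the engine is the characterization of vertex stars in a $3$-connected graph as the non-separating cocircuits: for a bond $\delta(S)$ of a $2$-connected graph both sides $G[S]$ and $G[V\setminus S]$ are connected, the deletion $M(G)\setminus\delta(S)$ decomposes as $M(G[S])\oplus M(G[V\setminus S])$, which is disconnected unless one side has no edges and hence (being connected) is a single vertex; conversely, $3$-connectivity of $G$ guarantees that $G-v$ is $2$-connected, so deleting a vertex star leaves a connected matroid. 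The bookkeeping that transports this across $\sigma$ is sound: you derive $|V(G)|=|V(H)|$ from the ranks of the connected matroids, note that for $H$ the direct-sum argument still shows non-separating cocircuits are a subset of stars, use simplicity and minimum degree $\ge 2$ so that stars of $H$ are pairwise distinct, and then recover each edge's endpoints as the unique pair of stars containing it. This is essentially the standard modern matroid-theoretic route to the $3$-connected case of Whitney's $2$-isomorphism theorem (going back to Whitney and Tutte); compared with a bare citation it costs you the matroid-deletion and cocircuit formalism, but it makes explicit exactly where $3$-connectivity enters, which is a point the paper's Remark after the theorem only gestures at.
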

\begin{remark}
Another way to state Whitney's theorem is that if $G$ and $H$ 
have isomorphic graphic matroids and the same number of vertices,
and $G$ is $3$-connected, then $G$ and $H$ are isomorphic 
as graphs.  In particular, topological information 
contained in the ordering of the edges on a cycle isn't 
part of the 
hypothesis, nor did we consider it in Lemma \ref{lem:cycle}.

The notion of cycle isomorphism is equivalent to having isomorphic 
graphic matroids, so it could also be formulated in terms of  
``forest isomorphisms''.
\end{remark}
\begin{remark}
If $G$ is not $3$-connected, then there are cycle isomorphisms
between $G$ and non-isomorphic $H$.  Whitney \cite{Whitney} showed
that these belong to a restricted class of ``$1$-isomorphisms'' and 
``$2$-isomorphisms.'' See Figure~\ref{fig:reverse} for an example of
a pair of $2$-isomorphic graphs.
\end{remark}

With this in hand, we can prove the main proposition
of this section.

\begin{proof}[Proof of Proposition~\ref{prop:iso}]
From Lemma~\ref{lem:cycle} the mapping taking the edges of
$G$ to $H$ must be a cycle isomorphism. Then from the assumed
$3$-connectivity and Theorem~\ref{thm:whit} this mapping must
arise from a vertex relabeling.
\end{proof}

The main  structural theorem of
this paper now follows.

\begin{proof}[Proof of Theorem~\ref{thm:iso}]
From Proposition~\ref{prop:induct} we can reduce the problem
from $d$ dimensions down to $1$. Since $G$ is generically
globally rigid in $d \ge 2$ dimensions, from Theorem~\ref{thm:hen}
it must be $3$-connected. 
Since the number of vertices
in both graphs is the same
and $M_{1,G}=M_{1,H}$, from
Lemma~\ref{lem:infrig},
$H$ cannot have isolated vertices.
The result then follows from
Proposition~\ref{prop:iso}.
\end{proof}

As proven in the end of Section~\ref{sec:mv}, this immediately
proves the main result of this paper, Theorem~\ref{thm:main}. \qqed

\section{Bonus result}
\label{sec:bonus}

There is an interesting variant of Theorem~\ref{thm:iso} that was
originally reported in the unpublished manuscript~\cite{miso}.
For this theorem we will replace the hypothesis that $G$ is generically globally rigid in
$d$ dimensions with the far weaker one that $G$ is $3$-connected. 
However, we require  not only equality 
of  measurement varieties, but  also equality 
of Euclidean measurement sets.

\begin{definition}
Let $d$ be some fixed dimension and $n$ a number of vertices.
Let $G:= \{E_1,\ldots, E_m\}$ be  an ordered graph.
The ordering on the edges of $G$
fixes  an association between each edge in $G$ 
and a coordinate axis of $\RR^{m}$. 

We denote by $M^{\EE}_{d,G}$ the image of 
of $m^{\EE}_G(\cdot)$ over all real $d$-dimensional configurations.
We call this the (squared) \defn{Euclidean measurement set}
of $G$ (in $d$ dimensions). This is a real semi-algebraic set,
defined over $\QQ$.
 \end{definition}

\begin{theorem}
\label{thm:euc}
Let  $d$ be fixed.
Let $G$, an ordered 
$3$-connected graph on $n$ vertices with $m$ edges,
and $H$ some 
ordered graph with no 
isolated vertices and 
with $m$ edges.
Suppose $M^\EE_{d,G} = M^\EE_{d,H}$.
Then
there is a vertex relabeling of $H$ such that
$G=H$.
\end{theorem}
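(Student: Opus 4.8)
The plan is to mimic the proof of Theorem~\ref{thm:iso} but to work with the real Euclidean measurement sets in place of the complex measurement varieties, using the extra hypothesis $M^\EE_{d,G} = M^\EE_{d,H}$ to substitute for generic global rigidity everywhere the latter was used. The ultimate target is again to produce a cycle isomorphism between $G$ and $H$ and then invoke Whitney's theorem (Theorem~\ref{thm:whit}), so the whole argument funnels down to the $d=1$ situation exactly as before. Concretely, first I would establish the analogue of Proposition~\ref{prop:induct}: if $M^\EE_{d,G} = M^\EE_{d,H}$ then $M^\EE_{d-1,G} = M^\EE_{d-1,H}$. This is where the Euclidean hypothesis really earns its keep. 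A point $\x \in M^\EE_{d,G}$ lies in $M^\EE_{d-1,G}$ exactly when it has a preimage configuration with deficient affine span; I would characterize such points intrinsically inside the real set $M^\EE_{d,G}$, in the spirit of Propositions~\ref{prop:mainA} and \ref{prop:mainB}, by counting how many affine classes of full-span preimages sit above $\x$. The key geometric fact is the one already used: a generic configuration $\q$ with deficient span lies in the closure of infinitely many affine classes (lift the missing coordinate in infinitely many essentially different ways, using $n \ge d+2$), whereas a full-span preimage of a point not in $M^\EE_{d-1,G}$ forces local rigidity and hence, via Lemma~\ref{lem:ccomp}, only finitely many congruence classes — so only finitely many affine classes — of preimages.

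The reason $3$-connectivity alone suffices here, where Theorem~\ref{thm:iso} needed generic global rigidity, is that we are handed the \emph{full} semialgebraic set, not just a variety: for a point $\x$ in $M^\EE_{d,G}$ we literally know every real configuration mapping to it, so we do not need the stress-matrix machinery (Theorem~\ref{thm:sharedKernel}, Lemmas~\ref{lem:ftoa}, \ref{lem:both}) that was used to pin down the affine class of a preimage of a smooth point from the variety's geometry alone. In particular the dimension-counting argument (Lemma~\ref{lem:infrig}) must be replaced: since we no longer assume $G$ is generically locally rigid, $M^\EE_{d,G}$ may be lower-dimensional, but equality of the two semialgebraic sets still forces $M^\EE_{d,G}$ and $M^\EE_{d,H}$ to have equal dimension, and a straightforward argument shows $H$ has no isolated vertices iff $G$ doesn't (an isolated vertex in exactly one of the graphs would change which coordinate-forests are independent, contradicting set equality). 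One subtlety is that "generic in $M^\EE_{d,G}$" should be read as "generic configuration in $\RR^{dn}$, projected", i.e.\ the image of a generic real configuration, and I would check that such images are Zariski-dense in (the Zariski closure of) $M^\EE_{d,G}$, which is what lets the inclusion-then-equality step go through.

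Once the descent is in place, the base case $d=1$ is already done: Lemma~\ref{lem:cycle} only uses $M_{1,G} = M_{1,H}$, and $M^\EE_{1,G}$ determines $M_{1,G}$ (the complex measurement variety of a $1$-dimensional graph is cut out by the linear cycle relations of Lemma~\ref{lem:ind}, which are visible already from the real set), so the edge bijection $G \to H$ is a cycle isomorphism; then $3$-connectivity of $G$, no isolated vertices in $H$, and Theorem~\ref{thm:whit} give the vertex relabeling, finishing as in the proof of Theorem~\ref{thm:iso}. The main obstacle I expect is the descent step itself: carefully redoing the "finitely many affine classes above a full-span point" versus "infinitely many above a deficient-span point" dichotomy over $\RR$, where one must be mindful that local rigidity and the finiteness-of-components argument of Lemma~\ref{lem:ccomp} are genuinely real-analytic/real-algebraic statements, and that the infinitely-many-lifts construction of Proposition~\ref{prop:mainB} produces \emph{real} configurations in genuinely distinct affine classes — but all the needed ingredients are real-algebraic, so this goes through with the same bookkeeping, just without the stress-kernel detour.
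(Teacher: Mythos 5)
Your proposal follows a fundamentally different (and ultimately flawed) route from the paper, which here takes a much shorter path that you seem to have missed entirely. The paper's proof of Theorem~\ref{thm:euc} does \emph{not} descend to $d=1$. Instead, it exploits a feature of the real Euclidean measurement set that has no analogue in the complex setting: a coordinate projection $\pi_{S_{\overline{E'}}}(M^\EE_{d,G})$ equals the entire nonnegative octant if and only if $E'$ is a forest (Lemma~\ref{lem:ind2}). The ``only if'' direction works in \emph{every} dimension $d$, because a real cycle can never have exactly one edge of positive squared length and the rest zero. This gives a cycle isomorphism directly in dimension $d$ (Lemma~\ref{lem:cycle2}), and Whitney's theorem finishes immediately. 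No Gauss fibers, no stress kernels, no dimension descent, no genericity, no local rigidity. By contrast, the complex measurement set of a forest in dimension $d\ge 2$ really is all of $\CC^{|E'|}$ --- the octant obstruction vanishes over $\CC$ --- which is exactly why the complex proof of Theorem~\ref{thm:iso} needed the entire Gauss-fiber machinery to get down to $d=1$.

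Beyond missing the intended argument, your proposal has a genuine gap. You claim that because we have the full semialgebraic set $M^\EE_{d,G}$, ``we literally know every real configuration mapping to'' a given $\x$, so the stress-matrix machinery can be dispensed with. That is not so: $M^\EE_{d,G}$ is just a subset of $\RR^m$, and without knowing $G$ (which is exactly what we are trying to determine) you do not know the map $m^\EE_G$ and therefore cannot enumerate preimages of $\x$. The whole purpose of Lemmas~\ref{lem:ftoa}, \ref{lem:ftoaInj}, and \ref{lem:gauss-fiber-dim}, together with Theorem~\ref{thm:sharedKernel}, is to recover the affine-class structure of preimages \emph{intrinsically} from the variety's tangent data, and all of that analysis leans hard on generic global rigidity: it is what pins the shared stress kernel to dimension $d+1$ and makes the Gauss-fiber-to-affine-class correspondence a bijection. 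Under mere $3$-connectivity, Proposition~\ref{prop:mainA} is false --- the paper itself points this out with $K_{5,5}$ in Section~\ref{sec:rr} --- so the finite/infinite dichotomy you are proposing to use for the descent step simply does not hold. Your sketch of the descent would therefore not compile into a proof; fortunately, as the paper shows, you do not need to descend at all.
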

\begin{remark}
This theorem does not rule out the possibility that 
there are two non-isomorphic graphs $G$ and $H$
such that $M^\EE_{d,G} \cap M^\EE_{d,H}$ contains
a standard-topology open set.  This would 
imply that  $M_{d,G}$ is equal to $M_{d,H}$
even though  $M^\EE_{d,G} \neq M^\EE_{d,H}$.
In this case, there could be some generic Euclidean 
measurements that are achievable from both graphs
and some generic Euclidean measurements that are 
achievable only in one graph.
\end{remark}
As a result, Theorem~\ref{thm:euc} does not help us to prove 
Theorem~\ref{thm:main}.

The rest of this section proves Theorem \ref{thm:euc}.
The steps are similar to the ones we used in 
Section \ref{sec:d=1}, but in the present setting, 
they work immediately in dimensions greater than one.

\begin{lemma}
\label{lem:ind2}
Let $E'$ be subset of the edges of $G$. 
$\pi_{S_{\overline{E'}}}(M^{\EE}_{d,G})$ equals the entire first octant 
if and only if 
the edges of $E'$ form a 
forest over the vertices of $G$.
\end{lemma}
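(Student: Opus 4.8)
\textbf{Proof plan for Lemma~\ref{lem:ind2}.}

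The plan is to mirror the proof of Lemma~\ref{lem:ind}, upgrading it from the one-dimensional complex setting to the $d$-dimensional real (Euclidean) setting; the key difference is that squared Euclidean lengths are nonnegative, so the relevant target set is the first octant (closed positive orthant) rather than all of $\CC^{|E'|}$. Recall $\pi_{S_{\overline{E'}}}$ is the projection onto the coordinates indexed by $E'$, so that $\pi_{S_{\overline{E'}}}(M^{\EE}_{d,G})$ records exactly the squared lengths realizable along the edges of $E'$ by some real $d$-dimensional configuration (once we forget the lengths along the other edges).

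First I would prove the ``if'' direction. Suppose the edges of $E'$ form a forest $T$ on the vertices of $G$. Given any target tuple of nonnegative reals $(\ell_e)_{e\in E'}$, I build a configuration achieving them: choose one vertex per tree component, place it arbitrarily (say at the origin), and then traverse each tree outward from its root; when we reach an edge $e=\{i,j\}$ with $i$ already placed, we place $\p_j$ at $\p_i + \sqrt{\ell_e}\,u$ for any unit vector $u\in\RR^d$ (possible since $d\ge 1$), giving squared edge length exactly $\ell_e$. Vertices not incident to any edge of $E'$ may be placed anywhere. This realizes every point of the first octant, so the projection is the whole first octant. (Note we only need that the forest has no cycles; the freedom in $d\ge 1$ dimensions means no consistency constraint arises, and nonnegativity of the targets is exactly what makes $\sqrt{\ell_e}$ real.)

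Next the ``only if'' direction. Suppose $E'$ is not a forest, so it contains a cycle $C = (v_0, v_1, \dots, v_k = v_0)$. For any real configuration $\p$, the edge vectors $\p_{v_1}-\p_{v_0}, \dots, \p_{v_0}-\p_{v_{k-1}}$ sum to zero in $\RR^d$, hence their Gram relations force a nontrivial polynomial constraint among the squared lengths $\{m^{\EE}_e(\p): e\in C\}$. Concretely, the Cayley--Menger-type determinant (the Gram matrix of any $k-1$ of the edge vectors around the cycle must be positive semidefinite of rank $\le d$, but more simply: a cycle of edge-vectors summing to zero cannot have one edge strictly longer than the sum of the others, and in fact the squared lengths satisfy a polynomial equation when $d$ is small, or at minimum a strict inequality cutting out a proper subset). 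The cleanest uniform argument: the triangle-type inequality $\sqrt{\ell_{e_1}} \le \sum_{j\ge 2}\sqrt{\ell_{e_j}}$ holds for the edges $e_1,\dots$ of $C$, so the point with $\ell_{e_1}$ huge and the others tiny (which lies in the first octant) is not in $\pi_{S_{\overline{E'}}}(M^{\EE}_{d,G})$. Hence the projection is a proper subset of the first octant.

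The main obstacle is purely expository: making the ``only if'' direction airtight for all $d$ simultaneously. For $d=1$ the cycle gives a clean linear relation among $\pm\sqrt{\ell_e}$; for general $d$ one should argue that the closure of the realizable set still misses an open subset of the first octant. The safest route is the degenerate-configuration / triangle-inequality argument sketched above (picking a target tuple that violates $\sqrt{\ell_{e_1}}\le\sum_{j\ge2}\sqrt{\ell_{e_j}}$), which works uniformly in $d$ and does not require computing any Cayley--Menger determinant. Everything else is the same bookkeeping as in Lemma~\ref{lem:ind}.

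\begin{proof}
If $E'$ is a forest, then given any nonnegative target values in the first octant of $\RR^{|E'|}$, we may traverse each tree component, placing a root vertex arbitrarily and then, for each edge $\{i,j\}$ encountered with $\p_i$ already placed, setting $\p_j := \p_i + \sqrt{\ell_{ij}}\, u$ for a fixed unit vector $u\in\RR^d$; vertices incident to no edge of $E'$ are placed arbitrarily. This realizes the target, so $\pi_{S_{\overline{E'}}}(M^{\EE}_{d,G})$ is the entire first octant.

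Conversely, if $E'$ is not a forest, it contains a simple cycle with edge set $\{e_1,\dots,e_k\}$. For any real configuration $\p$, the corresponding edge vectors sum to $0$ in $\RR^d$, so by the triangle inequality $\sqrt{m^{\EE}_{e_1}(\p)} \le \sum_{j=2}^{k}\sqrt{m^{\EE}_{e_j}(\p)}$. Any point of the first octant with the $e_1$-coordinate strictly exceeding $\bigl(\sum_{j\ge 2}\sqrt{\cdot}\bigr)^2$ therefore lies outside $\pi_{S_{\overline{E'}}}(M^{\EE}_{d,G})$, so this projection is a proper subset of the first octant.
\end{proof}
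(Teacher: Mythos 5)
Your proof is correct and follows essentially the same approach as the paper: the ``if'' direction traverses the forest, placing vertices sequentially to hit any nonnegative targets, and the ``only if'' direction finds a point in the first octant that violates a constraint forced by a cycle. The paper's version of the second direction uses the specific unachievable target where all but one cycle edge has zero length, while you use the more general triangle-inequality violation; these are the same idea.
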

\begin{proof}
If $E'$ is a forest, then we given any target measurements point
in the first octant of $\RR^{|E'|}$, we
can traverse the forest and sequentially place the vertices 
in $\RR^d$ to achieve this measurement.

Conversely, 
if $E'$ is not a forest, then it contains a cycle $C$ on $k$ edges,
for some $k$.
In this case, 
$\pi_{S_{\overline{E'}}}(M^{\EE}_{d,G})$
cannot be the entire first
octant of $\RR^k$ since there is no real framework (in any dimension) where all
but one of the edges of the cycle has zero length
\end{proof}

\begin{lemma}
\label{lem:cycle2}
Let $G$ and $H$ be  ordered graphs with $m$ edges.
Suppose $M^{\EE}_{d,G} =M^{\EE}_{d,H}$.
Then the mapping 
taking the ordered edges of $G$ to the ordered edges of $H$
is a cycle isomorphism.
\end{lemma}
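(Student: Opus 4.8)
The plan is to transcribe the $d=1$ argument of Lemma~\ref{lem:cycle} into the Euclidean setting, simply using Lemma~\ref{lem:ind2} in place of Lemma~\ref{lem:ind}. Suppose, for contradiction, that the edge bijection between $G$ and $H$ is not a cycle isomorphism. Since the hypotheses on $G$ and $H$ are symmetric, we may assume without loss of generality that there is a set $C$ of edges that is cycle supported in $G$ (so its $|C|$ edges form a simple cycle there) but whose image under the bijection is not cycle supported in $H$. I would then split into two cases according to how the edges of $C$ sit inside $H$, exactly as in the proof of Lemma~\ref{lem:cycle}.

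First case: the image of $C$ is a forest in $H$. Then Lemma~\ref{lem:ind2} gives that $\pi_{S_{\overline{C}}}(M^{\EE}_{d,H})$ is the entire first octant, while $\pi_{S_{\overline{C}}}(M^{\EE}_{d,G})$ is not, because $C$ is a simple cycle, hence not a forest, in $G$. This contradicts $M^{\EE}_{d,G} = M^{\EE}_{d,H}$, since equal sets have equal images under the coordinate projection $\pi_{S_{\overline{C}}}$.

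Second case: the image of $C$ in $H$ is not a forest, but is also not a simple cycle. A subgraph with exactly $|C|$ edges containing a cycle of length $|C|$ would have to be that very simple cycle, so in fact the cycle it contains has length strictly less than $|C|$; pick an edge $e \in C$ not lying on this shorter cycle. Then $C' := C \setminus \{e\}$ still contains that cycle, so $C'$ is not a forest in $H$; on the other hand, $C'$ is a simple cycle minus one edge in $G$, i.e.\ a path, hence a forest in $G$. Applying Lemma~\ref{lem:ind2} to $C'$ now gives that $\pi_{S_{\overline{C'}}}(M^{\EE}_{d,G})$ is the whole first octant while $\pi_{S_{\overline{C'}}}(M^{\EE}_{d,H})$ is not, again contradicting $M^{\EE}_{d,G} = M^{\EE}_{d,H}$.

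I do not expect a serious obstacle here; the argument is essentially the same as in dimension one, with Lemma~\ref{lem:ind2} doing the work that Lemma~\ref{lem:ind} did there. The only two points needing a little care are the symmetry reduction (so that the failure of the cycle isomorphism property can be taken in the direction $G \to H$, with $C$ a genuine simple cycle in $G$) and the small combinatorial observation in the second case that a non-forest, non-simple-cycle edge set of size $|C|$ must possess an edge avoiding one of its cycles. Both are routine.
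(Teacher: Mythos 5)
Your proof matches the paper's almost line for line: the same WLOG reduction to a simple cycle $C$ in $G$ whose image is not cycle-supported in $H$, the same two cases (forest in $H$ vs.\ neither forest nor simple cycle), and the same application of Lemma~\ref{lem:ind2} in place of Lemma~\ref{lem:ind}. The only difference is that you spell out the small combinatorial step justifying the choice of $e$, which the paper leaves implicit.
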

\begin{proof}
Suppose the mapping  is not a cycle isomorphism. Then wlog
there is
a set of edges  $C$ that from
a simple cycle in $G$ and not $H$.

Suppose that $C$ is a forest in $H$.
Then from Lemma~\ref{lem:ind2}, we have
$\pi_{S_{\overline{C}}}(M^{\EE}_{d,G})
\neq 
\pi_{S_{\overline{C}}}(M^{\EE}_{d,H})$ and thus
$M^{\EE}_{d,G} \neq M^{\EE}_{d,H}$.

Suppose that $C$ is neither a simple cycle nor a forest in $H$,
then there must be an edge $e$ such that $C':=C-e$ is not a forest
in $H$, while $C'$ is a forest in $G$.
Then from Lemma~\ref{lem:ind2}, we have
$\pi_{S_{\overline{C'}}}(M^{\EE}_{d,G})
\neq \pi_{S_{\overline{C'}}}(M^{\EE}_{d,H})$ and thus
$M^{\EE}_{d,G} \neq M^{\EE}_{d,H}$.
\end{proof}

\begin{proof}[Proof of Theorem~\ref{thm:euc}]
The theorem now follows directly from Lemma~\ref{lem:cycle2}, 
the assumed $3$-connectivity, and Theorem~\ref{thm:whit}.
\end{proof}

\section{Remaining Issues}\label{sec:issues}

This paper answers some central questions about the relationships
between graphs and their measurement varieties/sets. There are a few natural remaining questions.

\subsection{Redundant Rigidity}
\label{sec:rr}
Theorem~\ref{thm:main} is tight in the sense that if $G$ is
not generically globally rigid  in $d$ dimensions, then clearly we cannot determine $\p$ from
$\v$. But it is still possible that one might be able to 
determine $G$ from $\v$.
Here we discuss a possible
strengthening of Theorem~\ref{thm:iso}.

\begin{definition}
We say that a graph $G$ is \defn{generically redundantly rigid}
in $\RR^d$ if $G$ is generically locally rigid in $\RR^d$ and remains
so after the removal of any single edge.
\end{definition}

\begin{question}
\label{qu:rr}
Is the following claim true:

Let $G$, an ordered graph with $n \ge d+2$ vertices and  $m$ edges,
be $3$-connected and 
generically redundantly rigid in $d$ dimensions.
Let  $H$ be some ordered graph on $n$ vertices
with $m$ edges.
Suppose $M_{d,G} =M_{d,H}$.
Then
there is a vertex relabeling under which $G=H$.
\end{question}

The claim is true
for $d = 2$, since  in two dimensions,
redundant rigidity and $3$-connectivity imply
generic global  rigidity~\cite{conGR,jj}.

\begin{figure}[htbp]
	\centering
	\subfloat[]{\includegraphics[width=0.340\textwidth]{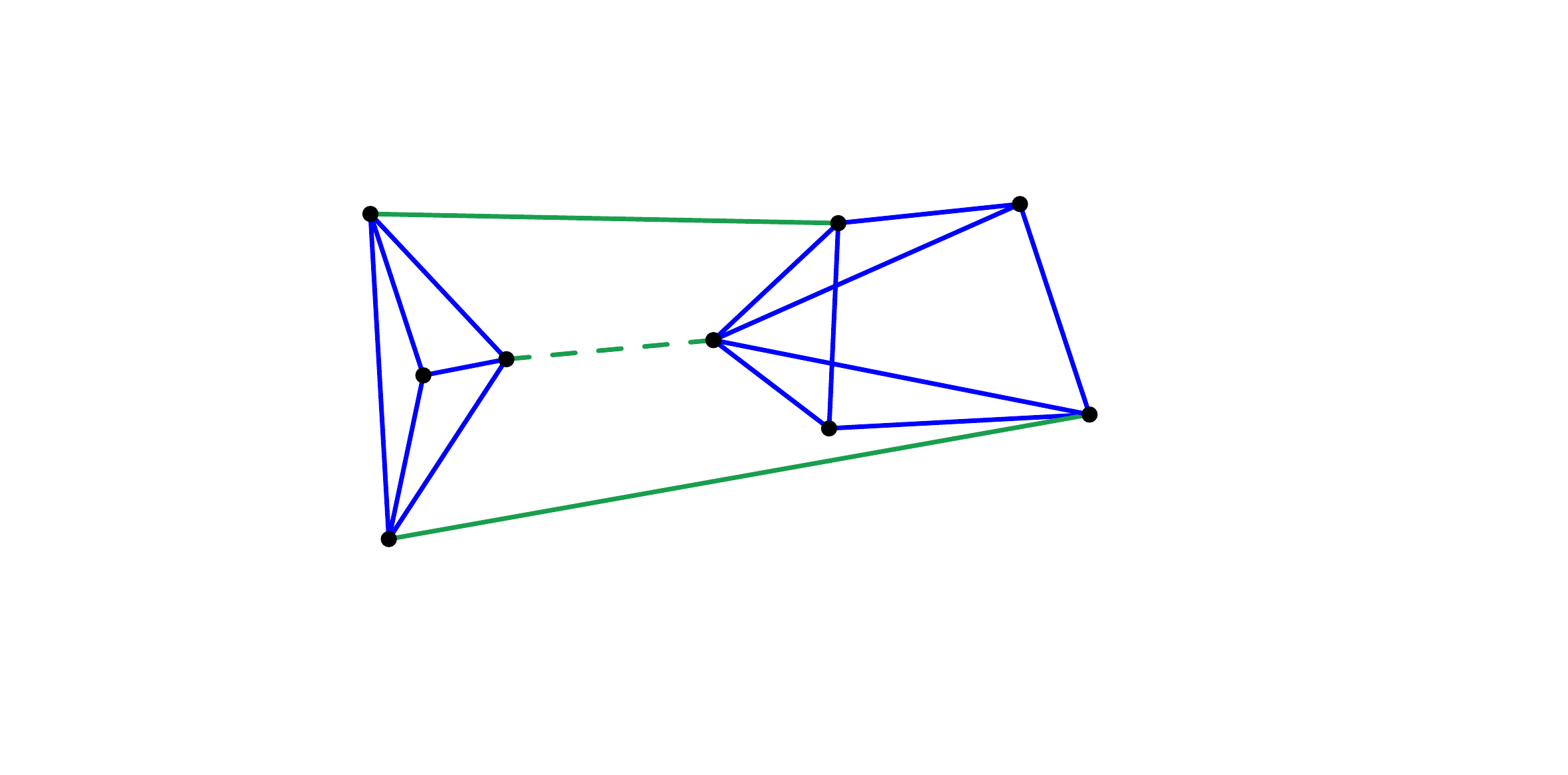}}
    \qquad\qquad
    \subfloat[]{\includegraphics[width=0.340\textwidth]{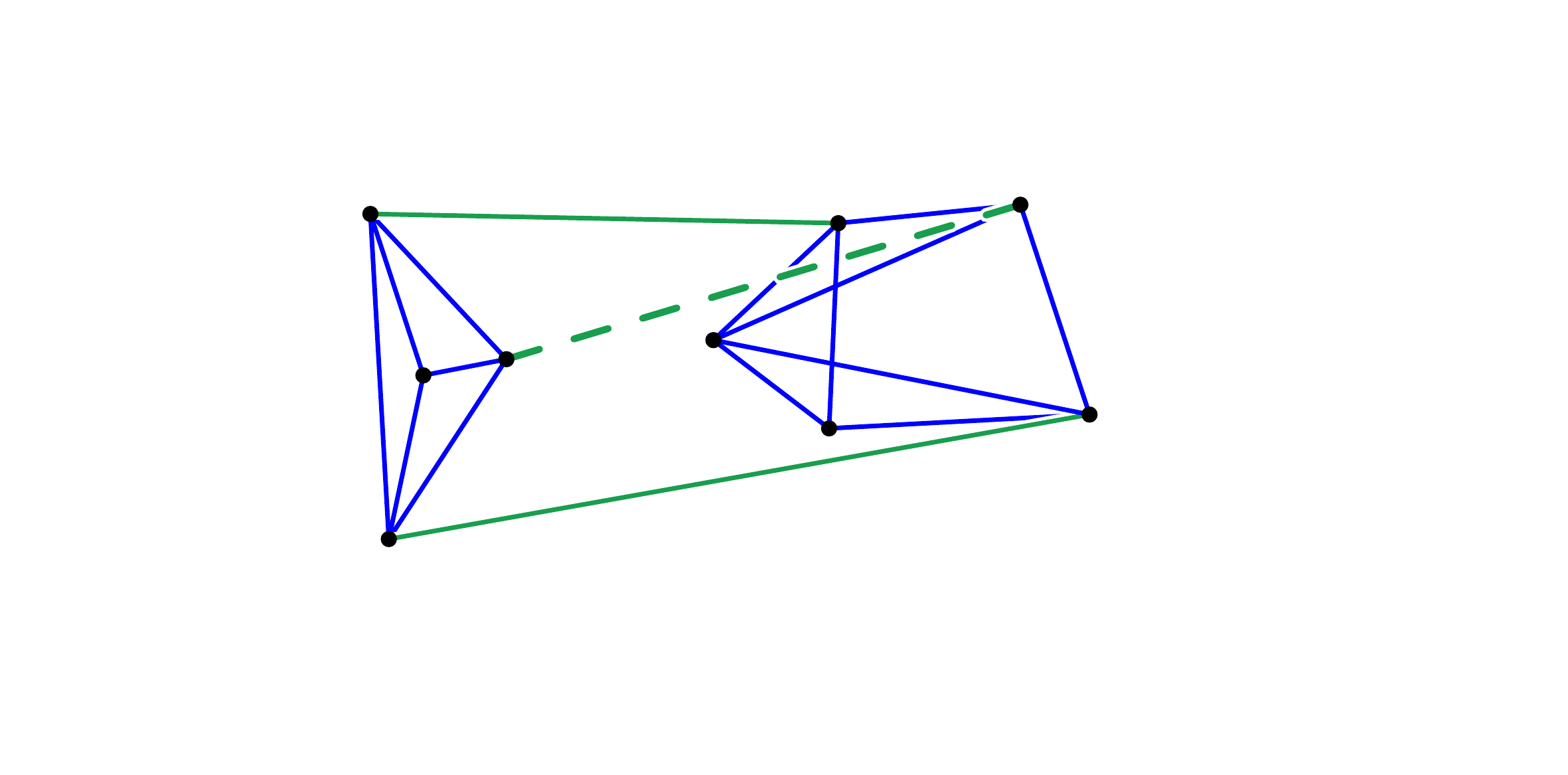}}
    
\caption{A pair of non-isomorphic graphs with the same 
measurement variety.
Any of the green edges in the graphs (a) and (b), if 
removed, result in a graph that is generically 
flexible.  As described in Remark \ref{rem:not-grr}, these 
graphs have the same measurement variety (it is the product 
of the measurement varieties of the complete graph $K_4$,
the wheel $W_4$ and $\CC^3$).  
However, the graphs (a) and (b) are not isomorphic, because the dashed green edges 
can be distinguished from each other by the degree 
of the endpoint on the right.}
\label{fig:not-rr}
\end{figure}

In terms of the the ingredients used for proving
Theorem~\ref{thm:iso}, we note that the conclusion of
Proposition~\ref{prop:mainA} is false when
$G$ is merely generically redundantly rigid.
For example
the complete bipartite graph, $K_{5,5}$, is redundantly rigid
in $3$ dimensions and is $4$-connected. But for any configuration
$\q$ where 
even one of its ``parts'' has a deficient span, there
will be an infinite number of  generic Gauss fibers
with $m_G(\q)$ it their closure. 
This is because  
equilibrium stresses for generic $\p$, which are all
rank $2$, only enforce   
affine relations within each of the parts~\cite{br}.

A positive answer to Question~\ref{qu:rr}
would
directly imply Theorem~\ref{thm:iso} due to
Theorem~\ref{thm:hen} and 
the following theorem of Hendrickson~\cite{hen}.
\begin{theorem}
\label{thm:hen2}
If $G$ is generically globally rigid in $\RR^d$, 
with $n\ge d+2$
then it is redundantly
rigid in $\RR^d$.
\end{theorem}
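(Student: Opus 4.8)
The plan is to deduce this from the stress-matrix characterization of generic global rigidity recalled above (Theorem~\ref{thm:sharedKernel}), rather than reproducing Hendrickson's original flexing argument. First I would dispatch the cheap half of redundant rigidity: a globally rigid framework is in particular locally rigid (a small neighborhood of $\p$ in its fiber consists only of congruent copies of $\p$), so generic global rigidity of $G$ already gives that $G$ is generically locally rigid, hence by Theorem~\ref{thm:glr} generically infinitesimally rigid; in particular the rigidity matrix $R$ of $(G,\p)$ at a generic $\p$ has full rank $dn-\binom{d+1}{2}$. It then remains to show that $G-e$ is generically locally rigid for every edge $e$.

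For the main step, I would fix a generic $\p$ and suppose, toward a contradiction, that $G-e$ is not generically locally rigid. By Theorem~\ref{thm:glr} it is then generically infinitesimally flexible, so the rigidity matrix $R'$ of $(G-e,\p)$, which is exactly $R$ with the row indexed by $e$ deleted, has strictly smaller rank than $R$. Hence the $e$-row of $R$ is not in the span of the remaining rows, and so every equilibrium stress of $(G,\p)$, i.e.\ every left null vector of $R$, must vanish on $e$. On the other hand, since $G$ is generically globally rigid and $n\ge d+2$, Theorem~\ref{thm:sharedKernel} together with Remark~\ref{rem:shared} supplies, at this very $\p$, an equilibrium stress $\omega$ of $(G,\p)$ whose stress matrix $\Omega$ has rank $n-d-1$. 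By the previous sentence $\omega_e=0$, so $\omega$ is just as well an equilibrium stress of $(G-e,\p)$, with the same stress matrix $\Omega$; thus the shared stress kernel of $(G-e,\p)$ has dimension $\le n-(n-d-1)=d+1$, hence exactly $d+1$. Since $\p$ is generic, the second half of Theorem~\ref{thm:sharedKernel} now forces $G-e$ to be generically globally rigid, hence generically locally rigid --- contradicting the assumption. Therefore $G-e$ is generically locally rigid for every $e$, which is precisely what redundant rigidity asks.

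The step I expect to need the most care is the genericity bookkeeping: one must verify that a single generic $\p$ can simultaneously witness the full rank of $R$, the existence of a rank-$(n-d-1)$ stress matrix for $G$ (this is exactly the content of Remark~\ref{rem:shared}, that this stress condition propagates from one generic framework to all of them), and the deficient rank of $R'$ for $G-e$. One should also be careful that Theorem~\ref{thm:sharedKernel} is being invoked in its ``converse'' form for $G-e$: the stated dichotomy says a non-generically-globally-rigid graph has shared stress kernel of dimension $>d+1$ at \emph{every} generic configuration, so exhibiting one generic $\p$ where it equals $d+1$ is enough. If instead one prefers Hendrickson's original route, one would flex $(G-e,\p)$, observe that the squared length of $e$ cannot be constant along the flex (else $(G,\p)$ itself would be flexible, contradicting its local rigidity), and transfer the resulting non-congruent equivalent framework back to a generic configuration using Theorem~\ref{thm:ght}; there the genericity transfer is the genuinely delicate part, which is why I would favor the stress-matrix argument above.
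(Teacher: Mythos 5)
The paper does not give its own proof of this theorem; it is stated as a citation to Hendrickson's 1992 paper, whose original argument is the topological flexing one you allude to at the end. Your stress-matrix proof is correct and is a genuinely different route, deducing Hendrickson's redundant-rigidity condition directly from the Connelly/GHT characterization (Theorem~\ref{thm:sharedKernel}) rather than from a continuity argument along a flex. The key steps all check: if $G-e$ were generically infinitesimally flexible, the rigidity matrix would drop rank when the $e$-row is deleted, forcing every equilibrium stress of $(G,\p)$ to vanish on $e$; then the rank-$(n-d-1)$ stress matrix guaranteed for $(G,\p)$ by the necessary direction of Theorem~\ref{thm:sharedKernel} (propagated to your chosen generic $\p$ by Remark~\ref{rem:shared}) descends to $(G-e,\p)$ and pins its shared stress kernel at dimension exactly $d+1$ (the lower bound $d+1$ coming from the full affine span of generic $\p$, per Definition~\ref{def:stress-kernel}); and the sufficient direction of Theorem~\ref{thm:sharedKernel} then makes $G-e$ generically globally, hence locally, rigid, contradicting the assumption. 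The genericity bookkeeping you flag is indeed benign: all three rank conditions you need are generic properties, so any single generic $\p$ witnesses them simultaneously. What this route buys over Hendrickson's original is exactly what you identify --- it sidesteps the delicate step of showing the flex of $G-e$ must actually move the length of $e$ and return a non-congruent equivalent framework --- at the cost of invoking the heavier machinery of the stress characterization (both directions of it). One cosmetic point worth making explicit in a written version: applying Theorem~\ref{thm:sharedKernel} to $G-e$ uses that $G-e$ still has $n\ge d+2$ vertices (trivially) and, implicitly, that $\p$ retains full affine span relative to $G-e$, which holds since $\p$ is the same generic configuration.
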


\begin{remark}\label{rem:not-grr}
A positive answer to Question~\ref{qu:rr} would give us a reasonably 
tight characterization
of measurement variety agreement in light of the following.

Suppose that $G$ is not generically redundantly rigid, and let
$e$ be an an edge of $G$ so that  $G' := G - e$ is generically locally flexible. From the size of $G$, there must be 
a non-edge $e'$ of $G$ different from $e$
whose lengths can be changed under a continuous  flex of $G'$.
Let $G''$ be the graph obtained from $G$ by replacing $e$ with $e'$.
Then $M_{d,G} = M_{d,G''}$ as both are equal to $M_{d,G'} \oplus \CC^1$.
(See an example in Figure~\ref{fig:not-rr}.)
But the mapping from $G$ to $G''$ will not be a an isomorphism for graphs unless $e$ and $e'$ are in the same orbit of 
$\operatorname{Aut}(G + e')$.  This is a very restrictive 
condition on $G$ that any extension of our results to graphs that are not generically 
redundantly rigid will have to include%
\footnote{Garamvölgyi and Jordán \cite{tibor} explore the question of when a non-redundantly rigid graph can be
reconstructed from edge-length measurements.}%
.
\end{remark}

There are also some more unresolved issues about
measurement sets.

\begin{question}
Can the assumption that $G$ and $H$ have the same
number of vertices be dropped from Theorem~\ref{thm:iso}?
(This open up the possibility that
$H$ has more vertices, but is 
generically locally flexible.)\footnote{Garamvölgyi and Jordán \cite{tibor} give an affirmative answer in dimensions one and two.}
\end{question}

More generally, we know very little about 
what assumptions other than dimension
can let us conclude for general $d$ that 
$M_{d,G} \subseteq M_{d,H}$
implies $M_{d,G}=M_{d,H}$.

\subsection{Unlabeled graph realization}\label{sec: practical}
The \textit{graph realization} (or \textit{distance geometry})
problem asks to reconstruct an unknown configuration $\p$ given 
a graph $G$, dimension $d$, and labeled edge length 
measurements $\v = m^{\EE}_G(\p)$.  From 
Theorem \ref{thm:ght}, if we assume that $\p$ is 
generic and know that $G$ is generically globally
rigid, and we can find any $\q$ at all (not necessarily 
generic) so that $\v = m^\EE_G(\q)$, then we know that
$\q = \p$ (up to congruence).  As a practical matter, 
it is important that $\q$ need not be generic, since 
this is a very strong restriction on (or assumption about)
any specific algorithm (as opposed to the process generating
the input $\p$).

Theorem \ref{thm:main} doesn't immediately 
give us the analogous result for \textit{unlabeled}
distance geometry. The subtlety is that the hypotheses 
of Theorem \ref{thm:main} include knowledge about $G$,
which we won't have access to in an unlabeled distance 
geometry instance.  Unlike in the labelled case, 
$\v$ by itself doesn't immediate tell us whether our
problem is generically well-posed.

Examining our proofs, we get a partial result in 
this direction:
\begin{theorem}\label{thm: practical}
In any fixed dimension $d \ge 2$, 
let $\p$ be a generic configuration of $n \ge d+2$ 
points. Let
$\v= m^{\EE}_G(\p)$, where 
$G$ 
is an ordered graph 
(with $n$ vertices and $m$ edges)
that is generically 
locally rigid in $\RR^d$.

Suppose there is a 
configuration $\q$, 
also of $n$ points,
along with 
an ordered graph $H$
(with $n$ vertices and $m$ edges) that
is generically globally rigid and 
such that 
$\v= m^{\EE}_H(\q)$.

Then
there is a vertex relabeling of $H$ such that
$G=H$.
Moreover, under this vertex relabeling,
up to congruence,
$\q=\p$.
\end{theorem}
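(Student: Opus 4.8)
The plan is to follow the proof of Theorem~\ref{thm:main} almost verbatim, with one change of emphasis: since here it is $H$, rather than $G$, that is assumed generically globally rigid, we apply Theorem~\ref{thm:iso} with the two graphs interchanged. Concretely, I would first establish $M_{d,G} = M_{d,H}$, and then invoke Theorem~\ref{thm:iso} in the form ``$M_{d,H} = M_{d,G}$ together with generic global rigidity of $H$ forces $H = G$ after a vertex relabeling''.

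For the equality of measurement varieties: by hypothesis $G$ is generically locally rigid in $\CC^d$, so Lemma~\ref{lem:infrig} gives $\dim M_{d,G} = dn - \binom{d+1}{2}$; likewise $H$, being generically globally rigid, is generically locally rigid, and since $H$ has exactly $n$ vertices, Lemma~\ref{lem:infrig} gives $\dim M_{d,H} = dn - \binom{d+1}{2}$ as well. Both varieties are irreducible. As in the proof of Theorem~\ref{thm:main}, the real configuration $\p$ is generic as a point of $\CC^{dn}$, so by Lemma~\ref{lem:genMap} the point $\v = m_G(\p)$ is generic in $M_{d,G}$; on the other hand $\v = m_H(\q)\in M_{d,H}$. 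If $M_{d,G}$ were not contained in $M_{d,H}$, then $M_{d,G}\cap M_{d,H}$ would be a proper closed subvariety of $M_{d,G}$ containing $\v$, contradicting the genericity of $\v$ in $M_{d,G}$. Hence $M_{d,G}\subseteq M_{d,H}$, and since both are irreducible of the same dimension, $M_{d,G} = M_{d,H}$.

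Now apply Theorem~\ref{thm:iso} with $H$ playing the role of the generically globally rigid graph; this is legitimate since $d\ge 2$, both graphs have $n\ge d+2$ vertices and $m$ edges, and $M_{d,H} = M_{d,G}$. We obtain a permutation $\tau$ of the vertices so that relabeling $H$ by $\tau$ makes it equal, as an ordered graph, to $G$. Transport $\q$ along $\tau$ to a configuration $\q'$ (so $\q'_{\tau(v)} := \q_v$). Then for each $i$ the $i$-th edge of $G$ is the $\tau$-image of the $i$-th edge of $H$, so $m^{\EE}_G(\q')_i = m^{\EE}_H(\q)_i = \v_i$, i.e.\ $m^{\EE}_G(\q') = \v = m^{\EE}_G(\p)$. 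Since $G$ is isomorphic to $H$, it too is generically globally rigid, and, as $\p$ is generic, $(G,\p)$ is globally rigid; hence $\q'$ is congruent to $\p$, which is the assertion.

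I do not expect a genuinely new obstacle: all the content already lies in Theorems~\ref{thm:iso} and~\ref{thm:main} and Lemma~\ref{lem:infrig}. The only points requiring care are bookkeeping: that Lemma~\ref{lem:infrig} pins down $\dim M_{d,H}$ exactly (which uses that $H$ has no more vertices than $G$ --- here they are equal --- together with generic local rigidity of $H$), and that the relabeling furnished by Theorem~\ref{thm:iso} is compatible with the coordinatewise identification of $M_{d,G}$ with $M_{d,H}$, so that after transporting $\q$ the edge lengths still match $\v$ edge by edge, allowing generic global rigidity of $G$ to be invoked at the end.
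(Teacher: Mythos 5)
Your proposal is correct and follows essentially the same route as the paper's own proof sketch: establish $M_{d,G}=M_{d,H}$ via Lemma~\ref{lem:infrig} and genericity of $\v$, then apply Theorem~\ref{thm:iso} with $H$ in the role of the generically globally rigid graph, and finish by transferring generic global rigidity across the isomorphism to conclude $\q$ is congruent to $\p$. The transport-of-$\q$ bookkeeping you spell out is implicit in the paper's ``the rest of the proof then goes through unchanged.''
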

\begin{proof}[Proof sketch]
The derivation of Theorem \ref{thm:main}
from Theorem \ref{thm:iso} works nearly 
unmodified.  Both $G$ and $H$ are generically
locally rigid by hypothesis
and so 
$M_{d,G}$
and $M_{d,H}$ are of the same dimension.
The configuration $\p$ maps to a 
generic point in the intersection of $M_{d,G}$
and $M_{d,H}$, so the two measurement varieties 
are equal. 
When applying~\ref{thm:iso}, we rely on the generic global rigidity of $H$ instead of $G$.
The rest of the proof then goes through unchanged.
\end{proof}
In this version, we only need to assume that $G$ is
generically locally rigid, instead of generically globally rigid.
Theorem \ref{thm: practical}
then tells us that $H$ (whose generic global
rigidity can be tested in a realization setting)
and $\q$ certify that
the input problem is, in fact, well-posed, and 
that we have found its solution.
This version still makes some assumptions on $G$
and the number of vertices in $H$.

This motivates the following question.




\begin{question}
Is the following claim true:

In any fixed dimension $d \ge 2$, 
let $\p$ be a generic configuration of $n \ge d+2$ 
points. Let
$\v= m^{\EE}_G(\p)$, where 
$G$ 
is an ordered graph 
(with $n$ vertices and $m$ edges).
Let $\p_S$ be the subconfiguration
of $\p$ indexed by the vertices
within the support of $G$.

Suppose there is a 
configuration $\q$, 
of $n'$ points, with no two points coincident,
along with 
an ordered generically globally rigid graph $H$
(with $n'$ vertices and $m'$ edges)
such that 
$\v= m^{\EE}_H(\q)$.

Then 
there is a vertex relabeling  of $\p_S$ such that,
up to congruence,
$\q=\p_S$.
Moreover, under this vertex relabeling,
$G=H$.
\end{question}
A positive answer to this question would mean that,
under the assumption that $\p$ is generic, 
such an $(H,\q)$ would be a certificate that
we have correctly realized the measured 
subconfiguration of $\p$. 

The difficulty for this question is that we do not know how to rule out
the possibility that
$M_{d,G}\subsetneq M_{d,H}$.
This is related to the issues mentioned at the 
end of Section~\ref{sec:rr}.

There is one special case of note. 
The claim of this question is true when 
$H$ is the complete graph $K_{d+2}$. 
(see~\cite[Proposition 4.23]{loops}).
This is due to the fact that,
aside from $K_{d+2}$, any other graph $G$ with
$N:=\binom{d+1}{2}$ distinct
edges has the property that every subset of 
edges is independent.  Hence, 
the measurement variety, $M_{d,G}$, of $G$ must be equal to all of $\CC^N$, 
and so it cannot be a subset of $M_{d,H}$. 
Applying this idea iteratively, it can be
shown that the claim of the question remains true
if $H$ allows for trilateration~\cite{loops}.
This fact allows one to apply trilateration to an unlablled set of measurements, 
as is done in~\cite{dux1}, without any assumptions on  $G$ or $n$.

\subsection{Matrix completion}
A variant of global rigidity is ``matrix completion'',
which asks whether all the entries of an $m\times n$ 
matrix $A$ of (low) rank $r$  can be determined 
by a subset of its entries (at known positions).  
(See \cite{SC09} for complete definitions and background.)

The algebraic setup (see \cite{KTT15})
takes $A$ as a point on the 
determinantal variety of $m\times n$ matrices of 
rank at most $r$, and the observation process is the 
projection onto coordinates corresponding to the 
entries.  The closure of the image of this projection corresponds 
to the measurement variety of a framework.  For complex
matrix completion, a result of \cite{KTT15} says that
whether an observation pattern has a unique completion 
is a generic property.  This means it makes sense to 
ask whether our results also hold in the matrix completion 
setting.

The following rank $3$ examples are from \cite{KTT15v3} (a
preprint version of \cite{KTT15}).
\[
\begin{pmatrix}
\star & \star & \star & \star & \star \\
\star & \star & \star & \star & \star \\
\star & \star & \star & \star & \star \\
\star & \star & \star & ?     & ? \\
\star & \star & \star & ?     & ?  \\
\star & \star & \star & ?     & ?  
\end{pmatrix}
\qquad
\begin{pmatrix}
\star & \star & \star & \star & ? \\
\star & \star & \star & \star & ? \\
\star & \star & ? & \star & \star \\
\star & \star & ? & \star & \star  \\
? & \star & \star & \star & \star  \\
? & \star & \star & \star & \star
\end{pmatrix}
\]
It is shown there that that, for each of these, 
the projection onto the known entries (labeled ``$\star$'')
is dominant.  Hence, they both have the same ``measurement 
varieties''.  Additionally, if the underlying matrix is 
generic, there is exactly one way to fill in the 
unknown entries (labeled ``?''), so they are also 
``globally rigid''.

Importantly, they are \textit{not} related by 
row and column permutations, so we have a 
counter-example to the straightforward translation 
of our main results to the matrix completion setting.
(What goes wrong is that the stress 
criterion for global rigidity isn't necessary 
for matrix completion.  This was first observed in \cite{SC09}.)

On the other hand, as noted in 
\cite[Remark 4.20]{loops}, the matrix
completion analogue of Boutin and Kemper's 
result for complete graphs is straightforward.
Clarifying the relationship between unlabeled
matrix completion and unlabeled rigidity would
be interesting.

\newpage
\appendix

\section{Algebraic Geometry Background}

\begin{definition}
A (complex embedded affine) \defn{variety} 
(or \defn{algebraic set}), $V$,
is a (not necessarily strict)
subset of $\CC^N$, for some $N$,
that is defined by the simultaneous
vanishing of a finite set of polynomial equations 
with coefficients in $\CC$
in the 
variables $x_1, x_2, \ldots, x_N$ which are associated with the 
coordinate axes of $\CC^N$.
We say that $V$ is \defn{defined over} $\QQ$ if it can 
be defined by polynomials with coefficients in $\QQ$.

A variety is homogeneous if its ideal is 
finitely generated by homogeneous polynomials. 
This is the same as the set $V$ being 
a cone with its vertex at $0$.

A variety can be stratified as a union of a finite number of
complex analytic submanifolds of $\CC^N$.
A variety $V$ has a well defined (maximal) \defn{dimension} $\Dim(V)$, 
which will agree with the largest $D$ for which there
is a standard-topology
open subset of~$V$,  
that is a $D$-dimensional complex analytic submanifold of $\CC^N$.

The set of polynomials that vanish on $V$ form 
a radical ideal $I(V)$, which is generated by a finite set
of polynomials.

A variety $V$ is \defn{reducible} if it is the proper union of two
varieties $V_1$ and $V_2$. 
Otherwise it is called
\defn{irreducible}.
A variety has a unique decomposition as a finite proper
union of its
maximal irreducible subvarieties called \defn{components}.

Any (strict) subvariety $W$ of an 
irreducible variety $V$ must be of strictly lower dimension.

A subset $W$ of a variety $V$ is called \defn{Zariski closed}  if $W$ is
a variety.

\end{definition}

\begin{definition}
The \defn{Zariski tangent space} at a point $\x$ of a
variety $V$
is the kernel of the Jacobian matrix of a set of
generating polynomials for $I(V)$ evaluated at $\x$.

A point $\x$ of an irreducible variety $V$
is called
(algebraically) \defn{smooth} in $V$ if the dimension of the Zariski
tangent space equals the  dimension of $V$.
Otherwise $\x$ is called
(algebraically) \defn{singular} in $V$.

A smooth point $\x$ in an irreducible variety
$V$ 
has a standard-topology neighborhood
in $V$ that is a complex analytic submanifold of $\CC^N$ 
of dimension $\Dim(V)$.

The \defn{locus} of singular points of $V$ is denoted $\sing(V)$.
The singular locus is itself a strict subvariety of $V$. 
\end{definition}

\begin{definition}
A \defn{constructible set} $S$ is a set that can be defined using a finite
number of varieties and a finite number of Boolean set operations.
We say that $S$ is \defn{defined over} $\QQ$ if it can 
be defined by polynomials with coefficients in $\QQ$.

$S$ has a well defined (maximal) \defn{dimension} $\Dim(S)$, 
which will agree with the largest $S$ for which there
is a standard-topology 
open subset of~$S$, 
that is a $D$-dimensional complex analytic submanifold of $\CC^N$.

The \defn{Zariski closure} of $S$ is the smallest variety $V$ containing it.
The set $S$ has the same dimension as its Zariski closure $V$.
If $S$ is defined over $\QQ$, then so too is $V$ 
(this can be shown using the fact that $S$ is invariant to 
elements of the absolute Galois group of $\QQ$).

The image of a variety $V$ 
under a polynomial map is a constructible set $S$.
If $V$ is defined over $\QQ$, then so too is 
$S$~\cite[Theorem 1.22]{basu}.
If $V$ is irreducible, then so 
too is the Zariski closure of $S$. (We say that $S$
is \defn{irreducible}.)

\end{definition}

The following can be found in~\cite[Prop 10.1]{milne}.
\begin{lemma}
\label{lem:milne}
An irreducible constructible set $S$ contains a Zariski open subset
of its Zariski closure $V$. Thus $V\setminus S$ is 
contained in a subvariety $W$ of $V$. If $S$ is defined over $\QQ$,
there is such a $W$ that is as well.
\end{lemma}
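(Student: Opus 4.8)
The plan is to recall the standard locally-closed description of constructible sets and combine it with irreducibility. First I would write $S$ as a finite union $S = \bigcup_{i=1}^{k} (U_i \cap Z_i)$ of locally closed sets, where each $U_i$ is Zariski open in $\CC^N$ and each $Z_i$ is a subvariety; this is just the definition of ``constructible'' unwound through the allowed Boolean operations. Taking Zariski closures, and using that closure commutes with finite unions, gives $V = \overline{S} = \bigcup_{i=1}^{k} \overline{U_i \cap Z_i}$, where each $\overline{U_i \cap Z_i}$ is a subvariety of $V$ since $U_i\cap Z_i \subseteq S \subseteq V$. Because $S$ is irreducible, $V$ is irreducible, and an irreducible variety cannot be written as a finite union of proper subvarieties; hence $V = \overline{U_i \cap Z_i}$ for some index $i$, which I fix.

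The next step is to extract an honest Zariski-open subset of $V$ lying inside $S$. Since $Z_i$ is closed and $U_i \cap Z_i \subseteq Z_i$, the equality $V = \overline{U_i \cap Z_i}$ forces $V \subseteq Z_i$; combined with $U_i \cap Z_i \subseteq S \subseteq V$ this yields $U_i \cap Z_i = U_i \cap V$. The set $U_i \cap V$ is Zariski open in $V$, is nonempty (its closure is $V \ne \emptyset$), and is contained in $S$, which proves the first assertion. For the second, put $Z := \CC^N \setminus U_i$ and $W_0 := V \cap Z$, a subvariety of $V$; then $V \setminus S \subseteq V \setminus (U_i \cap V) = W_0$, and $W_0 \subsetneq V$ because $U_i \cap V \ne \emptyset$.

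For the rational refinement I would not try to track fields through the argument above. Instead take $W := \overline{V \setminus S}$, the Zariski closure of the constructible set $V \setminus S$. By the facts already recorded in this appendix, if $S$ is defined over $\QQ$ then so is $V$, hence so is $V \setminus S$ (a Boolean combination of things defined over $\QQ$), and hence so is its Zariski closure $W$. Trivially $V \setminus S \subseteq W$, and since $V \setminus S \subseteq W_0$ with $W_0$ closed we get $W \subseteq W_0 \subsetneq V$, so $W$ is a proper subvariety of $V$ defined over $\QQ$ with the required property.

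The whole argument is essentially bookkeeping; the one spot that requires a moment's care is the implication ``$V = \overline{U_i \cap Z_i}$ implies $U_i \cap Z_i = U_i \cap V$'', where it is tempting but unjustified to conclude $Z_i = V$ — a priori $Z_i$ need not lie in $V$, and one obtains only $V \subseteq Z_i$, which is exactly what the remainder of the step uses. Beyond that I expect no real obstacle: the statement is a soft consequence of the finite locally-closed decomposition of a constructible set together with the irreducibility of $V$.
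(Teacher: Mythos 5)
The paper does not give a proof of this lemma; it simply cites \cite[Prop 10.1]{milne}. Your argument is a correct, self-contained derivation along the standard lines. The reduction of a constructible set to a finite union of locally closed pieces $U_i \cap Z_i$, taking closures, invoking irreducibility of $V$ to isolate a single piece with $\overline{U_i \cap Z_i} = V$, and then extracting $U_i \cap V$ as the desired nonempty open subset of $V$ contained in $S$ is exactly the textbook proof; you are also right to flag that one only gets $V \subseteq Z_i$ (not $Z_i = V$), and that this is all that is needed since $U_i\cap Z_i \subseteq V$ is already known. The handling of the rational refinement is clean: rather than trying to choose the locally closed decomposition over $\QQ$, you set $W := \overline{V\setminus S}$ and lean on the facts (already recorded in this appendix) that the Zariski closure of a $\QQ$-defined constructible set is $\QQ$-defined and that Boolean combinations of $\QQ$-defined sets are $\QQ$-defined, while the containment $W \subseteq W_0 \subsetneq V$ gives properness. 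In short, where the paper defers to a reference, you have supplied the expected proof; there is no gap.
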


\begin{definition}
A point in an irreducible variety 
or constructible set,
$V$ defined over $\QQ$ is called 
\defn{generic} if its coordinates do not satisfy any algebraic equation 
with coefficients in $\QQ$ besides those that are satisfied by every point
in $V$.  

The set of generic points has full measure 
in $V$.

When $V$ is an irreducible variety
and defined over $\QQ$, 
all of its generic
points are smooth.

A generic real configuration in $\RR^d$ 
(as in Definition~\ref{def:genConig})
is also a generic point in $\CC^N$,
considered as a variety, as in the current definition.
\end{definition}

\begin{lemma}
\label{lem:dense}
Let $V \subseteq W$ be an inclusion of varieties where $W$ 
and $V$ are 
irreducible and $W$ is 
defined over $\QQ$. Suppose that $V$ 
has at least
one point $\y$ which is generic in $W$ (over $\QQ$). Then the points in
$V$ which are generic in $W$ are 
Zariski dense in $V$.
\end{lemma}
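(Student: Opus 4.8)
The plan is to reduce the statement to the standard fact that an irreducible complex variety cannot be written as a countable union of proper subvarieties. Writing $\QQ[\x] := \QQ[x_1,\dots,x_N]$, let $I_\QQ(V)$ and $I_\QQ(W)$ denote the ideals of polynomials over $\QQ$ vanishing on $V$ and on $W$, respectively. First I would observe that $I_\QQ(V) = I_\QQ(W)$: since $V \subseteq W$ we have $I_\QQ(W) \subseteq I_\QQ(V)$, and conversely any $g \in I_\QQ(V)$ vanishes at $\y$, so genericity of $\y$ in $W$ forces $g \in I_\QQ(W)$. This is the only place the hypothesis on $\y$ is used, and its consequence is that no polynomial $f \in \QQ[\x] \setminus I_\QQ(W)$ vanishes identically on $V$; equivalently, for every such $f$ the set $V \cap \{f = 0\}$ is a proper Zariski-closed subvariety of $V$.

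Next I would describe the non-generic locus of $W$ over $\QQ$. By definition a point $\p \in W$ fails to be generic in $W$ precisely when some $f \in \QQ[\x] \setminus I_\QQ(W)$ vanishes at $\p$, i.e.\ when $\p \in Z_f := W \cap \{f = 0\}$. Since $\QQ[\x]$ is countable, the non-generic locus of $W$ is the countable union $\bigcup_f Z_f$, and the set of points of $V$ that are generic in $W$ is exactly $U := V \setminus \bigcup_f Z_f$. By the first paragraph, each $V \cap Z_f$ is a proper subvariety of $V$.

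Finally, suppose for contradiction that $U$ is not Zariski dense in $V$, so $\overline{U} \subsetneq V$. Then $V = \overline{U} \cup \bigcup_f (V \cap Z_f)$ writes the irreducible variety $V$ as a countable union of proper subvarieties, which is impossible over $\CC$: intersecting with generic hyperplanes reduces to the case of an irreducible curve, whose proper subvarieties are finite sets, and an irreducible complex curve has uncountably many points while a countable union of finite sets is countable. (This last fact could also simply be cited.) The one subtlety to watch is that $V$ is not assumed to be defined over $\QQ$, so one cannot short-circuit the argument by concluding $V = W$ from $I_\QQ(V) = I_\QQ(W)$; carrying the argument out at the level of the $\QQ$-ideals, rather than the varieties, is what makes it work, and I expect verifying the countable-union fact cleanly is the only nonroutine point.
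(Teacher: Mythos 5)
Your proof is correct and follows essentially the same strategy as the paper's: the locus of points of $V$ that are generic in $W$ is the complement of a countable union of sets $V \cap \{f=0\}$ (over $\QQ$-polynomials $f$ not vanishing on $W$), the genericity of $\y$ forces each of these to be a proper subvariety of $V$ --- and your reformulation via the ideal equality $I_\QQ(V) = I_\QQ(W)$ packages this step nicely --- and one concludes from the fact that an irreducible complex variety cannot be covered by countably many proper Zariski-closed subsets. The only divergence is in how that last fact is justified: the paper applies the Baire category theorem to the smooth locus of $V$ in the analytic topology, while you sketch a hyperplane-cutting-to-curves cardinality argument. Both are standard, but the Baire (or Lebesgue-measure-zero) route sidesteps the mild subtlety in the hyperplane approach that one must choose a single generic linear section that simultaneously cuts all countably many subvarieties properly, which is itself a manifestation of the phenomenon being proved; as you note, citing the fact outright is also perfectly reasonable here.
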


\begin{proof}
  Let~$\phi$ be a non-zero algebraic function on~$W$ defined over $\QQ$.
  Consider the Zariski open subset
set $X_\phi :=
  \{\,\x \in V \mid \phi(\x) \ne 0\,\}$.  
This is non empty due to our assumption about the point 
$\y$.
Thus, from the irreducibility of $V$, 
this $X_\phi$ is Zariski dense in 
$V$. 

The set of points in
$V$ which are generic in $W$ is defined as the intersection of these open and dense 
$X_\phi$ as $\phi$ ranges over the countable set of possible $\phi$.

When $U$ is any Zariski open and dense subset of $V$,
then $V\setminus U$ is contained in a strict subvariety of $V$.
From irreducibility and dimension considerations then, $U$  must
contain a standard-topology open and dense subset of
the smooth locus of $V$~\footnote{
In fact, using ~\cite[Theorem 1, Page 58]{mumford},
we can see that $U$ is
standard-topology open and dense in all of $V$.}.

As the smooth locus of $V$ under the standard topology is a Baire space,
a countable intersection of such subsets is 
standard-topology dense in the smooth locus of $V$.
Thus, again from irreducibility and 
dimension considerations, this intersection is Zariski dense in all of $V$.
\end{proof}

\begin{lemma}
\label{lem:genMap}
Let $V$ be an irreducible variety and $f$ a 
polynomial map, 
both 
defined over
$\QQ$. 
Then the image of a generic point in $V$ is generic
in $f(V)$.
\end{lemma}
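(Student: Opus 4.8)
The plan is to argue contrapositively at the level of defining ideals. Suppose $\x$ is generic in $V$ and, for contradiction, that $\f(\x)$ fails to be generic in $f(V)$. By the definition of genericity, this means there is a polynomial $\psi$ with coefficients in $\QQ$ that vanishes at $\f(\x)$ but does not vanish identically on $f(V)$. The key observation is that the pullback $\psi \circ f$ is again a polynomial with coefficients in $\QQ$ (since both $\psi$ and the components of $f$ have rational coefficients), and it is defined on all of $V$. Now $(\psi \circ f)(\x) = \psi(\f(\x)) = 0$, so $\x$ satisfies the rational polynomial relation $\psi \circ f = 0$. Since $\x$ is generic in $V$, this forces $\psi \circ f$ to vanish identically on $V$, i.e., $\psi \circ f \in I(V)$.

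The remaining step is to deduce from $\psi \circ f \equiv 0$ on $V$ that $\psi \equiv 0$ on $f(V)$, which contradicts our choice of $\psi$. This is immediate: every point of $f(V)$ has the form $f(\y)$ for some $\y \in V$, and $\psi(f(\y)) = (\psi \circ f)(\y) = 0$. Hence $\psi$ vanishes on $f(V)$, and therefore on its Zariski closure as well, contradicting the assumption that $\psi$ does not vanish identically on $f(V)$. This completes the argument; no irreducibility or smoothness input is actually needed for the statement as phrased, only that $V$ and $f$ are defined over $\QQ$ so that pullbacks of rational polynomials remain rational.

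I do not anticipate a genuine obstacle here — the entire content is the bookkeeping that "pullback of a $\QQ$-polynomial along a $\QQ$-morphism is a $\QQ$-polynomial," together with the tautology that $\psi$ vanishing on $f(V)$ is equivalent to $\psi \circ f$ vanishing on $V$. The one point worth stating carefully is the direction of the implication: genericity of $\x$ is used to promote "$\psi \circ f$ vanishes at the single point $\x$" to "$\psi \circ f$ vanishes on all of $V$," and it is this promotion (rather than any geometric property of the image) that does the work.
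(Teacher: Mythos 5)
Your proof is correct. The paper states Lemma~\ref{lem:genMap} without proof, evidently regarding it as immediate, and your pullback argument is exactly the natural one: given a $\QQ$-polynomial $\psi$ with $\psi(f(\x))=0$, the composite $\psi\circ f$ is again a $\QQ$-polynomial vanishing at the generic point $\x$, hence it lies in $I(V)$, hence $\psi$ vanishes on $f(V)$ and a fortiori on its Zariski closure. The only small overstatement is your closing aside that irreducibility plays no role: it is needed for the paper's notion of a \emph{generic point} of $V$ (and of the constructible set $f(V)$) to be well-posed in the first place, even though, as you observe, the pullback step itself never invokes it.
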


\begin{lemma}
\label{lem:preG}
Let $V$ be a
 irreducible variety, $f$ be a polynomial map $f:V \rightarrow \CC^m$ 
all defined over $\QQ$.
Let $W := f(V)$.
If $\y$  is generic in $W$, there is a point in $f^{-1}(\y)$ that is generic in V.
\end{lemma}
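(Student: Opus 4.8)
The statement to prove is Lemma~\ref{lem:preG}: for an irreducible variety $V$ and polynomial map $f\colon V\to\CC^m$, both over $\QQ$, with $W:=f(V)$, if $\y$ is generic in $W$ then some point of $f^{-1}(\y)$ is generic in $V$.

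The plan is to argue by a counting/genericity argument using the countably many $\QQ$-polynomials that could witness non-genericity. First I would recall from the earlier discussion (Lemma~\ref{lem:milne} and the definition of generic) that $W=\overline{f(V)}$ is irreducible and defined over $\QQ$, and that the image $f(V)$ is a constructible set containing a Zariski-dense open subset of $W$. Now suppose, for contradiction, that \emph{every} point of $f^{-1}(\y)$ fails to be generic in $V$. For each such point $\x$, there is a nonzero $\QQ$-polynomial $\phi$ vanishing at $\x$ but not identically on $V$. The fiber $f^{-1}(\y)$ need not itself be defined over $\QQ$ (since $\y$ is only a $\QQ$-generic point, not a $\QQ$-point), so the cleanest route is to work with a $\QQ$-generic fiber uniformly rather than point by point.

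Concretely, the main step I would carry out: consider the countable collection $\{\phi_k\}$ of all nonzero polynomials over $\QQ$ on $V$ (up to scaling), and for each $k$ let $Z_k := V \cap \{\phi_k = 0\}$, a proper subvariety of $V$ (proper since $\phi_k$ is not identically zero on irreducible $V$). A point of $V$ is generic iff it lies outside $\bigcup_k Z_k$. Since $f$ and each $Z_k$ are over $\QQ$, each image $\overline{f(Z_k)}$ is a constructible set over $\QQ$ (by the Chevalley-type statement cited in the appendix), and it is contained in $W$. I claim $\overline{f(Z_k)}$ is a \emph{proper} subvariety of $W$ for those $k$ that matter; but this is not automatic — $f$ could map a proper subvariety onto all of $W$. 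So the real argument must instead go: if no point of $f^{-1}(\y)$ were generic in $V$, then $f^{-1}(\y) \subseteq \bigcup_k Z_k$, hence $\y \in \bigcup_k \overline{f(Z_k)}$ — wait, this inclusion only gives $\y \in f(\bigcup_k Z_k) = \bigcup_k f(Z_k)$, so $\y \in f(Z_k)$ for some single $k$. If for that $k$ we had $\overline{f(Z_k)} \subsetneq W$, then a defining equation of $\overline{f(Z_k)}$, pulled back over $\QQ$, would witness $\y$ as non-generic in $W$ — contradiction. So the whole lemma reduces to: \textbf{there exists a $k$ (in fact the argument must handle all relevant $k$ at once) such that $\y\in f(Z_k)$ forces $\overline{f(Z_k)}=W$}, which cannot happen by dimension.

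Let me restate the clean version, which is what I would actually write. The generic point $\y$ lies in the Zariski-dense open subset $U\subseteq f(V)$ of $W$ guaranteed by Lemma~\ref{lem:milne}; shrinking, we may take $U$ over $\QQ$. Partition the index set: call $k$ \emph{bad} if $\overline{f(Z_k)}=W$ and \emph{good} otherwise. For good $k$, $\overline{f(Z_k)}$ is a proper $\QQ$-subvariety of $W$, so its union over the countably many good $k$ still has all its points non-generic in $W$; since $\y$ is generic, $\y\notin \overline{f(Z_k)}$ for every good $k$, hence $f^{-1}(\y)$ meets no good $Z_k$. Thus $f^{-1}(\y)\subseteq \bigcup_{k \text{ bad}} Z_k$. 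Now here is the crux: for a bad $k$, the restriction $f|_{Z_k}\colon Z_k\to W$ is dominant, and $Z_k$ is a $\QQ$-variety (possibly reducible — replace by an irreducible component $Z_k'$ over $\overline\QQ$ that dominates $W$, or argue componentwise). I would invoke Lemma~\ref{lem:genMap}/\ref{lem:dense}-style reasoning at one level down — but that is circular, since Lemma~\ref{lem:preG} is the preimage statement. The honest resolution is instead an induction on $\dim V$: if $f$ is generically finite, the fiber $f^{-1}(\y)$ over a generic $\y$ is a finite set of points each of which is a $\overline\QQ$-point whose field of definition, together with $\y$, has transcendence degree $\dim W=\dim V$ over $\QQ$, so each such point is generic in $V$ — giving the base case directly. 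In general, factor $f$ as $V \xrightarrow{g} V' \xrightarrow{h} W$ with $g$ having generic fiber of positive dimension $e=\dim V-\dim W$... this is getting heavy. \textbf{The main obstacle}, and the heart of the proof, is exactly this: controlling that the generic fiber of $f$ genuinely contains a point whose coordinates are transcendentally independent over $\QQ$ to the right degree — equivalently, that the function field extension $\QQ(V)/\QQ(W)$ (after the $\QQ$-generic specialization of $W$ to $\y$) remains ``as transcendental as possible.'' I expect to handle this by choosing a transcendence basis of $\QQ(V)$ containing a transcendence basis of $\QQ(W)$ (pulled back via $f^*$), interpreting the remaining coordinates as giving a generic point of the generic fiber, and then specializing $\QQ(W)\to\y$; a nonzero $\QQ$-relation among the coordinates of such a specialized fiber point would, by clearing the specialization, produce a nonzero $\QQ(W)$-relation on $V$ and hence (clearing denominators) a nonzero $\QQ$-relation on $W$ violated by $\y$ — the contradiction. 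So the real content is the standard fact that a $\QQ$-generic point of $W$ lifts to a $\QQ$-generic point of $V$ along a dominant $\QQ$-morphism, which is what I would cite or prove via these transcendence-degree bookkeeping steps.
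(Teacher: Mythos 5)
You correctly identify the central obstacle — that $f$ may carry a proper subvariety $Z_k = \{\phi_k = 0\}$ dominantly onto all of $W$, so $\overline{f(Z_k)} = W$ tells you nothing about genericity of $\y$ — but you do not overcome it. Your fallback via transcendence-degree bookkeeping is the right idea in spirit but is not carried out: the factorization $V\to V'\to W$ is abandoned mid-sentence, the claim that for generically finite $f$ the fiber points are $\overline{\QQ}$-points is a slip (they are algebraic over $\QQ(\y)$, not over $\QQ$), and the concluding ``standard fact'' you propose to cite — that a $\QQ$-generic point of $W$ lifts to a $\QQ$-generic point of $V$ along a dominant $\QQ$-morphism — is precisely the lemma you are asked to prove, so invoking it is circular. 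As written, the proof has a real gap.

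The paper sidesteps the obstacle entirely by flipping to complements. Rather than asking whether $\y$ avoids the image of a zero set $Z_\phi$ (which can fail when $f(Z_\phi)$ is dense), it works with $X_\phi := \{\x \in V : \phi(\x)\ne 0\}$. Irreducibility of $V$ makes $X_\phi$ dense in $V$, so $f(X_\phi)$ is dense in $W$, and then Lemma~\ref{lem:milne} (Chevalley) shows $W\setminus f(X_\phi)$ lies in a proper $\QQ$-subvariety $T$; genericity of $\y$ forces $\y\notin T$, hence $\y\in f(X_\phi)$, i.e., $f^{-1}(\y)$ meets $\{\phi\ne 0\}$. Thus each $Z_\phi := f^{-1}(\y)\cap\{\phi=0\}$ is proper in $f^{-1}(\y)$; finite unions stay proper (take the product of the $\phi_i$); and a countable union of proper algebraic subsets cannot cover $f^{-1}(\y)$ unless some finite sub-union already does. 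No dimension induction or function-field bookkeeping is needed — density of $X_\phi$ does all the work, and it never encounters a ``bad case.''
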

\begin{proof}
  Let~$\phi$ be a non-zero algebraic function on~$V$ defined over $\QQ$.
  We start by showing there is a point $\x \in f^{-1}(\y)$ so that
  $\phi(\x) \ne 0$.  
Consider the constructible set $X_\phi :=
  \{\,\x \in V \mid \phi(\x) \ne 0\,\}$.  
  This is Zariski dense in~$V$ due
to irreducibility,
so its
  image $f(X_\phi)$ is Zariksi dense in~$W\!$. 
  Therefore, from Lemma~\ref{lem:milne},
  $Y_\phi := W
  \setminus f(X_\phi)$ 
  is contained in  some proper subvariety $T$ of $W$ defined
  over $\QQ$.
  
  But then since $\y$ is generic 
  it cannot be in $T$, so 
  $\y$ is in the image of $X_\phi$, so there is an $\x \in
  f^{-1}(\y)$ such that $\phi(\x) \ne 0$, as desired.

  Let $Z_\phi = \{\,\x\in f^{-1}(\y) \mid \phi(\x) = 0\,\}$.  We have
  shown $Z_\phi$ is a proper subset of $f^{-1}(\y)$ for any non-zero
  algebraic function~$\phi$ on $V$,
  defined over $\QQ$. It follows that for any finite
  collection of $\phi_i$, the union of the $Z_{\phi_i}$ is still a
  proper subset of $f^{-1}(\y)$ (as we can consider the
  product of the $\phi_i$).  But there are only countably many
  possible $\phi$ overall, and a countable union of algebraic subsets
  covers an algebraic set iff some finite collection of them do.
  (Proof: this is true for each irreducible component, as a proper
  algebraic subset has measure zero, and there are only finitely many
  irreducible components.)  Thus
  the union of the $Z_\phi$ do not cover $f^{-1}(\y)$, i.e., there is
  a generic point in $f^{-1}(\y)$.
\end{proof}

\newpage

\end{document}